\documentclass[12pt, reqno]{amsart}

\usepackage[usenames,dvipsnames]{xcolor}
\usepackage{amssymb, amsmath, amsthm}
\usepackage[backref, colorlinks=true, citecolor=Green, urlcolor=blue]{hyperref}
\usepackage[alphabetic,backrefs,lite]{amsrefs}
\usepackage{amscd}   % for commutative diagrams
\usepackage{fullpage}
\usepackage{tikz-cd}
\usepackage{colonequals}
\usepackage{MnSymbol}
\usepackage{multirow}
\usepackage{enumitem}

\DeclareFontEncoding{OT2}{}{} % to enable usage of cyrillic fonts

\newtheorem{lemma}{Lemma}[section]
\newtheorem{theorem}[lemma]{Theorem}

\newtheorem{prop}[lemma]{Proposition}
\newtheorem{cor}[lemma]{Corollary}
\newtheorem{conj}[lemma]{Conjecture}

\newtheorem{claim*}{Claim}
\newtheorem*{theorem*}{Theorem}
\newtheorem{thm}[lemma]{Theorem}

\theoremstyle{remark}
\newtheorem{remark}[lemma]{Remark}
\newtheorem{remarks}[lemma]{Remarks}

\newtheorem{example}[lemma]{Example}
\newtheorem{examples}[lemma]{Examples}
\newtheorem{defn}[lemma]{Definition}
\newtheorem{definition}[lemma]{Definition}

\newcommand{\A}{{\mathbb A}}

\newcommand{\PP}{{\mathbb P}}
\newcommand{\C}{{\mathbb C}}
\newcommand{\F}{{\mathbb F}}
\newcommand{\Q}{{\mathbb Q}}

\newcommand{\Z}{{\mathbb Z}}

\newcommand{\kbar}{{\overline{k}}}

\newcommand{\kk}{{\mathbf k}}

\newcommand{\calC}{{\mathcal C}}

\newcommand{\calE}{{\mathcal E}}

\newcommand{\calI}{{\mathcal I}}

\newcommand{\calO}{{\mathcal O}}

\newcommand{\calT}{{\mathcal T}}

\newcommand{\calV}{{\mathcal V}}

\newcommand{\OO}{{\mathcal O}}

\newcommand{\frakm}{{\mathfrak m}}

\newcommand{\frakp}{{\mathfrak p}}

\newcommand{\mm}{{\mathfrak m}}
\newcommand{\be}{{\mathbf{e}}}

\newcommand{\bff}{{\mathbf{f}}}
\newcommand{\bw}{{\mathbf{w}}}

\usepackage[mathscr]{euscript}

\newcommand{\scrS}{{\mathscr S}}

\DeclareMathOperator{\HH}{H}

\DeclareMathOperator{\lcm}{lcm}

\DeclareMathOperator{\rk}{rk}

\DeclareMathOperator{\Aut}{Aut}

\DeclareMathOperator{\ord}{ord}
\DeclareMathOperator{\Sym}{Sym}

\DeclareMathOperator{\Pic}{Pic}

\DeclareMathOperator{\Spec}{Spec}

\DeclareMathOperator{\Frac}{Frac}

\DeclareMathOperator{\tors}{tors}

\DeclareMathOperator{\rank}{rank}

\DeclareMathOperator{\Disc}{Disc}

\DeclareMathOperator{\cond}{cond}

\DeclareMathOperator{\Supp}{Supp}
\DeclareMathOperator{\spp}{\mathsf{P}}
\DeclareMathOperator{\Diag}{Diag}
\DeclareMathOperator{\indx}{index}
\DeclareMathOperator{\pr}{pr}
\DeclareMathOperator{\mat}{Mat}

\numberwithin{equation}{section}
\numberwithin{table}{section}

\newcommand{\defi}[1]{\textsf{#1}} % for defined terms

\title{Likely and unlikely ramification of algebraic points on curves}

\author{Bianca Viray}
\address{University of Washington, Department of Mathematics, Box 354350, Seattle, WA 98195, USA}
\email{bviray@uw.edu}
\urladdr{http://math.washington.edu/\~{}bviray}

\author{Isabel Vogt}
\address{Brown University, Department of Mathematics, Box 1917, 151 Thayer Street, Providence, RI 02912, USA}
\email{ivogt.math@gmail.com}
\urladdr{https://www.math.brown.edu/ivogt/}
\thanks{BV was partially supported by NSF DMS-2401327. IV was partially supported by NSF DMS-2338345.}

\begin{document}
%%%%%%%%%%%%%%%%%%%%%%%%%%%%%%%%%%%%%%%%%%%%%%%%%%%%%%%%%%%%%%%%%%%%%%%%%%%%%%%%%%%%%%%%%%%%%%%%%%%%%%%%%%
%%%%%%%%%%%%%%%%%%%%%%%%%%%%%%%%%%%%%%%%%%%%%%%%%%%%%%%%%%%%%%%%%%%%%%%%%%%%%%%%%%%%%%%%%%%%%%%%%%%%%%%%%%
\begin{abstract}
    This paper initiates the study of arithmetic properties of closed points  of arbitrary degree on curves. We show that a pointless curve of  rank \(0\) can have only finitely many cyclic degree \(\ell\) points for \emph{all} sufficiently large primes \(\ell\). More generally, we show that the residue fields of points on curves of genus at least \(2\) and rank \(0\) are often biased away from total ramification. The proof technique illuminates a natural intermediate conjecture between the Manin--Mumford conjecture and the Zilber--Pink conjecture for self-products of abelian varieties. This intermediate conjecture implies analogous results for other splitting profiles.
\end{abstract}
\maketitle

\section{Introduction}

    A central focus in arithmetic geometry is understanding how the geometry of a variety influences its arithmetic. For example, the Mordell conjecture (proved by Faltings) asserts that curves of genus at least \(2\) can have only finitely many points defined over any fixed number field \(k\). In other words, for curves of genus at least \(2\), the map
    \begin{equation}\label{eq:PointsToResidueFields}
    \{x \in C : x \textup{ closed}\} \to \{L/k : [L:k] <\infty\}, \quad x \mapsto \kk(x)
    \end{equation}
    sending a point \(x\) to its residue field \(\kk(x)\) is finite-to-1. (Equivalently, this map sends a Galois orbit of geometric points to the isomorphism class of a minimal field of definition of a point in the orbit.) 
    One can also ask about the image of the map, and the philosophy that geometry controls arithmetic suggests that for a curve of genus at least \(2\) the image of~\eqref{eq:PointsToResidueFields} should be constrained.
    
    We explore this question and discover that, for many curves, large ramification degrees are unlikely to appear in residue fields of closed points. For example, we prove the following.

\begin{theorem}\label{thm:IntroTotallyRamified}
    Let \(C/k\) be a nice curve whose Jacobian has rank \(0\). Assume that there 
    is no geometric point \(x \in C(\bar{k})\) with \(\indx(C/k) x\) linearly equivalent to a \(k\)-rational divisor.
    Then there exists an integer \({m}\) such that for every \(d\) coprime to \(m\), total ramification is \textbf{unlikely} for degree \(d\cdot \textup{index}(C/k)\) points on \(C\).
    Precisely, the set of places
    \[
        \{v\in \Omega_k : \exists x\in C, \deg(x) = d\cdot\textup{index}(C/k), \kk(x)/k \textup{ totally ramified at } v\}
    \]
    is finite.
\end{theorem}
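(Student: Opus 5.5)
Write \(I = \indx(C/k)\). The plan is to turn total ramification into a statement about a special geometric point, and then use the finiteness of \(J(k)\) to get a contradiction for places of good reduction with large residue characteristic.

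First, let \(x\) be a closed point of degree \(dI\) whose residue field \(L = \kk(x)\) is totally ramified at a place \(v\). Let \(w\) be the unique place of \(L\) above \(v\), and take a finite place \(v\) of good reduction for \(C\) and for \(J = \operatorname{Jac}(C)\), with residue field \(\F_v\) of characteristic \(p\). The point \(x\) extends to an \(\calO_{L_w}\)-point of the smooth model \(\mathcal C\). Its reduction is a single \(\F_v\)-point \(\tilde x \in \mathcal C(\F_v)\), because the residue field of \(L_w\) equals \(\F_v\). Hence all \(dI\) geometric points of \(x\) reduce to the same point \(\tilde x\).

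Second, pass to the Jacobian. Since \(I\) is the index, there is a \(k\)-rational divisor \(D_0\) of degree \(I\). Consider the class
\[
P_x = [x] - d D_0 \in J(k).
\]
Because \(J(k)\) is finite, \(P_x\) lies in the finite torsion group \(J(k)_{\tors}\), whose exponent we call \(n\). Choose a geometric point \(y\) in the Galois orbit \(x\). The idea is to compare \([x]\) with \(dI\cdot y\): their reductions agree, since \([x]\) reduces to \(dI\,\tilde x\). So
\[
Q = dI\, y - [x] \in J(\bar k)
\]
reduces to \(0\) modulo the prime above \(v\). It therefore lies in the formal group (kernel of reduction) \(\hat J(\frakm)\) over \(L_w\).

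Third, show that \(Q\) must be torsion or zero in a controlled way, and get the contradiction. Here I would take \(m\) divisible by \(n\) and by the relevant exponents so that \(d\) is invertible modulo the torsion. Applying \(\mathrm{Tr}\) (summing over Galois conjugates of \(y\)) gives \(\sum_\sigma Q^\sigma = dI[x] - dI[x] = 0\), which is not informative on its own. Instead I would use the hypothesis that no \(I y\) is linearly equivalent to a \(k\)-rational divisor. The set
\[
S = \{\, I y - D \;:\; y \in C(\bar k),\ D \text{ a } k\text{-rational divisor of degree } I \,\}
\]
consists of nonzero classes modulo \(J(k)\). The key input is Manin--Mumford (Raynaud): for a curve of genus at least \(2\) embedded in \(J\), only finitely many \(y\) have \(I y - D_0\) torsion. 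Let \(T\) be this finite set of \(y\), and let \(m\) be the product of the orders of the corresponding torsion classes with \(n\) and \(I\). When \(\gcd(d,m)=1\), the class \(d(Iy - D_0) = Q + P_x\) lies in \(J(k)_{\tors}+\hat J(\frakm)\).

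For \(p\) large relative to \(e(w/v) = dI\), the formal group \(\hat J(\frakm_w)\) is torsion-free. Hence the image of \(d(Iy-D_0)\) in \(J(L_w)/\hat J\) is torsion, which forces \(Iy - D_0\) itself to be congruent to a torsion point modulo \(w\).

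The main obstacle is making this uniform in \(d\). The ramification index \(dI\) grows, so the torsion-freeness of the formal group (which needs roughly \(e < p-1\)) fails for small \(p\) and large \(d\), and the argument above only handles places with \(p\) large relative to \(d\). To deal with this I would instead reduce the geometric points directly. Over the finitely many bad places and small primes we need a separate bound. For \(p\) large (independent of \(d\)), I would argue that total ramification forces \(I\tilde y \sim \widetilde{D}\) on the special fibre with \(\tilde y\) an \(\F_v\)-point, uniformly in \(d\). I would then use that reduction is injective on the prime-to-\(p\) torsion, together with Manin--Mumford finiteness, to lift this back to \(I y' \sim D\) for some \(\bar k\)-point \(y'\). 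This contradicts the hypothesis and leaves only finitely many exceptional places. I expect this lifting step — getting the contradiction without a bound on the ramification \(dI\) — to be the hardest part. There I would first try working in the unramified quotient via the tame part of the inertia action on the Tate module.
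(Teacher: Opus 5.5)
Your first step and your choice of \(m\) are the right ingredients. Total ramification at a good place \(v\) makes the reduction of \(x\) equal to \(dI\tilde x\) for a single \(\tilde x\in C(\F_v)\). Taking \(m\) divisible by \(\#\Pic^0(C)\) and by the orders of the finitely many torsion classes \(Iy-D_0\) (Manin--Mumford) guarantees that, for \(\gcd(d,m)=1\) and every \(k\)-rational \(D\) of degree \(I\), no \(y\in C(\bar k)\) has \(d(Iy-D)=0\). But you never turn these facts into finiteness of the set of places, and the steps you propose instead do not work:
\begin{itemize}
\item The formal-group step is vacuous. Every element of \(J(\F_v)\) is torsion, so ``\(Iy-D_0\) is congruent to a torsion point modulo \(w\)'' carries no information.
\item Total ramification does not give \(I\tilde x\sim\tilde D\) on the special fibre. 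It gives only \(d(I\tilde x-\tilde D)=0\) in \(J(\F_v)\), where \([x]=d[D]\); such a \(D\) exists because \(d\) is prime to \(\#\Pic^0(C)\). Since \(J(\F_v)\) may have \(d\)-torsion, you cannot drop the factor \(d\).
\item Injectivity of reduction on prime-to-\(p\) torsion runs in the wrong direction. It cannot produce a \(\bar k\)-point \(y'\) with \(Iy'\sim D\) from an \(\F_v\)-point, because for every \(\tilde y\in C(\bar\F_v)\) the class \(I\tilde y-\tilde D\) is already torsion.
\end{itemize}

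Moreover, the uniformity in \(d\) that you call the main obstacle is not part of the statement, and it is false in general. By Theorem~\ref{thm:Reconstruction}, if \(\tilde y\in C(\F_v)\) satisfies \(d(I\tilde y-\tilde D)=0\) and \(dI\ge 2g+1\), then \(v\) is totally ramified in the residue field of some degree-\(dI\) point. So if, say, \(\#J(\F_v)\) is prime to \(m\) for infinitely many \(v\), no single finite set of places works for all \(d\) prime to \(m\).

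The missing idea is a spreading-out argument for each fixed \(d\). Let \(\mathcal{Y}_{d,D}\subset\calC\) be the preimage of the section \([dD]\) under \(\calC\to\Pic^{dI}_{\calC/\calO_{k,S}}\), \(y\mapsto dIy\). It is of finite type over \(\calO_{k,S}\), and its generic fibre is empty by the choice of \(m\). Hence its fibres over all but finitely many \(v\) are empty. Since \(\tilde x\in\mathcal{Y}_{d,D}(\F_v)\), and there are only finitely many classes \([D]\in\Pic^I(C)\), the set of places is finite.

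This is exactly the paper's route: Lemma~\ref{lem:SplProfileImpliesFvPoints}, Lemma~\ref{lem:SymIntersectionEmptyImpliesUnlikely} and Proposition~\ref{prop:consequenceMM}, assembled in Theorem~\ref{thm:unlikely_splitting_rank1}. It needs no formal groups and no bound on \(p\) in terms of \(d\).
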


The hypotheses of Theorem~\ref{thm:IntroTotallyRamified} are sharp; see Remarks~\ref{rmks:Thm1}\ref{rmksthm1_4}. If we further restrict to points on \(C\) of \emph{prime} degree, Theorem~\ref{thm:IntroTotallyRamified} has the following consequence for cyclic points.

\begin{cor}\label{cor:FinitelyManyCyclicIntro}
    Let \(C/k\) be a nice curve whose Jacobian has rank \(0\). If \(C(k) = \emptyset\), then for all sufficiently large primes \(\ell\), there are finitely many cyclic degree \(\ell\) points on \(C\).
\end{cor}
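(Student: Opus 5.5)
We deduce the corollary from Theorem~\ref{thm:IntroTotallyRamified}, applied with \(d = \ell\) prime, together with Mordell's theorem (Faltings) and a ramification argument for cyclic extensions. The plan has four steps: verify the hypotheses of the theorem, choose \(\ell\) large, use the finiteness of the set of places, and finish with Hermite--Minkowski and Faltings.

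First we handle the index and the hypotheses. Let \(I = \indx(C/k)\). We may assume \(g(C)\ge 1\); if \(g = 0\), a pointless conic has index \(2\), so it has no odd-degree points and the claim holds for odd \(\ell\). A cyclic point of degree \(\ell\) has degree divisible by \(I\), so for \(\ell\) large and prime this forces \(I \in \{1,\ell\}\). Since \(I\) is fixed, taking \(\ell > I\) reduces us to \(I = 1\), or else there are no degree-\(\ell\) points at all.

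Next we check the hypothesis of Theorem~\ref{thm:IntroTotallyRamified} in the case \(I=1\). We need that no geometric point \(x\) is linearly equivalent to a \(k\)-rational divisor. Suppose \(x \sim D\) with \(D\) a \(k\)-rational divisor of degree \(1\). If \(g \ge 1\), then \(h^0(D)=1\) for degree-\(1\) divisors (Riemann--Roch/Abel–Jacobi injectivity). Hence \(x\) is the unique effective divisor in its class, and that class is Galois-stable. So \(x\) is Galois-fixed, i.e.\ \(x \in C(k)\), contradicting \(C(k)=\emptyset\). With the hypothesis verified, the theorem gives an integer \(m\); we take \(\ell\) prime with \(\ell\nmid m\) and \(\ell > I\).

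Then we show that the relevant cyclic fields are unramified outside a fixed finite set. Let \(S\) be the finite set of places from the theorem with \(d=\ell\), enlarged by the archimedean places and the places above \(\ell\). Let \(x\) be a cyclic point of degree \(\ell\), so \(\kk(x)/k\) is Galois with group \(\Z/\ell\). Since the degree is prime, inertia at any place \(v\) is either trivial or the whole group. If it is the whole group, \(\kk(x)/k\) is totally ramified at \(v\), forcing \(v \in S\). Hence \(\kk(x)\) is unramified outside \(S\).

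Finally, by Hermite--Minkowski there are only finitely many degree-\(\ell\) extensions of \(k\) unramified outside \(S\). Faltings' theorem applies to each such field provided \(g\ge 2\), and gives finitely many \(\kk(x)\)-points, hence finitely many cyclic degree-\(\ell\) points. The case \(g=1\) is separate: rank \(0\) is over \(k\), and a genus-one curve of index \(1\) with \(C(k)=\emptyset\) is still a torsor, so over each of the finitely many fields \(L\) one needs \(C(L)\) finite. This is the main obstacle, since rank can jump over \(L\). I would try to handle it by arguing that a degree-\(\ell\) point on a genus-one curve of index \(1\) forces \(C(k)\ne\emptyset\) via Riemann--Roch. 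Indeed \(x - (\ell-1)D\) is a degree-\(1\) rational class, and hence is effective, which is a contradiction. So the genus-one index-one pointless case is in fact vacuous, and genus one presents no problem.
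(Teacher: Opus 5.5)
Your proof is correct and follows the paper's argument: reduce to index \(1\), apply Theorem~\ref{thm:IntroTotallyRamified} to primes \(\ell\nmid m\), use that a ramified place of a cyclic extension of prime degree is totally ramified, and finish with Faltings. The only real difference is the finiteness step for the residue fields. You use Hermite--Minkowski to get finitely many candidate fields. The paper instead puts them all inside a single ray class field of explicit modulus, which is why it restricts to \(\ell\) unramified in \(k\) to bound the conductor exponent above \(\ell\). Your treatment of genus \(\le 1\) and your check of the theorem's hypothesis spell out what the paper leaves implicit.
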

\begin{proof}
    The conclusion holds vacuously if \(\indx(C/k) >1 \), since then there are no points of large prime degree.  We may therefore assume that \(\indx(C/k) = 1\).
    It suffices to consider only those primes \(\ell\) that are unramified in \(k\).
    The assumptions of the Corollary mean that we may apply Theorem~\ref{thm:IntroTotallyRamified} to conclude that for all primes \(\ell\) coprime to \(m\) (in particular, all sufficiently large primes), total ramification for degree \(\ell\) points can only occur at a finite set of places \(\calT_{\ell}\). In a cyclic extension of prime degree, any ramified prime must be totally ramified. Thus, if \(\kk(x)\) is cyclic of degree \(\ell>2\) it must be contained in the ray class field of modulus \(\prod_{v\in \calT_\ell} v^{n_v}\), where \(n_v = 1\) if \(v\nmid \ell\) and \(n_v = 2\) if \(v|\ell\) (using the hypothesis that \(\ell\) is unramified in \(k\)). This ray class field is a number field, so by Faltings's (Curve) Theorem there are finitely many points of \(C\) defined over this ray class field. 
\end{proof}

\begin{remarks}\label{rmks:Thm1}\hfill
    \begin{enumerate}[label=(\roman*)]
        \item By definition of the index of the curve, if \(d\not \equiv 0 \pmod{\indx(C)}\), then \(C\) has \emph{no} closed points of degree \(d\). For example, if \(C(\mathbb{R})=\emptyset\), then the index of \(C\) must be even and \(C\) has no closed points of odd degree.
        \item If \(\textup{genus}(C)\leq 1\) , then every degree \(d\) divisor is linearly equivalent to \(dp\), for a geometric point \(p \in C(\bar{k})\).  Hence, the assumptions of Theorem~\ref{thm:IntroTotallyRamified} force \(\textup{genus}(C)\geq 2\).
        \item For \(\PP^1\), the map~\eqref{eq:PointsToResidueFields} is surjective. Since any curve \(C\) has some map \(f\colon C \to \PP^1\), this implies that for any number field \(L\) and any curve \(C\), there is some field \(F\) in the image of~\eqref{eq:PointsToResidueFields} that contains \(L\).
        \item\label{rmksthm1_4} 
        If there exists a geometric point \(x \in C_{\bar{k}}\) such that \(\indx(C/k)x\) is linearly equivalent to a \(k\)-rational divisor, then for all \(d \geq (2g(C)+1)/\indx(C)\),
        the set of places in Theorem~\ref{thm:IntroTotallyRamified} has positive density (see Corollary~\ref{cor:Thm1.1Sharp}).
        In particular, the hypotheses of Theorem~\ref{thm:IntroTotallyRamified} for curves whose Jacobian has rank \(0\) are sharp in the strongest possible sense. 
        \item 
For many curves satisfying the assumptions of Theorem~\ref{thm:IntroTotallyRamified}, e.g., curve \href{https://www.lmfdb.org/Genus2Curve/Q/7676/a/7676/1}{7676.a.7676.1}, 
we can show that the theorem holds with \(m = 2\), so odd total ramification is \emph{always} unlikely. (In particular, for these curves, ``sufficiently large primes \(\ell\)'' in Corollary~\ref{cor:FinitelyManyCyclicIntro} means \(\ell\) odd.) For small odd degrees (\(e = 3, 5\)), we can compute the finite set of places of good reduction where degree \(e\) points can be totally ramified; see Table~\ref{table:genus2}. 
    \end{enumerate}
\end{remarks}

We obtain results analogous to Theorem~\ref{thm:IntroTotallyRamified} for other ramification indices.
In order to precisely state our results in this direction, we introduce some notation.

\begin{definition}\hfill
\begin{enumerate}[label=(\roman*)]
\item A \defi{splitting profile} \(\spp\) is a finite multiset of ordered pairs
\(\spp = \{(e_1, f_1), \dots, (e_r, f_r)\}\).  Define \(r(\spp)\colonequals |\spp|\) and \(\deg(\spp) \colonequals \sum_{i=1}^r e_if_i\).  From a splitting profile \(\spp\), we can make two length \(r(\spp)\) ordered tuples, well-defined up to simultaneous permutation:
\[\be(\spp) \colonequals (e_1, \dots, e_r), \qquad \bff(\spp) \colonequals (f_1, \dots, f_r).\]
\item A {splitting profile} \(\spp\) is \defi{unramified} if \(\be(\spp) = (1, \dots, 1)\), and \defi{ramified} otherwise.
\item Given a finite extension \(L/k\) and a nonarchimedean place \(v\) of \(k\), the ramification and residual degrees of the places lying over \(v\) give a splitting profile of degree \([L:k]\):
\[
    \spp_v(L/k) \colonequals\left\{ (e(L_w/k_v), [\F_w:\F_v]) : w|v\right\}.
\]
\item A splitting profile \defi{\(\spp\) is likely in \(V\)}, a set of effective divisors on \(C\), if 
\[
    \# \left\{v : \exists x\in V \textup{ integral, with }\spp_v(\kk(x)/k) = \spp
    \right\} = \infty,
\]
and \defi{unlikely} otherwise.
We say that \defi{\(\spp\) is likely/unlikely on \(C\)} when \(V\) is taken to be a set that contains all closed points of degree equal to \(\deg(\spp)\).
\end{enumerate}
\end{definition}

\begin{remarks}\label{rmks:Likely}\hfill
\begin{enumerate}[label=(\roman*)]
    \item If \(\spp\) is an unramified splitting profile and there exists a point \(x\in C\) and a place \(v\) such that \(\spp_v(\kk(x)/k) = \spp\), then \(\spp\) is likely in any set containing \(x\) by the Chebotarev density theorem.
    \item If the set \(\{\kk(x) : x \in V \textup{ integral}\}\) is finite and \(\spp\) is a ramified splitting profile, then \(\spp\) is unlikely in \(V\). 
    \item Since \(k\) is dense in \(\prod_{v\in S} k_v\) for any finite set of places \(S\), there exist monic irreducible polynomials over \(k\) that have prescribed \(v\)-adic factorization at any finite set of places \(v\). Therefore, given a splitting profile \(\spp\) and a finite set of places \(S\), one can construct a number field \(L/k\) with \(\spp_v(L/k) = \spp\) for all \(v\in S\).  For example, an extension totally ramified at all places in \(S\) can be obtained by adjoining a root of a polynomial \(f(x)\) that is Eisenstein at \(v\) for all \(v\in S\).
    Since every number field occurs as the residue field of a closed point on \(\PP^1\), every splitting profile is likely on \(\PP^1\). \label{part:likelyP1}
    \item Likely splitting profiles in complete linear systems tend to propagate under taking multiples and repetitions. For example, if an unramified splitting profile \(\spp^0\) is likely in a linear system \(|D|\), then sufficiently large multiples \(d\spp^0 = \{{(d,f) : f\in \bff(\spp^0)}\}\) are likely in \(|dD|\) (Corollary~\ref{cor:MultipleOfRealizedUnramIsRealized}); see Section~\ref{sec:LikelyExamples} for more details. 
    \end{enumerate}
\end{remarks}

 \begin{theorem}\label{thm:IntroRank1Splitting}
    Let \(C/k\) be a nice curve whose Jacobian has rank \(0\), and let \(\spp^0 = \{(e_i, f_i)\}_i\) be a splitting profile such that \(\indx(C)|e_if_i\) for all \(i\) and such that \(\frac{\deg(\spp^0)}{\indx(C)}\) is relatively prime to \(\#J_C(k)\).
    If \(C\) has no points of degree at most \(\deg(\spp^0)\), then there exists an integer \(m = m(C, \spp^0)\) such that for every integer \(d\) coprime to \(m\), the splitting profile \(\spp = d\spp^0 =  \{ (de_i, f_i)\}_i\) is \textbf{unlikely} on \(C\).
\end{theorem}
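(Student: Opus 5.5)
Our plan is to follow the approach behind Theorem~\ref{thm:IntroTotallyRamified}: turn the local ramification data at \(v\) into a global statement about the divisor class of \(x\) in \(J_C(k)\), and then use the rank \(0\) hypothesis to contradict it for all but finitely many \(v\). Let \(x\) be a closed point of degree \(\deg(\spp) = d\deg(\spp^0)\) with \(\spp_v(\kk(x)/k) = d\spp^0\). We take \(v\) outside a finite set \(S\) containing the archimedean places, the places of bad reduction of a fixed regular proper model \(\mathcal C/\calO_{k,S}\), and the places where \(J_C(k)\) fails to inject into \(J(\F_v)\). We also require \(v\nmid m\), where \(m\) is a multiple of \(\#J_C(k)\) to be enlarged as needed.

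The first step is local. Over \(k_v\), the divisor \(x\) splits as \(\sum_i x_i\), where \(x_i\) has residue field of ramification index \(de_i\) and residual degree \(f_i\). Specialize each \(x_i\) to the special fiber \(\mathcal C_{\F_v}\). A totally ramified piece of ramification index \(de_i\) and residue degree \(f_i\) reduces to \(de_i\) times a closed point \(\bar y_i\) of degree \(f_i\). So the reduction is \(\bar x = d\sum_i e_i \bar y_i = d\,\bar D_0\), where \(\bar D_0\) is effective of degree \(\deg(\spp^0)\) over \(\F_v\). Since \(\indx(C)\mid e_if_i\), fix a \(k\)-rational divisor \(E\) of degree \(\indx(C)\) and set \(n = \deg(\spp^0)/\indx(C)\). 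Then \([x - dnE]\in J_C(k)\) reduces to \(d[\bar D_0 - n\bar E]\in J(\F_v)\).

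Next we use the rank \(0\) hypothesis. Because \(d\) is coprime to \(m\) and hence to \(\#J_C(k)\), the element \([x-dnE]\) equals \(d\cdot P\) for a unique \(P\in J_C(k)\), and \(P\) ranges over a fixed finite set. Reduction is injective on \(J_C(k)\). Comparing, \(d\bigl([\bar D_0 - n\bar E] - \bar P\bigr) = 0\) in \(J(\F_v)\). This alone is not a contradiction: we need to pin down \([\bar D_0 - n\bar E]\) as the reduction of a global class, which would force an effective degree-\(\deg(\spp^0)\) divisor class over \(k\).

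The main obstacle is this lifting step: showing that for all but finitely many \(v\), the class \(\bar P + n\bar E\) is not effective on \(\mathcal C_{\F_v}\) in a way compatible with the splitting shape, while \(d\)-torsion in \(J(\F_v)\) could be nontrivial. Our plan is to use the intermediate Manin--Mumford/Zilber--Pink-type input developed in the paper, in the unconditional form available in this setting. The finitely many global divisor classes \(P + nE\) of degree \(\deg(\spp^0)\) are not effective over \(k\), since \(C\) has no points of degree at most \(\deg(\spp^0)\). Hence each corresponds to a geometric divisor class \(\bar k\)-effective only by non-rational divisors, or not effective at all. In the non-effective case, \(W_{\deg\spp^0}\) misses the finitely many translates, and spreading out shows it misses their reductions for almost all \(v\). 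To handle the twist by \(d\)-torsion uniformly in \(d\), we apply Manin--Mumford (Raynaud) to the curve \(C\) and the subvariety \(W\) inside \(J\). The torsion points \(t\) with \(P+nE+t \in W_{\deg\spp^0}\) that reduce appropriately lie in finitely many torsion cosets of abelian subvarieties. We choose \(m\) to kill the orders of the finitely many torsion points arising, which excludes those \(d\). For \(d\) coprime to \(m\), \(d\)-torsion must then act trivially. We expect the genuinely delicate part to be transferring Raynaud's geometric finiteness to reductions at \(v\) uniformly in \(d\). We would try the prime-to-\(v\) injectivity of torsion under reduction, using \(v\nmid d\) after enlarging \(S\) to depend on \(d\) only through finitely many primes.
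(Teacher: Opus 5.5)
Your reduction to the special fiber and your Manin--Mumford input are the paper's ingredients (Lemma~\ref{lem:SplProfileImpliesFvPoints}, and Proposition~\ref{prop:consequenceMM} inside Theorem~\ref{thm:unlikely_splitting_rank1}). The gap is the last step, which you leave open as the ``delicate part'' and misframe as needing uniformity in \(d\).

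\emph{Uniformity is not needed.} Unlikeliness is a property of the single profile \(d\spp^0\). For each fixed \(d\) coprime to \(m\) you only need finitely many bad places, and that finite set may depend on \(d\). None should be expected uniformly: the exceptional sets in Table~\ref{table:genus2} change with \(d\), and the paper leaves open whether \(\calT_d(C)\) can be arbitrarily large.

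\emph{The fixed-\(d\) step is routine.} It is the same spreading-out you already use for \(D'\) itself. For each of the finitely many classes \(D'=nE+P\in\Pic^{\deg(\spp^0)}(C)\), let \(\calV_{d,D'}\subset\Sym^{\bff}_{\calC}\) be the preimage of the section \(dD'\) under \(y\mapsto d\,[\be^0](y)\). It is proper over \(\OO_{k,S}\). Its generic fiber is empty, since a \(\bar{k}\)-point would give a point of \(\bigl([\be^0](W^{\bff}_C)-D'\bigr)\cap J_C[d]\), which Manin--Mumford rules out for \(d\) coprime to \(m\). Hence its image in \(\Spec\OO_{k,S}\) is a finite set of closed points. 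Your \(\F_v\)-point \(\bigl(\tfrac{f_i}{\deg\bar y_i}\bar y_i\bigr)_i\) (in general \(\deg\bar y_i\) only divides \(f_i\)) lies in \(\calV_{d,D'}(\F_v)\), so \(v\) is in that finite set. This is Lemma~\ref{lem:SymIntersectionEmptyImpliesUnlikely}.

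\emph{Your proposed tool does not close the gap by itself.} For \(v\nmid d\) of good reduction, \(J_C[d]\) is \'etale at \(v\), so \(\bar t\) lifts to some \(t\in J_C[d](\bar{k})\). But that only says the reduction of \(D'+t\) is effective. Effectivity can appear under specialization (\(h^0\) is only upper semicontinuous), so you would still have to spread out each of the \(d^{2g}\) classes \(D'+t\) separately, again with an exceptional set depending on \(d\). Also, no single finite \(S\) can contain every \(v\mid d\) for all \(d\) coprime to \(m\).

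Two smaller corrections:
\begin{enumerate}[label=(\roman*)]
\item The case ``\(\bar{k}\)-effective only by non-rational divisors'' cannot occur. Since \(d\) acts bijectively on the finite group \(\Pic^0(C)\), we have \(P\in\Pic^0(C)\), so \(D'\) is the class of a \(k\)-rational divisor and \(h^0(C_{\bar{k}},D')=h^0(C,D')\). This is exactly what gives \(0\notin[\be^0](W^{\bff}_C)-D'\) geometrically, as Manin--Mumford requires.
\item Raynaud's cosets \(\tau_i+B_i\) can be positive-dimensional. So \(m\) must be divisible by \(\ord_{J_C/B_i}(\tau_i)\ge 2\), as in Lemma~\ref{lem:TorsionTranslate}, not by orders of individual torsion points.
\end{enumerate}

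With these repairs your route is a valid proof. Working directly with the classes in \(\Pic^{\deg(\spp^0)}(C)\), rather than \(n\) times classes of degree \(\indx(C)\) as the paper does, even avoids the hypothesis that \(\deg(\spp^0)/\indx(C)\) is coprime to \(\#J_C(k)\).
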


\begin{remark}
    Theorem~\ref{thm:IntroTotallyRamified} can be rephrased as saying if \(\spp^0 := \{(\indx(C), 1)\}\) is unlikely, then arbitrarily large multiples \(d \spp^0 \colonequals \{(de,f) : (e,f)\in \spp^0\}\) are also unlikely. A similar rephrasing of Theorem~\ref{thm:IntroRank1Splitting} is possible, but one instead must assume that \(\spp^0\) and all of its refinements are unlikely. See Section~\ref{sec:LikelyPositiveDensity} for more details.
\end{remark}

A key input to the proofs of Theorems~\ref{thm:IntroTotallyRamified} and \ref{thm:IntroRank1Splitting} is the Manin--Mumford conjecture (proved by Raynaud). We obtain similar statements for splitting profiles where the ramification indices are allowed to vary over a group of rank at least \(2\), conditional on a generalization of the Manin--Mumford conjecture (also a specialization of the Zilber--Pink conjecture). 

\begin{theorem}\label{thm:IntroUnlikelySplitting}
    Let \(C/k\) be a nice curve whose Jacobian has rank \(0\) and is geometrically simple. Let \(\spp^0 = \{(e_i^0, f_i)\}\) be a splitting profile. Assume:
    \begin{itemize}
        \item the Zilber--Pink conjecture (or the weaker Conjecture~\ref{conj:GeneralizedMM}),
        \item \(\sum_i f_i < g(C)\),
        \item \(\indx(C) \mid e_i^0f_i\) for all \(i\),
        \item \(\min_i(e_i^0f_i)/\indx(C)\) is relatively prime to \(\#J_C(k)\), and
        \item there are no points of degree at most \(\min_i(e_i^0f_i)\) on \(C\).
    \end{itemize}
    Then, for all \(\be \in \prod_i e_i^0\Z_{>0}\) outside of a proper finite union of subgroups,  the splitting profile \(\mathsf{P} = \{(e_i,f_i)\}_i\) is \textbf{unlikely} on \(C\).
\end{theorem}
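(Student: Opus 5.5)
Here is the plan I would follow. Suppose \(x\) is a closed point of degree \(\deg(\spp)=\sum_i e_if_i\) and \(v\) is a place of good reduction, outside a fixed finite set \(S\), with \(\spp_v(\kk(x)/k)=\spp\). I would then extract a rational relation in the Jacobian. The idea is to reduce modulo \(v\). Over \(\F_v\), the reduction of the divisor \(x\) has the form \(\sum_i e_i y_i\), where \(y_i\) is a closed point of \(C_{\F_v}\) of degree \(f_i\). Since \(J_C(k)\) is finite and \(\#J_C(k)\) is coprime to the primes dividing the residue characteristics outside \(S\), reduction \(J_C(k)\to J_C(\F_v)\) is injective. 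The class of \(x\) minus a fixed \(k\)-rational divisor of the same degree (a multiple of \(\indx(C)\)-divisors) therefore lies in the finite group \(J_C(k)\), and this persists after reduction.

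Next I would lift the picture to geometric points. Pick geometric points \(\tilde y_i\in C(\bar\F_v)\) in the support of \(y_i\). Summing over Galois conjugates, the relation says that a certain \(\Z\)-linear combination \(\sum_i e_i\,\mathrm{Tr}(\tilde y_i)\) equals a fixed rational class up to torsion of bounded order. I would then argue in characteristic \(0\) by a Hensel/specialization step. Total ramification of type \(e_i\) over \(v\) means that locally at \(v\) the point \(x\) is a \(k_v\)-point of \(\Sym^{\sum e_if_i}C\) lying on the image of the map
\[
\prod_i \Sym^{f_i}C \to \Sym^{\deg\spp}C,\qquad (D_i)\mapsto \textstyle\sum e_iD_i,
\]
modulo \(\frakm_v\). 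Composing with Abel--Jacobi, I get the subvariety \(W=\{\sum_i e_i\,\mathrm{AJ}(D_i)\}\) of the \(r\)-fold product \(J^r\), intersected with the "graph" condition \(\sum_i e_iP_i=c\) for \(c\in J_C(k)\). The condition \(\sum f_i<g\) ensures that \(W_{f_1}\times\cdots\times W_{f_r}\subset J^{r}\) is a proper subvariety. The relation then says that the point \((P_1,\dots,P_r)\) lies on the intersection of \(W_{f_1}\times\cdots\times W_{f_r}\) with the algebraic subgroup \(H_{\be}=\{\sum e_iP_i\in J_C(k)\}\), which has codimension \(g\).

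The key input is Conjecture~\ref{conj:GeneralizedMM}, applied to \(X=W_{f_1}\times\dots\times W_{f_r}\) in \(J^r\), with \(J\) geometrically simple. I expect it to state the following: the union over \(\be\) of the intersections \(X\cap H_{\be}\), outside a finite union of subgroups in \(\be\)-space, is contained in a proper closed set whose special subvarieties are controlled. Since \(\dim X=\sum f_i<g=\operatorname{codim}H_{\be}\), these are unlikely intersections. For \(\be\) outside a finite union of subgroups of \(\Z^r\), the intersection \(X\cap H_\be\) therefore consists of finitely many points, or of points coming from anomalous positive-dimensional pieces. The excluded subgroups are exactly those \(\be\) for which \(H_\be\) meets \(X\) anomalously. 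The minimality and coprimality hypotheses on \(\min_i e_i^0f_i\), together with the absence of low-degree points, rule out the degenerate solutions where some \(D_i\) is itself \(k\)-rational, mirroring the argument of Theorem~\ref{thm:IntroRank1Splitting}. This leaves finitely many geometric configurations \((D_i)\) over \(\bar k\), all defined over a fixed number field \(K\).

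Finally, from finitely many configurations I would get finitely many places. If \(x\) reduces at \(v\) to \(\sum e_i y_i\) with \(\sum e_i\mathrm{AJ}(y_i)\) equal to the reduction of one of finitely many \(\bar k\)-configurations, then \(v\) is a place where those fixed algebraic points have bad reduction or collide. Alternatively, \(x\) itself equals \(\sum e_iD_i\) globally, which is impossible for a closed point when some \(e_i>1\), or forces a point of low degree. Either way, only finitely many \(v\) arise, so \(\spp\) is unlikely. The main obstacle is the transfer between characteristic \(p\) and characteristic \(0\). The conjecture is a characteristic-\(0\) statement, but the ramification data lives only modulo \(v\), so the relation must be lifted to the \(v\)-adic points of \(X\cap H_\be\) uniformly in \(v\). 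I would try to handle this by working with the \(v\)-adic point \(x\in \Sym^{\deg\spp}C(k_v)\) and its proximity to the image of \(X\), using the finiteness of \(J_C(k)\) to pin the class exactly.
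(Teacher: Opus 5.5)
There is a genuine gap in your last step. From Conjecture~\ref{conj:GeneralizedMM} you extract, at best, that for $\be$ outside finitely many subgroups the characteristic-$0$ intersection consists of \emph{finitely many} configurations. You then assert that finitely many configurations give finitely many places (``bad reduction or collision''). That inference is false.

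Suppose $Z\cap J_C^r[\be/\be^0]$, with $Z=(\iota_D\circ[\Diag(\be^0)])(W^{\bff})$, contains a point defined over a number field $K$. Its reduction is an honest $\F_v$-point for every $v$ with a degree-$1$ place in $K$, which is a positive-density set. Via Theorem~\ref{thm:Reconstruction}, such points typically come from global points realizing $\spp$ or a refinement (Proposition~\ref{prop:SymIntersectionNonemptyImpliesARefinementIsLikely}, Corollary~\ref{cor:Thm1.1Sharp}).

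The paper's example of the curve 52969.b.52969.1 makes this concrete. There $\iota_D(C)\cap J_C[3]$ is finite and nonempty, yet degree-$3$ total ramification occurs at every prime $p\equiv 1,2,4\pmod 7$ outside a finite set.

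What you need is \emph{emptiness} of the characteristic-$0$ intersection for each fixed $\be$. Once you have that, the char-$p$/char-$0$ transfer you flag as the main obstacle disappears. Lemma~\ref{lem:SplProfileImpliesFvPoints} turns a global point with profile $\spp$ at $v$ into an $\F_v$-point of the preimage in $\Sym^{\bff}_C$. An empty $k$-variety has a model with $\F_v$-points at only finitely many $v$ (Lemma~\ref{lem:SymIntersectionEmptyImpliesUnlikely}). No lifting or uniformity in $v$ is needed, because unlikeliness concerns one $\spp$ at a time.

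Getting emptiness is where the hypotheses actually enter, and your account of their roles is off.
\begin{enumerate}
\item Since $J_C$ is geometrically simple and $\dim Z=\sum f_i<g$, Theorem~\ref{thm:ConsequenceZP_general} confines $Z\cap J_C^r[\be/\be^0]$ to finitely many \emph{torsion points} $\tau_1,\dots,\tau_n\in Z$. This holds for $\be/\be^0$ outside finitely many subgroups of rank $<r$.
\item Each $\tau_j$ lies in $J_C^r[\mathbf{u}]$ exactly for $\mathbf{u}$ in the full-rank subgroup $\{\mathbf{u}:[\mathbf{u}]\tau_j=0\}$, so these subgroups must be excluded as well. Finitely many proper full-rank subgroups can cover all of $\Z^r$ (Remark~\ref{rem:thm_ZP}), so a witness is needed.
\item The hypothesis that there are no points of degree $\le \min_i e_i^0f_i=e_1^0f_1$ gives $0\notin\pr_1(Z)$. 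Hence $(1,0,\dots,0)$ lies in none of these full-rank subgroups, which keeps the excluded union proper. It does not serve to ``rule out $k$-rational $D_i$''.
\item The coprimality hypothesis is not about injectivity of reduction. You also exclude the finite-index subgroups $\{\mathbf{u}:\sum_\ell (e^0_\ell f_\ell/\indx(C))u_\ell\equiv 0 \bmod p\}$ for $p\mid\#J_C(k)$, which again miss $(1,0,\dots,0)$. For the remaining $\be$, coprimality guarantees that every point of degree $\deg\spp$ lies in one of the finitely many linear systems $|\frac{\deg\spp}{\indx(C)}D_j|$ with $[D_j]\in\Pic^{\indx(C)}(C)$. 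Then the emptiness statement for each $D_j$ covers all points on $C$.
\end{enumerate}
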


\begin{remark}
    Since \(\prod_i e_i\Z\) is finitely generated, the complement of a proper finite union of subgroups must include a coset of a finite index subgroup~\cite{VirayVogt}*{Lemma 4.3.8}.  Assume the Zilber--Pink conjecture. If \(J_C\) is geometrically simple, \(J_C(k) = \{O\}\), and \(\indx(C) = 1\), then for each \(\bff = (f_1, \dots, f_r)\) for which \(\sum_i f_i < g\) and \(C\) has no points of degree at most \(\min_i(f_i)\), Theorem~\ref{thm:IntroUnlikelySplitting}  implies that a positive density set of splitting profiles \(\spp\) with \(\bff(\spp) = \bff\) are unlikely.
\end{remark}

As is typically expected in the unlikely intersections framework, there are some geometric conditions that can cause a splitting profile to be likely. The assumptions that \(\sum_i f_i < g(C)\), and that there are no points of degree at most \(\min_i(e_i^0f_i)\) exclude these scenarios. If the Jacobian of \(C\) has nontrivial isogeny factors then the assumptions required are more delicate. For the most general version of Theorem~\ref{thm:IntroUnlikelySplitting}, see Theorem~\ref{thm:unlikely_splitting}.

The methods of this paper apply more generally to determine whether particular splitting profiles are unlikely in the linear systems \(\{|dD|\}_{d\in \Z_{>0}}\) for \(D\) a fixed divisor of positive degree (See Section~\ref{sec:UnlikelySplittingProfiles} for more details on this). For a curve \(C/k\) that has index \(1\) and whose Jacobian has rank \(0\), the union of linear systems 
\[
\bigcup_{D\in \Pic^1_C(k)}\bigcup_{d\in \Z_{>0}} |dD|
\]
contains all closed points on \(C\) whose degree is coprime to \(\#J_C(k)\), and thus we can deduce statements like Theorems~\ref{thm:IntroTotallyRamified}
,~\ref{thm:IntroRank1Splitting}, 
and~\ref{thm:IntroUnlikelySplitting}.

The method of proof also gives an effective (but not necessarily efficient) method to compute the finite set of places where an unlikely splitting profile can be realized. We compute these finite sets for 20 genus \(2\) curves in the case of total ramification in degrees \(3\) and \(5\); they are listed below in Table~\ref{table:genus2}. The proof that Table~\ref{table:genus2} is correct is in Section~\ref{sec:ProofOfTable} and the associated code is in the \href{https://github.com/b-viray/unlikely-ramification/}{Github repository} for this paper. (We compute these finite sets using a short Weierstrass model for \(C\), so this model has bad reduction at \(2\).)

%%%%%%%%%%%%%%%%%%%%%%%%%%%%%%%%%%%%%%%%%%%%%%%%%%%%%%%%%%%%%%%%%%%%%%%%%%%%%%%%
%%%%%%%%%%%%%%%%%%				Tables			%%%%%%%%%%%%%%%%%%%%%%%%
%%%%%%%%%%%%%%%%%%%%%%%%%%%%%%%%%%%%%%%%%%%%%%%%%%%%%%%%%%%%%%%%%%%%%%%%%%%%%%%%

\begin{table}[h]
\begin{tabular}{|r|r|r|}
\hline 
LMFDB label      &  \multicolumn{2}{|c|}{{\(\left\{p>2 : {\exists x\in C,\; \spp_p(\kk(x)/k) = \spp, \textup{ and }C\textup{ has good reduction at }p}\right\}^{\phantom{1}}\)}}                  
\\[10pt]
 &  \multicolumn{1}{|c|}{ \(\spp = \{(3,1)\}\)} & \multicolumn{1}{|c|}{\(\spp = \{(5,1)\}\)}\\
\hline
\href{https://www.lmfdb.org/Genus2Curve/Q/7676/a/7676/1}{7676.a.7676.1}    
            &                       & 316004230293431                       \\
\hline
\href{https://www.lmfdb.org/Genus2Curve/Q/9977/a/9977/1}{9977.a.9977.1}
            &  2027                 &     2837                               \\
\hline
\href{https://www.lmfdb.org/Genus2Curve/Q/11259/a/11259/1}{11259.a.11259.1}
            &                       & 232695223753                      \\
\hline
\href{https://www.lmfdb.org/Genus2Curve/Q/13072/a/13072/1}{13072.a.13072.1}
            &                  & 10301, 23625124907                         \\
\hline
\href{https://www.lmfdb.org/Genus2Curve/Q/15504/a/62016/1}{15504.a.62016.1}
            &                       & 11, 6451166627922673              \\
\hline
\href{https://www.lmfdb.org/Genus2Curve/Q/19170/c/76680/1}{19170.c.76680.1}
            & 35153            &                                         \\
\hline
\href{https://www.lmfdb.org/Genus2Curve/Q/21989/a/21989/1}{21989.a.21989.1}
            &  3                    &   1685914369                         \\
\hline
\href{https://www.lmfdb.org/Genus2Curve/Q/22032/a/22032/1}{22032.a.22032.1}
            &                       &   7054572663424903                      \\
\hline
\href{https://www.lmfdb.org/Genus2Curve/Q/22185/a/22185/1}{22185.a.22185.1}
            &                       &  4903622393                        \\
\hline
\href{https://www.lmfdb.org/Genus2Curve/Q/23033/a/23033/1}{23033.a.23033.1}
            &                       &  1277, 19181, 492731             \\
\hline
\href{https://www.lmfdb.org/Genus2Curve/Q/23735/a/23735/1}{23735.a.23735.1}
            &                       &  225889                       \\
\hline
\href{https://www.lmfdb.org/Genus2Curve/Q/25707/a/77121/1}{25707.a.77121.1}  
            &                       & 199, 28882395713187018090626537        \\
\hline
\href{https://www.lmfdb.org/Genus2Curve/Q/29636/a/474176/1}{29636.a.474176.1}
            &                       & 6719                        \\
\hline
\href{https://www.lmfdb.org/Genus2Curve/Q/31174/a/342914/1}{31174.a.342914.1}
            &                       &  1044737, 553763689903, 10546179748889         \\
\hline
\href{https://www.lmfdb.org/Genus2Curve/Q/31668/a/63336/1}{31668.a.63336.1}
            &                       &  11                                     \\
\hline
\href{https://www.lmfdb.org/Genus2Curve/Q/36114/a/36114/1}{36114.a.36114.1}
            &                       & 5, 1091                              \\
\hline
\href{https://www.lmfdb.org/Genus2Curve/Q/41151/a/41151/1}{41151.a.41151.1}
            &                       & 41, 131, 293, 12647, 175891                  \\
\hline
\href{https://www.lmfdb.org/Genus2Curve/Q/43335/b/43335/1}{43335.b.43335.1}
            &                       & 17, 6583714939              \\
\hline
\href{https://www.lmfdb.org/Genus2Curve/Q/45175/a/45175/1}{45175.a.45175.1}  
            &   3                   & 88339, 5807603, 95190432083453372821    \\
\hline
\href{https://www.lmfdb.org/Genus2Curve/Q/47449/a/47449/1}{47449.a.47449.1}  
            &                       & 431, 2939, 880057   \\
\hline
\end{tabular}
\caption{Genus 2 curves that satisfy Theorem~\ref{thm:IntroTotallyRamified} with \(m=2\), so odd total ramification is unlikely. For \(e = 3, 5\), we list the finite set of odd primes of good reduction that are  totally ramified in the residue field of some degree \(e\) point on \(C\). See Section~\ref{sec:ProofOfTable} for more details.}
\label{table:genus2}
\end{table}

%%%%%%%%%%%%%%%%%%%%%%%%%%%%%%%%%%%%%%%%%%%%%%%%%%%%

The results mentioned thus far focus on criteria for when a given splitting profile is unlikely. We also explore the complementary question of when splitting profiles are likely. For example, we show analogously to the case when \(C=\PP^1\) mentioned in Remark~\ref{rmks:Likely}\ref{part:likelyP1}, all splitting profiles are likely on an elliptic curve. 

\begin{prop}\label{prop:IntroOnEAllProfilesLikely}
    Let \(E\) be an elliptic curve over a number field \(k\). For any \(x\in E(k)\) and any splitting profile \(\spp\), the set
    \[
    \{v\in \Omega_k :  \exists y\in E, y\sim \deg(\spp)x, \spp_v(\kk(y)/k) = \spp\}
    \]
    contains all sufficiently large primes. In particular, every splitting profile with \(\deg(\spp) =d\) is likely in the complete linear system \(|{d}x|\).
\end{prop}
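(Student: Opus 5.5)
Fix \(d=\deg(\spp)\) and \(x\in E(k)\). The divisors \(y\sim dx\) are exactly the effective degree \(d\) divisors that sum to \(dx\) in the group law. I will realize them as zero divisors of functions \(f\in L(dx)\), i.e. \(y=\operatorname{div}(f)+dx\). A closed point \(y\) with \(\kk(y)\) of degree \(d\) and splitting profile \(\spp\) at \(v\) then comes from a \(k\)-rational \(f\in L(dx)\) whose zero divisor is a single Galois orbit.

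The plan is to use the fibration structure. Embed \(E\) via \(|dx|\) (or pick a degree \(d\) morphism \(\phi\colon E\to\PP^1\) with \(\phi^*\infty=dx\); for \(d\ge 2\) such \(\phi\) exist, and \(d=1\) is trivial since \(\spp=\{(1,1)\}\) is realized by \(x\) at every \(v\)). The fibers \(\phi^{-1}(t)\) for \(t\in\PP^1(k)\) are divisors linearly equivalent to \(dx\). So it suffices to find, for each large \(v\), some \(t\in k\) with \(\phi^{-1}(t)\) irreducible over \(k\) and with \(v\)-adic splitting of type \(\spp\).

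\textbf{Step 1 (local).} For \(v\) large, of good reduction, with \(\phi\) having good reduction, I will find \(t_v\in k_v\) such that \(\phi^{-1}(t_v)\) is a reduced \(k_v\)-scheme \(\coprod \Spec L_w\) with \(e(L_w)=e_i\) and \(f(L_w)=f_i\). The strategy is to choose the local fields first and then construct a function. Pick unramified extensions of degree \(f_i\) with points \(\bar P_i\in \tilde E(\F_{v^{f_i}})\) of exact degree \(f_i\) and distinct orbits. Lift them and perturb to totally ramified degree \(e_i\) points over \(k_{v,f_i}\). Doing this requires \(\#\tilde E(\F_{q^{f}})\) to have new points, which holds by Hasse once \(q\) is large. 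Next impose \(\sum_i \operatorname{Tr}(P_i)=dx\) in \(E(k_v)\): adjust one point, or use the divisibility of the formal group. The resulting divisor is then \(\operatorname{div}(f)+dx\) for some \(f\in L(dx)\otimes k_v\).

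I expect this to be the main obstacle. The hard part is controlling ramification precisely while maintaining the sum condition. I would handle it in the formal group: since \(\hat E(\frakm)\) is uniquely divisible by integers prime to \(p\) and is open, I can choose the points \(P_i\) freely in an open set of \(\prod E(L_w)\) realizing the prescribed local fields. The map to the sum is open, so the sum condition cuts out a nonempty open set near a suitable configuration. Wild ramification is avoided by taking \(v\nmid\) all \(e_i\).

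\textbf{Step 2 (global).} The set of \(f\in L(dx)\otimes k_v\) whose zero divisor has the given local type is open, by Krasner's lemma and continuity of roots. By the Hilbert irreducibility theorem for the cover \(|dx|\)-parametrized zero divisors, i.e. the incidence variety over \(\PP(L(dx))\cong\PP^{d-1}\) whose monodromy is \(S_d\), the Hilbertian set of \(k\)-rational \(f\) with irreducible zero divisor is dense in \(\prod_{v\in S}k_v\) for finite \(S\). Weak-approximation density of Hilbert sets then gives \(f\in L(dx)\) that is \(v\)-adically close to \(f_v\) and has irreducible zero divisor \(y\). Thus \(y\) is a closed point of degree \(d\), \(y\sim dx\), and \(\spp_v(\kk(y)/k)=\spp\). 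Since this works for every sufficiently large \(v\), infinitely many places occur, so \(\spp\) is likely in \(|dx|\).
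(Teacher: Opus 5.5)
\textbf{There is a genuine gap in Step~1, exactly where you locate the main obstacle.} You choose the reductions \(\bar P_i\) freely (exact degree \(f_i\), distinct orbits), and only afterwards try to impose \(\sum_i\operatorname{Tr}_{L_i/k_v}(P_i)=[d]x\). But reduction already constrains this sum. If \(P_i\in E(L_i)\) with \(e(L_i/k_v)=e_i\) and \(f(L_i/k_v)=f_i\), then \(\operatorname{Tr}_{L_i/k_v}(P_i)\) reduces to \(e_i\cdot\operatorname{Tr}_{\F_{q^{f_i}}/\F_q}(\bar P_i)\). So your condition forces \(\sum_i e_i\operatorname{Tr}(\bar P_i)=[d]\bar x\) in \(\tilde E(\F_q)\). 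In other words, the reductions must already satisfy \(\sum_i e_i\bar x_i\sim d\bar x\), which is precisely the \(\F_v\)-configuration of Lemma~\ref{lem:SplProfileImpliesFvPoints} and of the hypothesis of Theorem~\ref{thm:Reconstruction}.

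An arbitrary choice of \(\bar P_i\) violates this, and neither repair you propose can fix it:
\begin{itemize}
\item Replacing \(P_1\) by \(P_1+Q\) with \(Q\in E(L_1)\) changes the reduced sum only by an element of \(e_1\tilde E(\F_q)\).
\item Divisibility of the formal group only corrects errors lying in \(\hat E(\mathfrak m_{k_v})\), the kernel of reduction.
\end{itemize}
For instance, take \(\bar x\) as origin, \(\spp=\{(2,1),(3,1)\}\), and \(6\mid\#\tilde E(\F_q)\). A choice with \(\bar P_1\notin 3\tilde E(\F_q)\) and \(\bar P_2\notin 2\tilde E(\F_q)\) cannot be repaired by moving either point alone. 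Likewise, the sum condition does not ``cut out a nonempty open set.'' It is a fiber of the sum map, closed of codimension one. Openness of that map helps only once you already have an admissible configuration whose sum is close to \([d]x\), and producing one is the whole content of the step.

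\textbf{The repair is to impose the residue-level equation from the start.} Take \(\bar x\) as origin.
\begin{itemize}
\item For the parts with \(f_i\geq 2\), choose distinct closed points of degree \(f_i\) in the kernel of \(\operatorname{Tr}\colon\tilde E(\F_{q^{f_i}})\to\tilde E(\F_q)\). By Lang's theorem and Hasse, this kernel has roughly \(q^{f_i-1}\) elements, far more than its points over proper subfields.
\item For the parts with \(f_j=1\), choose \(c_jt\) for a point \(t\) of large order and pairwise distinct integers \(c_j\) with \(\sum_j e_jc_j=0\).
\end{itemize}
Lift and perturb as you describe to get \(P_i\) with \(k_v(P_i)=L_i\). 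The remaining error \([d]x-\sum_i\operatorname{Tr}(P_i)\) then lies in \(\hat E(\mathfrak m_{k_v})\). You can absorb it by replacing \(P_1\) with \(P_1+Q\), where \(Q\in\hat E(\mathfrak m_{k_v})\) is the \(k_v\)-rational point with \([e_1f_1]Q\) equal to that error (here \(p\nmid e_1f_1\)). Because \(Q\) is \(k_v\)-rational, this changes neither \(k_v(P_1)\) nor any reduction, so the residue fields and the distinctness of orbits survive.

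With this, your Step~2 (Krasner plus density of Hilbert sets in \(\prod_{v\in S}k_v\)) goes through. You should drop the fixed pencil \(\phi\) from your opening reduction and work with all of \(|dx|\), as Step~2 already does: the divisor from Step~1 is generally not a fiber of \(\phi\).

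The repaired argument is a genuine alternative to the paper's, which never builds the \(k_v\)-configuration by hand. The paper instead obtains the needed \(\F_v\)-configuration from global points:
\begin{itemize}
\item Chebotarev for \(S_d\)-covers via Proposition~\ref{prop:UnramifiedOfLargeDegreeLikely};
\item scaling via Lemma~\ref{lem:ScalarMultOfRealizedIsRealized};
\item a direct argument in degree \(2\);
\item Lemma~\ref{lem:UnionOfRealizedIsRealized} to combine distinct ramification indices.
\end{itemize}
It then lifts that configuration with Theorem~\ref{thm:Reconstruction}.
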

The key ingredient in the proof is a lifting result, which may be of independent interest (Theorem~\ref{thm:Reconstruction}): given an effective divisor over \(\F_v\) in a sufficiently positive linear system, we construct a closed point \(x\in C_k\) whose splitting profile is determined by the components of the effective divisor over \(\F_v\). This lifting result also allows us to show in some situations that a likely splitting profile is realized by a positive density set of places. For example, in the context of Theorem~\ref{thm:IntroTotallyRamified} we prove the following.

\begin{theorem}\label{thm:IntroLikelyFrobenian}
    Let \(C/k\) be a nice curve whose Jacobian has rank \(0\). For any \(d \geq 2g(C) + 1\), the set of places
    \[
        \left\{v\in \Omega_k : \exists x\in C, \deg(x) = d, \kk(x)/k \text{ totally ramified at \(v\)}\right\}   
    \]
    is Frobenian. In particular, if the set is infinite, it has positive density.
\end{theorem}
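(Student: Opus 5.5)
Since \(J_C(k)\) is finite (rank \(0\)), the set \(\Pic^d_C(k)\) is finite: it is either empty or a torsor under \(J_C(k)\). Every closed point \(x\) of degree \(d\) lies in the complete linear system \(|D|\) for \(D = [x]\in \Pic^d_C(k)\). Because \(d\geq 2g+1\), each such class is represented by a \(k\)-rational divisor of degree \(d\). To see this, note that the obstruction lies in the Brauer group. It is killed once the class has an effective \(k\)-rational representative, and a closed point in the class supplies one. So we only need to consider the finitely many \(D\in\Pic^d_C(k)\) that are represented by \(k\)-rational divisors. For each such \(D\), the system \(|D|\cong \PP^{d-g}\) is very ample. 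The set in question is then the finite union over \(D\) of the sets
\[
T_D=\{v : \exists x\in |D| \textup{ integral}, \kk(x)/k \textup{ totally ramified at } v\}.
\]
A finite union of Frobenian sets is Frobenian, so it suffices to show each \(T_D\) is Frobenian (up to finitely many places).

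Fix \(D\) and spread out \(C\), \(D\), and the Jacobian over an open subset of \(\Spec \calO_k\) with good reduction. The plan is to characterize \(T_D\), away from finitely many places, by a condition on the reduction. Concretely, I would show that \(v\in T_D\) if and only if there exists \(y\in C(\F_v)\) with \(d\,y\) in the reduction \(\bar D\) of the class of \(D\) in \(\Pic^d_{C_{\F_v}}(\F_v)\).

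For the forward direction, suppose \(x\) is totally ramified at \(v\). Then the closure of \(x\) in the integral model meets the special fibre in a single \(\F_v\)-point \(y\) with multiplicity \(d\), so \(dy\sim \bar D\). For the converse, the lifting result Theorem~\ref{thm:Reconstruction} applies. Given the effective divisor \(d y\in|\bar D|\) over \(\F_v\), where \(\bar D\) is sufficiently positive because \(d\geq 2g+1\), it produces a closed point \(x\in|D|\) whose splitting profile at \(v\) is \(\{(d,1)\}\), i.e.\ totally ramified. The construction should be an Eisenstein-type lift in the ambient \(\PP^{d-g}\).

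Next I would translate the criterion into a Galois condition. Consider the map \(\phi\colon C\to \Pic^d_C\), \(y\mapsto dy\), and the fibre \(Z_D=\phi^{-1}([D])\). This is a finite \(k\)-scheme, namely the geometric points \(y\) with \(d y\sim D\), a torsor-like set under \(J_C[d]\) intersected with \(C\). It is finite because \(\phi\) is nonconstant and \(C\) is a curve, and it is defined over \(k\) since \([D]\) is \(k\)-rational. The condition at \(v\) now reads: \(Z_D\) has an \(\F_v\)-rational point in its reduction. For \(v\) outside a finite set, \(Z_D\) has étale, good reduction, and the reduction of \(Z_D\) is exactly the fibre of \(\bar\phi\) over \(\bar D\).

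Writing \(Z_D=\coprod\Spec L_j\), the condition becomes that some \(L_j\) has a degree-one prime above \(v\). This is a condition on the Frobenius conjugacy class in the Galois closure of the \(L_j\), so it is Frobenian, and the set is Frobenian. The "in particular" then follows from Chebotarev: a Frobenian set is determined by a union of conjugacy classes. If that union is empty the set is finite. Otherwise it has positive density.

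The main obstacle is the equivalence in the second paragraph, and I expect it to need care in two places. First, in the forward direction, one must control places where the reduction of \(Z_D\) is not étale, for instance when \(v\mid d\) or where \(\phi\) ramifies. These places are finite in number for each \(D\), but one must check that the needed exceptional set does not depend on anything beyond \(D\). Second, in the converse, one must make sure Theorem~\ref{thm:Reconstruction} really yields an integral point of degree \(d\) in the complete linear system \(|D|\) itself, not merely in the class over \(k_v\). I would first try to handle this by approximating, within \(\HH^0(C,\calO(D))\), a local Eisenstein section at \(v\) by a global section.
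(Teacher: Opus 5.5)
Your proposal is the paper's argument specialized to \(\spp=\{(d,1)\}\). Your \(Z_D\) is exactly \(Y_{\spp,D,\{0\}}\), the forward direction is Lemma~\ref{lem:SplProfileImpliesFvPoints}, and the converse is Theorem~\ref{thm:Reconstruction}, whose conclusion already gives a closed point \(x\sim D\) over \(k\), so your second worry is moot. The only real difference is that you get the Frobenian property from Chebotarev on the zero-dimensional \(Z_D\) instead of citing Loughran--Smeets via Corollary~\ref{cor:MaximalLikelyIsPositiveDensity}, which is more elementary and avoids the paper's likely/unlikely case split.

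One small fix is needed: finiteness of \(Z_D\) requires \(g\geq 1\). When \(g=0\), \(\Pic^d_C\) is a point and \(y\mapsto dy\) is constant, so \(Z_D=C\). This conic has \(\F_v\)-points at every place of good reduction, so the set is cofinite and hence Frobenian.
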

See Corollary~\ref{cor:MaximalLikelyIsPositiveDensity} for the most general version of this statement.

%%%%%%%%%%%%%%%%%%%%%%%%%%%%%%%%%%%%%%%%%%%%%%%%%%%%
%%%%%%%%%%%%%%%%%%%%%%%%%%%%%%%%%%%%%%%%%%%%%%%%%%%%

%%%%%%%%%%%%%%%%%%%%%%%%%%%%%%%%%%%%%%%%%%%%%%%%%%%%
%%%%%%%%%%%%%%%%%%%%%%%%%%%%%%%%%%%%%%%%%%%%%%%%%%%%

\subsection{Related work}
\subsubsection{\(\ell\)-Diophantine Stability} In~\cite{MazurRubin}, Mazur and Rubin introduce the notion of \(\ell\)-Diophantine stability: given a prime integer \(\ell\), a variety \(X\) over a global field \(k\) is \defi{\(\ell\)-Diophantine-stable} if for every \(n\) and every finite set of places \(\Sigma \subset \Omega_k\), there are infinitely many cyclic extensions \(L/k\) of degree \(\ell^n\) completely split at all places in \(\Sigma\) and such that \(X\) has no points whose residue fields are contained in \(L\) and strictly larger than \(k\) (in other words, \(X(L) = X(k)\)). Among other results, they prove that every curve of genus at least \(1\) whose endomorphism ring is equal to its geometric endomorphism ring is \(\ell\)-Diophantine-stable for a positive density set of \(\ell\)~\cite{MazurRubin}*{Theorem 1.3}. This result is similar, but not directly comparable, to Corollary~\ref{cor:FinitelyManyCyclicIntro}, even for a fixed curve that satisfies both hypotheses. Indeed, 
Corollary~\ref{cor:FinitelyManyCyclicIntro} gives a stronger restriction on cyclic points of \emph{prime} degree than~\cite{MazurRubin}*{Theorem 1.3}. However, our results do not constrain cyclic points with degree a nontrivial power of a prime, since cyclic extensions of degree \(\ell^r\) with \(r>1\) can have ramified primes that are not totally ramified. On the other hand,~\cite{MazurRubin}*{Theorem 1.3} rules out infinitely many degree \(\ell^r\) cyclic residue fields for a positive proportion of primes \(\ell\).

\begin{remark}
    If one is interested in studying closed points on a curve \(C\) whose residue field is Galois with Galois group \(G\), then the relevant moduli space is \(C^{|G|}/G\). If \(G\) is cyclic of order at least \(3\) and \(C\) has genus at least \(2\), then \(C^{|G|}/G\) is general type~\cite{Rawson-Solvable}*{Theorem 1}, so the Bombieri--Lang conjecture predicts that the rational points are not Zariski dense. 
\end{remark}

\subsubsection{Distributions of number fields} If a nice curve \(C\) satisfies the assumptions of Theorem~\ref{thm:IntroTotallyRamified}, then, since there exist number fields totally ramified at any prime (see Remark~\ref{rmks:Likely}\ref{part:likelyP1}), there are infinitely many number fields of degree \(n \colonequals d\indx(C)\) that \emph{cannot} be the residue field of a point on \(C\) whenever \(d\) is coprime to \(m_C\). In fact, the Bhargava--Malle heuristics suggest that this excluded infinite set of number fields is a positive proportion of all \(S_{{n}}\) number fields when \(n \geq 3\), as we now explain.

In~\cite{Bhargava-LocalMass}*{Page 10}, Bhargava outlines the following heuristic for the distribution of number fields: \textit{the expected (weighted) number of global \(S_{n}\)-number fields of discriminant \(D\) is simply the (infinite) product of the (weighted) number of local extensions (i.e., \'etale algebras \(K/\Q_p\) of degree \(n\)) that are discriminant-compatible with \(D\).} This heuristic predicts (see~\cite{Bhargava-LocalMass}*{(4.2)}) that for a finite set of primes \(S\) of \(\Q\), the proportion of \(S_{n}\) number fields that have ramification indices strictly less than \({n}\) for all primes lying over a prime \(p\notin S\) is:
\begin{equation}\label{eq:proportion}
    \prod_{p\notin S} \left(1 - \frac{\sum_{[K:\Q_p]={n}, \textup{totally ramified}} \frac{1}{\Disc_p(K)}\cdot \frac1{\#\Aut(K)}}{\sum_{[K:\Q_p]={n}} \frac{1}{\Disc_p(K)}\cdot \frac1{\#\Aut(K)}}\right).
\end{equation}

Serre showed that~\cite{Serre-MassFormula}*{Th\'eor\`eme 2}
\[
\sum_{\substack{[K\colon\mathbb{Q}_p] =n\\ K/\Q_p \textup{ totally ramified}}} \frac{1}{\Disc_p(K)}\cdot\frac{1}{\#\Aut(K)} = \frac{1}{p^{n-1}}.
\]
and Bhargava showed that~\cite{Bhargava-LocalMass}*{Theorem 1.1}
\[
\sum_{[K:\Q_p]={n}} \frac{1}{\Disc_p(K)}\cdot \frac1{\#\Aut(K)} = \sum_{k=0}^{n-1} q(k, n-k)p^{-k},
\]
where \(q(k, n-k)\) denotes the number of partitions of \(k\) into at most \(n-k\) parts. Thus, ~\eqref{eq:proportion} becomes
\[
 \prod_{p\notin S} \left(1 - \frac{1}{\sum_{k=0}^{n-1}q(k, n-k)p^{n-1-k}}\right).
\]
Since \(q(0, n) = 1\), the above product is bounded above by \(\prod_{p\notin S'}\left(1 - \frac{1}{cp^{n-1}}\right)\) for a (possibly larger) finite set \(S'\) and a positive constant \(c\). For \(n\geq 3\), this upper bound converges to a constant in the open interval \((0,1)\). Therefore, the heuristic predicts that the set of fields excluded by Theorem~\ref{thm:IntroTotallyRamified} is a positive proportion of all \(S_{n}\) fields.

\subsubsection{Unlikely intersections} As mentioned above, a key input in the proof of Theorem~\ref{thm:IntroRank1Splitting} is the Manin--Mumford conjecture (proved by Raynaud), which describes the structure of \(Z\cap A_{\tors}\), where \(Z\subset A\) is a closed subvariety of an abelian variety. For the more general, albeit conditional, Theorem~\ref{thm:IntroUnlikelySplitting}, the relevant intersection is
\[
    Z\cap \bigcup_{\mathbf{e}} A^r[\mathbf{e}],
\]
where the union ranges over some subset of positive integer vectors \(\mathbf{e}\in \Z_{>0}^r\), \(Z\subset A^r\) is a closed subvariety of a self-product of an abelian variety, and \(A^r[\mathbf{e}]\) denotes the kernel of the morphism \(A^r \xrightarrow{[\mathbf{e}]} A, \;(p_1, \dots,p_r)\mapsto \sum_i e_ip_i\). Specializing the Zilber--Pink conjecture to intersections of this form gives Conjecture~\ref{conj:GeneralizedMM}, which we contend is a natural intermediate generalization of the Manin--Mumford conjecture. (The case when the union ranges over a rank \(1\) subgroup of \(\Z^r\) can be reduced to the Manin--Mumford conjecture.) To the best of our knowledge, Conjecture~\ref{conj:GeneralizedMM} is open, even in the simplest case where we obtain interesting applications: when \(\dim A = 3\), \(r=2=\dim Z\) and \(A\) is geometrically simple.

\subsubsection{Splitting profiles in basepoint free pencils}
    The problem of determining which splitting profiles appear as \(\spp_v(\kk(x)/k)\) for \(x\) ranging over a basepoint free pencil, i.e., fibers of a morphism \(C\to \PP^1\), has been explored extensively for Galois morphisms. Indeed, works of Beckmann, D\`ebes, Ghazi, K\"onig, Legrand, and Neftin~\cites{Beckmann, DG2012,  DL-TAMS, KLN-GrunwaldRamified, Legrand} give a complete answer to this question (for all but finitely many places), and explore its applications to the Grunwald and parameterization problems. In the non-Galois case, recent work of Bal\c{c}ik, Chan, Liu, and the first author~\cite{BCLV} studies the splitting profiles \(\spp_v(\kk(C_t)/k)\) and the isomorphism classes of \(\kk(C_t)\otimes_k k_v\) for all but finitely many places. The results in these papers give necessary and sufficient conditions for a splitting profile to be realized for all places where the \emph{morphism} \(C\to \PP^1\) has good reduction. This may exclude finitely many primes in addition to the primes where the curve has bad reduction.

\subsubsection{Algebraic integral points on punctured curves}
As discussed in Remarks~\ref{rmks:Thm1}, curves of genus at most 1 never satisfy the hypotheses of Theorem~\ref{thm:IntroTotallyRamified}, and, indeed, on a curve of genus at most 1, the set of places of total ramification given in Theorem~\ref{thm:IntroLikelyFrobenian} has positive density for all sufficiently large \(d\) divisible by the index. This sharp contrast in behavior for curves of genus at least \(2\) and curves of genus at most \(1\) echoes the contrast seen in Faltings's (curve) theorem: a curve has infinitely many \(k\)-points for some number field \(k\) \emph{if and only if} its genus is at most \(1\). However, one can obtain a statement with no assumption on genus by combining Faltings theorem with Siegel's Theorem and allowing for punctured curves: Any hyperbolic curve has only finitely many \(S\) integral points, for any number field \(k\) and finite set of places \(S\). Thus, one may ask if an analogue of Theorem~\ref{thm:IntroTotallyRamified} holds for punctured curves.  This line of research was pursued by Siksek for punctured genus \(1\) curves~\cite{Siksek-PuncturedAVs} and Freitas--Kraus--Siksek for \(S\)-unit equations~\cite{FKS-Unit}. The proofs in this paper and those of Siksek, Freitas--Kraus--Siksek are suggestive of each other, although, to the best of our knowledge, there are no direct implications in either direction.

\subsection{Outline}
    We begin in Section~\ref{sec:MMConsequences} setting up the unlikely intersections framework needed to prove our main theorems. As described above, the intersections relevant to our problem are of the form
    \[
       Z\cap \bigcup_{\mathbf{e}\in\mathbb{Z}^r} A^r[\mathbf{e}],
    \]
    where \(Z\subset A^r\) is a closed subvariety of a self-product of an abelian variety, and \(A^r[\mathbf{e}]\) denotes the kernel of the morphism \(A^r \xrightarrow{[\mathbf{e}]} A, \;(p_1, \dots,p_r)\mapsto \sum_i e_ip_i\). While the Zilber--Pink conjecture puts forth a necessary criterion for this intersection to be Zariski dense in \(Z\), we need to extract from this a condition on \(\mathbf{e}\) that guarantees that the intersection \(Z\cap A^r[\mathbf{e}]\) is \emph{empty}. We show that for \(\mathbf{e}\) away from a Zariski closed subset, the intersections \(Z\cap A^r[\mathbf{e}]\) are contained in the union of finitely many torsion translates, each of which has larger than expected intersection with \(Z\) (relative to the subgroups \(A^r[\mathbf{e}]\)). The characterizations of torsion translates and Zariski closed subsets become simpler when \(A\) has distinct isogeny factors; see Theorem~\ref{thm:ConsequenceZP_general} for the precise statement. These results may be of independent interest.
    
    In Section~\ref{sec:UnlikelySplittingProfiles}, we lay out the general strategy to determine when splitting profiles are unlikely in a complete linear system. The most general versions of our main theorems about unlikely splitting profiles are Theorems~\ref{thm:unlikely_splitting_rank1} and~\ref{thm:unlikely_splitting}. Theorems~\ref{thm:IntroTotallyRamified},~\ref{thm:IntroRank1Splitting} and~\ref{thm:IntroUnlikelySplitting} are obtained as corollaries of Theorems~\ref{thm:unlikely_splitting_rank1} and~\ref{thm:unlikely_splitting} as explained in Sections~\ref{sec:CorOfUnlikely_rank1} and~\ref{sec:CorOfUnlikelySplitting}. In Section~\ref{sec:ProofOfTable}, we explain how we computed the information in Table~\ref{table:genus2} and raise questions about finer properties of the sets of places realizing total ramification.

    In Section~\ref{sec:Reconstruction}, we explore converses to the results in Section~\ref{sec:UnlikelySplittingProfiles} for very ample linear systems.  The main theorem of this section is Theorem~\ref{thm:Reconstruction}, which morally can be viewed as an extension of Hensel's Lemma and Hilbert irreducibility to the case of non-smooth subschemes. From this theorem, we give several simple conditions that each guarantee likely ramification (Section~\ref{sec:LikelyExamples}) which together give a proof of Proposition~\ref{prop:IntroOnEAllProfilesLikely} (Section~\ref{sec:ProofOfIntroOnE}).

    We close in Section~\ref{sec:LikelyPositiveDensity} with a study of the set of places realizing a given splitting profile, including the proof of Theorem~\ref{thm:IntroLikelyFrobenian}. 

    \begin{remark}
        For readability, we have endeavored to make the sections as independent as possible. For example, Sections~\ref{sec:MMConsequences} and~\ref{sec:Reconstruction} (with the exception of Lemma~\ref{lem:ScalarMultOfRealizedIsRealized}) can each be read independently of all other sections. A reader willing to accept Proposition~\ref{prop:consequenceMM} and Theorem~\ref{thm:ConsequenceZPDistinctIsogeny} may read Section~\ref{sec:UnlikelySplittingProfiles} independently of all other sections. Finally, while all of Section~\ref{sec:overview} provides helpful context for Section~\ref{sec:LikelyPositiveDensity}, the arguments depend only on Lemma~\ref{lem:SplProfileImpliesFvPoints} and  Theorem~\ref{thm:Reconstruction}.
    \end{remark}

    \subsection{Notation}
    We use \(k\) to denote a number field. For any finite set of places \(S\), we write \(\OO_{k, S}\subset k\) for the ring of \(S\)-integers. Given a nonarchimedean place \(v\), we write \(\F_v\) for the residue field of \(v\).
    
    A nice curve \(C\) is a smooth projective geometrically integral scheme of dimension \(1\) over a field. Given a nice curve \(C\), we write \(g(C)\) for its genus, \(\Pic_C\) for its Picard scheme, and \(J_C\)  for the identity component (its Jacobian). Given a divisor \(D\) on \(C\) we write \([D]\) to denote its divisor class (modulo linear equivalence) and, given another divisor \(E\) on \(C\), we write \(E \sim D\) if \([E] = [D]\).  We write \(|D|\) for the complete linear system of \(D\). More generally, for any subvariety \(W\subset \Pic^d_C\), we write \(|W|\) to denote the set of effective divisors whose linear equivalence class is contained in \(W\).

    For a positive integer \(f\), we write \(\Sym^f_C\) for the \(f^{th}\) symmetric product and \(W^f_C\subset \Pic^f_C\) for the locus of effective degree \(f\) divisors. Given a vector of positive integers \(\mathbf{f}\in \Z_{>0}^r\), we write \(W^{\mathbf{f}}\) for the product \(W^{f_1}\times \cdots \times W^{f_r}\). For any divisor \(D\) of positive degree, translation by a suitable multiple of \(D\) gives an isomorphism between a component of the Picard scheme containing divisors with degree divisible by \(\deg(D)\) and the identity component \(J_C\). 

    More generally, if \(\bff\) is a tuple and \(D\) is a divisor of positive degree with \(\deg(D)|f_i\) for all \(i\), then we write \(\iota_D\) for the morphism
    \[
        \Pic^{f_1}_C\times \cdots \times\Pic^{f_r}_C \xrightarrow{\sim} J_C^{r}, \quad (z_1, \dots, z_r)\mapsto \left(z_1 - \textstyle\frac{f_1}{\deg(D)}D, \dots, z_r - \textstyle\frac{f_r}{\deg(D)}D\right).
    \]

    Given an abelian group \(A\) and a tuple of positive integers \(\mathbf{w}\in \Z_{>0}^r\), componentwise multiplication (alternatively multiplication by a diagonal matrix with integer entries) gives a morphism that we denote
    \[
        [\Diag(\bw)]\colon A^r\to A^r, \quad (z_1, \dots, z_r) \mapsto (w_1z_1, \dots w_rz_r).
    \]
    If \(A\) is torsionfree (e.g., \(A = \Z\)), then \([\Diag(\bw)]\) is injective. Under this assumption, given an element \(\bw'\in [\Diag(\bw)]A^r\), we write \(\bw'/\bw\) for the unique preimage 
    \([\Diag(\bw)]^{-1}(\bw')\).
    We write \(\sigma\colon A^r\to A\) for the summation map \(\sigma(z_1, \dots, z_r) = z_1 + \dots + z_r\), and we write \([\bw]\) for the composition \(\sigma\circ [\Diag(\bw)]\), which can be thought of as a dot product map. We denote the kernel of this morphism by \(A^r[\bw]\).

    Given a smooth proper integral model \(\calC \to \Spec \OO_{k,S}\) with generic fiber \(C_{k} = C\), the relative Picard variety \(\Pic_{\calC/\OO_{k,S}}\) is then a group scheme over \(\Spec \OO_{k,S}\) and every component \(\Pic^d_{\calC/\OO_{k,S}}\) is smooth and proper over \(\Spec \OO_{k,S}\). Given a closed subscheme \(H \subset \Pic_{C/k}\), we write \(H_{\F_v}\) for the fiber of the closure of \(H\) in \(\Pic_{\calC/\OO_{k,S}}\) over \(v\). (This is nonempty by the valuative criterion for properness.) In particular, there exists a unique limit of any \(k\)-divisor class \(D\) modulo \(v\) for every \(v\in \Spec \OO_{k,S}\), which we denote \(D_{\F_v}\). A closed point \(x \in C\) determines a map \(\Spec \OO_{\kk(x){, S}} \to \calC\).

   %%%%%%%%%%%%%%%%%%%%%%%%%%%%%%%%%%%%%%%%%%%%%%%%%%%%%%%%%%%%%%%%%%%%%%%%%%%%%%%%%%%%%%%%%%%%%%%%%%%%%%%%%%
\section*{Acknowledgements}
    We thank Laura Capuano, Jordan Ellenberg, Aaron Levin, Carlo Pagano, Joe Silverman, Jakob Stix, and Melanie Matchett Wood for  conversations that helped contextualize these results.  We thank James Rawson for pointing out a mistake in an earlier draft of this paper. We acknowledge the use of Claude to obtain a preliminary referee report.

%%%%%%%%%%%%%%%%%%%%%%%%%%%%%%%%%%%%%%%%%%%%%%%%%%%%%%%%%%%%%%%%%%%%%%%%%%%%%%%%%%%%%%%%%%%%%%%%%%%%%%%%%%

%%%%%%%%%%%%%%%%%%%%%%%%%%%%%%%%%%%%%%%%%%%%%%%%%%%%%%%%%%%%%%%%%%%%%%%%%%%%%%%%%%%%%%%%%%%%%%%%%%%%%%%%%%
%%%%%%%%%%%%%%%%%%%%%%%%%%%%%%%%%%%%%%%%%%%%%%%%%%%%%%%%%%%%%%%%%%%%%%%%%%%%%%%%%%%%%%%%%%%%%%%%%%%%%%%%%%
\section{A generalization of the Manin--Mumford Conjecture}\label{sec:MMConsequences}
%%%%%%%%%%%%%%%%%%%%%%%%%%%%%%%%%%%%%%%%%%%%%%%%%%%%%%%%%%%%%%%%%%%%%%%%%%%%%%%%%%%%%%%%%%%%%%%%%%%%%%%%%%

The questions and results considered here are geometric, so in this section we work over~\(\C\).
The Manin--Mumford Conjecture (now a theorem of Raynaud) states that the Zariski closure of the torsion points on a subvariety \(Z\subset A\) of an abelian variety \(A\) decomposes as a finite union of torsion translates contained in \(Z\)~\citelist{\cite{Raynaud-MMcurves}*{Th\'eor\`eme I}, \cite{Raynaud-MMgeneral}*{Th\'eor\`eme, page 327}}. As a consequence, if \(Z\) does not contain \(0\), then \(Z\cap A[m] = \emptyset\) for a positive density set of positive integers.

\begin{prop}\label{prop:consequenceMM}
    Let \(A\) be an abelian variety and let \(Z \subset A\) be a closed subvariety such that \(0 \not\in Z\).  Then there exist positive integers \(m_1, \dots, m_s \in \Z_{\geq 2}\) such that
    \[
    Z \cap A[m] \neq \emptyset \;\Longleftrightarrow\; m \in \bigcup_i m_i \Z.
    \]
\end{prop}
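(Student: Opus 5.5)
We apply Raynaud's theorem to the torsion points of \(Z\). The Zariski closure of \(Z\cap A_{\tors}\) is a finite union \(\bigcup_{j=1}^s (t_j + B_j)\subset Z\), where each \(B_j\subset A\) is an abelian subvariety and \(t_j\in A_{\tors}\). We may choose each \(t_j\) of minimal order within its coset \(t_j + B_j\). Equivalently, consider the image \(\bar t_j\) of \(t_j\) in the abelian variety \(A/B_j\); we can lift a point of order \(n\) in \(A/B_j\) to a torsion point of order \(n\) in \(A\). The latter holds because \([n]\colon B_j\to B_j\) is surjective: if \(nt\in B_j\), write \(nt = nb\) with \(b\in B_j\) and replace \(t\) by \(t-b\). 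Set \(m_j \colonequals \ord(\bar t_j)\), the order of \(t_j + B_j\) in the group \(A/B_j\).

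First we show \(m_j\ge 2\). If \(m_j = 1\), then \(t_j\in B_j\), so \(0\in t_j + B_j\subset Z\), contradicting \(0\notin Z\).

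Next, the forward direction. Suppose \(P\in Z\cap A[m]\). Then \(P\) is a torsion point of \(Z\), so \(P\in t_j + B_j\) for some \(j\). Since \(mP = 0\), the class of \(P\) in \(A/B_j\), which equals \(\bar t_j\), is killed by \(m\). Hence \(m_j\mid m\), that is, \(m\in m_j\Z\).

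Finally, the converse. Suppose \(m\in m_j\Z\). By the minimal-order choice, there is a torsion point \(t\in t_j + B_j\) of order exactly \(m_j\). Then \(t\in Z\cap A[m_j]\subset Z\cap A[m]\), so the intersection is nonempty.

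Combining the two directions gives
\[
Z\cap A[m]\ne\emptyset \iff m\in \bigcup_j m_j\Z.
\]

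The only step needing care is making the converse work. We must pick representatives \(t_j\) of exact order \(m_j\) inside the coset, rather than some arbitrary torsion representative, and this rests on the divisibility of \(B_j\).
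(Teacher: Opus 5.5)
Your proof is correct and is essentially the paper's argument. The paper likewise uses Manin--Mumford to reduce to torsion translates \(\tau + B \subset Z\) (organized as an induction on \(\dim Z\) rather than by invoking the structural form of Raynaud's theorem directly), notes \(\tau \notin B\) because \(0 \notin Z\), and takes \(m = \ord_{A/B}(\tau)\) via Lemma~\ref{lem:TorsionTranslate}; your use of the divisibility of \(B_j\) to find a point killed by \(m_j\) in the coset is exactly the step hidden in that lemma's identity \([e](\tau + B) = e\tau + B\).
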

\noindent(See Section~\ref{sec:ProofOfConsequenceMM} for the proof.)

We are interested in generalizations where we consider a subvariety of a power of an abelian variety, \(Z\subset A^r\) and its intersection with the subgroups \(A^r[\be]\) defined above. (This case arises in our study of unlikely splitting profiles in linear systems of curves; see Section~\ref{sec:overview} for details about this connection.)

Informally, the Zilber--Pink conjecture predicts that \(Z\) contains a Zariski dense set of points that are in \(\bigcup_{\mathbf{e}}A^r[\mathbf{e}]\) only if codimension estimates predict that it is likely. More precisely, we have the following.

\begin{defn}\label{defn:calEZ}
If \(Z \subset A^r\) is an integral subvariety, let \(\langle Z \rangle \subset A^r\) be the smallest algebraic subgroup of \(A^{r}\) containing \(Z\) and define
\begin{equation}\label{eq:EZ}
    \calE_Z \colonequals \{\be \in \Z^r  : \dim \langle Z \rangle \cap A^r[\be] \geq \dim \langle Z \rangle - \dim Z\}.
\end{equation}
Since \(A^r[\be]\) is the kernel of the morphism \([\be] \colon A^r \to A\), we have \(\dim \langle Z\rangle\cap A^r[\be] = \dim \langle Z\rangle - \dim [\be]\langle Z\rangle.\) Thus
    \begin{align}\label{eq:calEZ_alternate}
    \calE_Z := &\{\be : \dim \langle Z\rangle\cap A^r[\be] \geq \dim \langle Z\rangle - \dim Z\} \notag \\
    = & \{\be : \dim \langle Z\rangle - \dim [\be]\langle Z\rangle \geq \dim \langle Z\rangle - \dim Z\}\notag \\
    = & \{\be : \dim Z\geq \dim [\be]\langle Z\rangle\}.
    \end{align}
\end{defn}

\begin{conj}[Follows from the Zilber--Pink Conjecture; see Section~\ref{sec:RelationZP} for details]\label{conj:GeneralizedMM}
    Let \(A\) be an abelian variety and let \(Z \subset A^r\) be an integral subvariety.    Then
    \[Z \cap \left(\bigcup_{\be \in \Z^r \smallsetminus \calE_Z} A^r[\be]  \right) \]
    is not Zariski dense in \(Z\).
\end{conj}
\begin{remark}
    Observe that when \(r=1\), the subgroup \(A^r[\be]\)  is simply the \(e_1\)-torsion \(A[e_1]\) and so \(\dim\langle Z \rangle \cap A[e_1] \leq 0\). Additionally, \(\dim Z  = \dim \langle Z \rangle\) if and only if \(Z\) is a torsion translate. Hence, when \(r=1\), \(\calE_Z \neq \{0\}\) if and only if \(Z\) is a torsion translate and therefore the above conjecture reduces to the Manin--Mumford conjecture.
\end{remark}
Assuming this conjecture, we can prove a generalization of Proposition~\ref{prop:consequenceMM} to \(r > 1\).

\begin{theorem}\label{thm:ConsequenceZPDistinctIsogeny}
    Let \(A\) be an abelian variety and let \(Z \subset A^r\) be an integral subvariety. Assume Conjecture~\ref{conj:GeneralizedMM} and that \(A\) has distinct isogeny factors. Suppose that there exists \(\tilde{\be} \in \Z^r\) such that for every abelian subvariety \(B \subset A\) and every torsion point \(\tau\in A^r\), either \(\dim Z \cap (\tau +
    B^r) < \dim B\) or \([\tilde{\be}]\tau \not\in B\).  Then there exist finitely many proper subgroups \(M_1, \dots, M_s \subsetneq \Z^r\)  such that
    \[Z \cap \left( \bigcup_{\be \in \Z^r \smallsetminus \cup_i M_i} A^r[\be]\right) = \emptyset.\]
    The subgroups may be chosen so that additionally \(\bigcup_{i=1}^s M_i\subsetneq \Z^r\) and \(\tilde{\be}\notin M_i\) for each finite index subgroup \(M_i\).
\end{theorem}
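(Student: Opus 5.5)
We argue by Noetherian induction on \(Z\), using Conjecture~\ref{conj:GeneralizedMM} to peel off a proper closed subset at each stage. Let \(\calE_Z\) be as in~\eqref{eq:calEZ_alternate}.

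\emph{Step 1: the exceptional set is a finite union of subgroups.} Since \(A\) has distinct isogeny factors, every abelian subvariety of \(A^r\) is, up to isogeny, a product over the simple factors \(A_j\) of \(A\) of subspaces of \(A_j^r\) cut out by \(\operatorname{End}(A_j)\)-linear conditions. In particular \(\langle Z\rangle^0 = \prod_j B_j\) with \(B_j \subset A_j^r\). Then \(\dim [\be]\langle Z\rangle = \sum_j \dim [\be] B_j\). Each summand takes the value \(0\) exactly when \(\be\) lies in the annihilator of \(B_j\), and this annihilator is a subgroup of \(\Z^r\); otherwise the summand is \(\dim A_j\). Hence the condition \(\dim [\be]\langle Z\rangle \le \dim Z\) is a finite union, over subsets \(J\) of factors with \(\sum_{j\notin J}\dim A_j \le \dim Z\), of the subgroups \(\bigcap_{j\in J}\operatorname{Ann}(B_j)\). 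So \(\calE_Z\) is a finite union of subgroups of \(\Z^r\).

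\emph{Step 2: the finite-index pieces miss \(\tilde\be\).} Suppose one of these subgroups \(M\) has finite index. Then \(\operatorname{Ann}(B_j)\) has finite index for each \(j\in J\), which forces \(B_j = 0\) for \(j \in J\). In that case \(\langle Z\rangle^0 \subset B^r\) with \(B = \prod_{j\notin J}A_j\), and \(\dim B \le \dim Z\). Thus \(Z\) lies in a single torsion translate \(\tau + B^r\) with \(\dim Z\cap(\tau+B^r)\ge \dim B\). The hypothesis then gives \([\tilde\be]\tau\notin B\). Intersecting with \(A^r[\be]\) for \(\be\in M\) only happens when \([\be]\tau\in B\), which is a coset condition. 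So we may refine \(M\) to the subgroup \(\{\be : [\be]\tau \in B\}\), or to a translate-stable subgroup of it, and this excludes \(\tilde\be\). Alternatively, when the intersection is empty we drop the piece entirely.

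\emph{Step 3: Noetherian induction.} By Conjecture~\ref{conj:GeneralizedMM}, the set \(Z\cap\bigcup_{\be\notin\calE_Z}A^r[\be]\) lies in a proper closed subset \(Z'\subsetneq Z\). Apply the argument to each irreducible component \(Z_i\) of \(Z'\). The hypothesis involving \(\tilde\be\) passes to closed subvarieties, since \(\dim Z_i\cap(\tau+B^r)\le \dim Z\cap(\tau+B^r)\). Noetherianity gives a finite collection of subgroups. The union of the \(\calE\)'s, refined as in Step 2, gives the \(M_i\), and any point of \(Z\cap A^r[\be]\) with \(\be\) outside all \(M_i\) lands in ever-smaller subvarieties until it is empty. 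To make every \(M_i\) proper and \(\bigcup M_i\ne\Z^r\), we note that every finite-index \(M_i\) avoids \(\tilde\be\), so \(\tilde\be\notin\bigcup M_i\). A finite union of infinite-index subgroups is never all of \(\Z^r\), so we can discard any \(M_i=\Z^r\): such a piece would have finite index and contain \(\tilde\be\), contradicting Step 2.

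\emph{Main obstacle.} The delicate step is Step 2, specifically the coset bookkeeping for torsion translates that are not contained in \(Z\) but merely meet it in large dimension. Here I would pass to a component \(W\) of \(Z\cap(\tau+B^r)\) and use \(\langle W\rangle\subset\tau+B^r\). I would also need to be careful that \(\langle Z\rangle\) may be disconnected, handling this by working component-by-component with its torsion cosets.
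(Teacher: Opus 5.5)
Your main line is the paper's argument. Step~1 is the isotypic-decomposition version of Lemma~\ref{lem:EZB_subgroup} and Corollary~\ref{cor:EZ_eq_Zr}. The case \(\calE_Z=\Z^r\) forces \(Z\subset\tau+B^r\) and is handled by the subgroup \(\{\be : [\be]\tau\in B\}\) of Lemma~\ref{lem:TorsionTranslate}. Otherwise, Conjecture~\ref{conj:GeneralizedMM} plus induction on dimension finishes.

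The only organizational difference is where you use the \(\tilde{\be}\)-hypothesis. You invoke it at each base case of the induction, where it is inherited by the components \(Z_i\). The paper instead first proves Theorem~\ref{thm:ConsequenceZP_general}\eqref{part:ConsZP_general_1}, collecting the torsion translates \(\tau_j+B_j^r\), and removes them all at once in part~\eqref{part:ConsZP_general_2}. Your version is harmless, and it even treats \(\tilde{\be}=\mathbf{0}\) uniformly, since the base case then cannot occur.

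The last paragraph of Step~3, however, contains a false claim. You only know that the \emph{finite-index} \(M_i\) avoid \(\tilde{\be}\). The infinite-index ones are pieces of the sets \(\calE_{Z_i}\) met during the induction, and nothing prevents them from containing \(\tilde{\be}\). So \(\tilde{\be}\notin\bigcup_i M_i\) does not follow. Nor does ``a finite union of infinite-index subgroups is never \(\Z^r\)'' give \(\bigcup_i M_i\neq\Z^r\) once finite-index subgroups are also in the union.

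The missing argument is the coset argument the paper uses. Let \(N\) be the intersection of the finite-index \(M_i\). For each such \(M_i\) we have \(N\subset M_i\) and \(\tilde{\be}\notin M_i\), so the entire coset \(\tilde{\be}+N\) misses every finite-index \(M_i\). A coset of a finite-index subgroup cannot be covered by finitely many subgroups of rank \(<r\): in a box of side \(X\) it has \(\gg X^r\) points, versus \(O(X^{r-1})\) for each such subgroup. With this repair the proof is complete.

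Finally, the ``main obstacle'' you flag does not arise. Torsion translates enter only through the base case \(\calE_{Z_i}=\Z^r\), where \(Z_i\) itself lies in \(\tau+B^r\). Then \(\dim Z_i\cap(\tau+B^r)=\dim Z_i\geq\dim B\), and the inherited hypothesis applies directly. No bookkeeping is needed for translates that merely meet \(Z\) in large dimension.
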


\begin{remarks}\label{rem:thm_ZP}\hfill
\begin{enumerate}[label=(\roman*)]
    \item As mentioned in the introduction, once 
    \(\bigcup_i M_i\subsetneq\Z^r\), the complement \(\Z^r \smallsetminus \bigcup_i M_i\) must include a coset of a finite index subgroup~\cite{VirayVogt}*{Lemma 4.3.8}.
    \item\label{rem2} An \(\tilde{\be}\) satisfying the assumptions of Theorem~\ref{thm:ConsequenceZPDistinctIsogeny} will never exist if \(0 \in Z\), (the condition fails for \(\tau = \mathbf{0}, B=\{0\}\)). The conclusions also fail in this case since \(0 \in A^r[\be]\) for all \(\be\in \Z^r\).  More generally, if \(\dim Z \cap B^r \geq \dim B\) for any abelian subvariety \(B\) (including if \(\dim Z\cap A^r = \dim Z\geq \dim A\)), then such an \(\tilde{\be}\) can not exist.  See Examples~\ref{ex:genus2} below, which illustrate why this condition is necessary.
    \item Without the assumption on the existence of \(\tilde{\be}\) (and preserving the assumptions that Conjecture~\ref{conj:GeneralizedMM} holds and that \(A\) has distinct isogeny factors), we prove that for \(\be\) outside a finite union of subgroups of rank at most \(r-1\), the intersection \(Z \cap \bigcup A^r[\be]\) is constrained to lie in finitely many torsion translates (see Theorem~\ref{thm:ConsequenceZP_general}\eqref{part:ConsZP_general_1} for the precise statement).  This result combined with Lemma~\ref{lem:TorsionTranslate} below then implies that as long as \(\dim Z \cap B^r < \dim B\) for all abelian subvarieties \(B \subseteq A\), there are proper subgroups \(M_1, \dots, M_s\) such that \(Z \cap \bigcup_{\be \not\in \cup_i M_i} A^r[\be] = \emptyset\).  However, even though each \(M_i\neq\Z^r\), the union \(\cup M_i\) may cover \(\Z^r\). 
    This can indeed happen.  Let \(r=2\), \(A\) be a simple abelian surface and let \(\tau \in A[2] \smallsetminus 0\) be a nontrivial \(2\)-torsion point.  Let \(Z \subset A^2\) be an integral curve containing the torsion points \(T_1 \colonequals (\tau, 0)\), \(T_2 \colonequals (0, \tau)\), and \(T_3 \colonequals (\tau, \tau)\), and not containing \((0,0)\).
    Then \(T_1 \in A^2[\be]\) if and only if \(e_1\) is even, \(T_2 \in A^2[\be]\) if and only if \(e_2\) is even, and \(T_3 \in A^2[\be]\) if and only if \(e_1\) and \(e_2\) have the same parity.  Hence \(Z \cap A^2[\be]\) is nonempty for every \(\be \in \Z^2\).
    \item If we remove both the assumption on \(\tilde{\be}\) and on distinct isogeny factors (while still assuming Conjecture~\ref{conj:GeneralizedMM}), then we prove that for all \(\mathbf{e}\) outside a proper \emph{Zariski closed subset} \(V\subset \mathbb{C}^r\), the intersection \(Z \cap \bigcup A^r[\be]\) is again constrained to lie in a finite union of torsion translates. However, if \(A\) has repeated isogeny factors, then there is greater variation in the possible torsion translates (if there are no repeated isogeny factors then only torsion translates of product subgroups arise), and this leads to more subtle variation in the vectors \(\mathbf{e}\) which contribute to nontrivial intersections.
\end{enumerate}
\end{remarks}

\begin{examples}\label{ex:genus2}\hfill
\begin{enumerate}[label=(\roman*)]
    \item Let \(C\) be a nice genus \(2\) curve and let \(D\in \Pic^1_C \smallsetminus {W^1_C}\) be an ineffective divisor of degree \(1\).  Let \(Z_C \colonequals \iota_{D}\left(W^{(1,1)}_C \right) \subset J_C^2\).  Since \(D\notin W^1_C\), we have \(0 \notin Z_C\). On the other hand, for any \(\be\in \Z^2\) with \(e_1e_2\neq0\), the restriction of \([\be]\) to \(Z_C\) gives a surjective map \(Z_C\to J_C\). This implies that \(Z_C\cap J_C^2[\be]\neq\emptyset\). (Indeed, since the torsion in \(J_C\) is dense, we additionally have that
    \(
        Z_C\cap \bigcup_{m} J_C^2[m\be]    
    \)
    is Zariski dense in \(Z_C\).) 
    In particular, the conclusion of Theorem~\ref{thm:ConsequenceZPDistinctIsogeny} fails. This does not contradict Theorem~\ref{thm:ConsequenceZPDistinctIsogeny} because \(\dim Z_C = 2 = \dim J_C\), and so no \(\tilde{\be}\) can exist (Remark~\ref{rem:thm_ZP}\ref{rem2}).
\item \label{ex:cover_genus2}
    Retain the setup from the previous example.
    In addition, let \(X\) be a smooth curve with a morphism \(\phi\) of degree \(f\) to the genus \(2\) curve \(C\). Assume that \(X\) has genus at least \(2f+1\).  
    Let \(A \colonequals J_X\) and let \(B \colonequals \phi^*J_C\). Assume that \(D\) is such that \(\phi^*D\in \Pic^f_X\) is also ineffective.  
    Let \(Z \colonequals \iota_{{\phi^*D}}(W^{(f,f)}_X) \subset A^2\). 
    Our assumption that \(\phi^*D \not\in W^f_X\) guarantees that
    \(0 \notin Z\).  Since \(\dim Z = 2f\), our assumption that \(X\) has genus at least \(2f+1\) guarantees that \(\dim Z < \dim A\).
    
    We have a commutative diagram
    \begin{center}
        \begin{tikzcd}
         \iota_{D}\left(W^{(1,1)}_C\right) = Z_C \arrow[r, hook] \arrow[d, "\phi^*"] & J_C^2 \arrow[d, "\phi^*"] & \\
             \iota_{{\phi^*D}}\left(W^{(f,f)}_X\right) = Z \arrow[r, hook] & J_X^2 = A^2. \\
        \end{tikzcd}
    \end{center}
    Hence,
    \( \phi^*\left(Z_C \cap J_C^2[\be]\right) \subset \phi^*(Z_C) \cap B^2[\be]  \subset Z \cap A^2[\be]\).
    In particular, for any \(\be\) for which \(e_1e_2 \neq 0\), we have
    \(Z \cap A^2[\be] \neq \emptyset\).  This is consistent with Theorem~\ref{thm:ConsequenceZPDistinctIsogeny} since \(\dim (Z \cap B^2) \geq \dim \phi^*(Z_C) = 2\) is not less than \(\dim B = 2\), and so no such \(\tilde{\be}\) can exist (Remark~\ref{rem:thm_ZP}\ref{rem2}).\hfill\(\righthalfcup\)
    \end{enumerate}
\end{examples}

\subsection*{Outline of Section~\ref{sec:MMConsequences}} 
If \(Z\) is a torsion translate of a product subgroup, then we may prove Theorem~\ref{thm:ConsequenceZPDistinctIsogeny} unconditionally (and regardless of any assumption on the isogeny factors of \(A\)); we do this in Section~\ref{sec:ZPTorsion} (Lemma~\ref{lem:TorsionTranslate}).  
In Section~\ref{sec:ProofOfConsequenceMM}, we prove Proposition~\ref{prop:consequenceMM}. In Section~\ref{sec:EZ}, we prove structural results about the sets \(\calE_Z\) defined in Definition~\ref{defn:calEZ}. We leverage these results in Section~\ref{sec:ProofOfConsequenceZP} to prove the general Theorem~\ref{thm:ConsequenceZP_general}, which implies Theorem~\ref{thm:ConsequenceZPDistinctIsogeny} as a special case. Finally, in Section~\ref{sec:RelationZP}, we show that the Zilber--Pink conjecture (Conjecture~\ref{conj:ZilberPink}) implies Conjecture~\ref{conj:GeneralizedMM}.

%%%%%%%%%%%%%%%%%%%%%%%%%%%%%%%%%%%%%%%%%%%%%%%%%%%%%%%%%%%%%%
\subsection{The case when \(Z\) is a nontrivial torsion translate}\label{sec:ZPTorsion}
\begin{lemma}\label{lem:TorsionTranslate}
    Let \(B \subset A\) be an abelian subvariety and let \(\tau \in A^r_{\tors}\).  Then
    \[
         M_{\tau + B^r}:=\{\be \in \Z^r : (\tau + B^r) \cap  A^{r}[\be] \neq \emptyset\}
    \]
    is a full rank subgroup of \(\Z^r\), which is a proper subgroup if and only if \(\tau \not\in B^r\).  Explicitly, when \(r=1\), we have \(M_{\tau + B^r} = \ord_{A/B}(\tau)\Z\).
\end{lemma}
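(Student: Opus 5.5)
The plan is to pass to the quotient abelian variety \(A/B\). Write \(\pi\colon A \to A/B\) for the quotient map and \(\bar\tau = \pi^r(\tau) \in (A/B)^r\), which is a torsion point.

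The first step is to show
\[
(\tau + B^r)\cap A^r[\be]\neq\emptyset \iff [\be]\tau\in B,
\]
i.e.\ \(\sum_i e_i\bar\tau_i = 0\) in \(A/B\). Suppose \(\tau+\mathbf{b}\) lies in \(A^r[\be]\). Then
\[
[\be]\tau = -[\be]\mathbf{b} \in B.
\]
Conversely, suppose \([\be]\tau = b\in B\). Since \(B\) is divisible, there is some \(b'\in B\) with \(e_1b' = -b\), assuming \(e_1\neq 0\). Put \(\mathbf{b} = (b',0,\dots,0)\). Then \([\be](\tau+\mathbf{b}) = 0\). If \(e_1 = 0\), use any index \(i\) with \(e_i\neq0\) in the same way. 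If \(\be = 0\), the condition holds trivially.

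It follows that \(M_{\tau+B^r}\) is the kernel of the homomorphism
\[
\phi\colon \Z^r\to A/B,\qquad \be\mapsto \sum_i e_i\bar\tau_i .
\]
Hence it is a subgroup. Since each \(\bar\tau_i\) is torsion, the image of \(\phi\) is a finite group. So the kernel has finite index, and therefore has full rank. Concretely, if \(N\) is a common multiple of the orders of the \(\bar\tau_i\), then \(N\Z^r\subseteq M_{\tau+B^r}\).

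Next, the kernel of \(\phi\) is proper exactly when some standard basis vector \(\mathbf{e}_i\) is not in it. This means \(\bar\tau_i\neq 0\) for some \(i\), i.e.\ \(\tau\notin B^r\). Conversely, if \(\tau\in B^r\), then every \(\bar\tau_i = 0\) and the kernel is all of \(\Z^r\).

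Finally, when \(r=1\), the kernel of \(e\mapsto e\bar\tau\) is \(\ord(\bar\tau)\Z\). This is exactly \(\ord_{A/B}(\tau)\Z\).

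The only step needing any care is the converse in the first equivalence. It relies on divisibility of \(B\) to absorb \([\be]\tau\) into a single coordinate, and on handling the indices where \(e_i = 0\). The rest is formal.
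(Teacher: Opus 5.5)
Your proof is correct and follows the same route as the paper. Both arguments reduce membership in \(M_{\tau+B^r}\) to the condition \([\be]\tau\in B\), and from that read off the subgroup, full-rank, properness and \(r=1\) statements. The paper gets this reduction from the identity \([\be](\tau+B^r)=[\be]\tau+B\), which silently uses the divisibility of \(B\) that you make explicit. Writing \(M_{\tau+B^r}\) as the kernel of \(\Z^r\to A/B\), \(\be\mapsto\sum_i e_i\bar\tau_i\), is only a slightly cleaner packaging of the same argument.
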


\begin{proof}
    By definition, \(\be \in M_{\tau + B^r}\) if and only if \([\be] (\tau + B^r) = [\be]\tau + B\) contains \(0 \in A\).  Hence \(\be \in M_{\tau + B^r}\) if and only if \([\be]\tau \in B\).  The set \(M_{\tau+B^r}\) is thus a subgroup.
    Since \(\tau\in A_{\tors}^r\), for each \(1\leq i\leq r\), there exists a positive integer \(m_i\) such that \(m_i\textup{pr}_i(\tau) \in B\) (here \(\textup{pr}_i\) denotes the \(i^{th}\) projection). In particular, for each \(1\leq i\leq r\), the vector
    \((0, \dots, 0, m_i, 0, \dots, 0)\) is contained in \(M_{\tau + B^r}\).  We conclude that the subgroup has full rank. If \(\tau \not\in B^r\), we know that some \(\textup{pr}_i(\tau) \not\in B\), and so \((0, \dots,0, 1, 0, \dots, 0) \not\in M_{\tau + B^r}\).  Thus, in this case, this subgroup is not all of \(\Z^r\). 

    The explicit description when \(r=1\) follows from the definition \(M_{\tau + B^r} = \{e \in \Z : e\tau \in B\}\).
\end{proof}

%%%%%%%%%%%%%%%%%%%%%%%%%%%%%%%%%%%%%%%%%%%%%%%%%%%%%%%%%%%%%%
\subsection{Consequence of the Manin--Mumford Conjecture}\label{sec:ProofOfConsequenceMM}
\begin{theorem*}[Manin--Mumford Conjecture, proved by Raynaud~\citelist{\cite{Raynaud-MMcurves}*{{Th\'eor\`eme} I} \cite{Raynaud-MMgeneral}*{Th\'eor\`eme, page 327} }]
    Let \(Z\subset A\) be a closed subvariety of an abelian variety. Then
    \[
       {Z\cap A_{\textup{tors}}} = \bigcup_{i=1}^s \tau_i + (B_i)_{\tors},
    \]
    where for all \(i\), \(\tau_i\in A_{\textup{tors}}\) and \(B_i < A\) is an abelian subvariety such that \(\tau_i + B_i\subset Z\).
\end{theorem*}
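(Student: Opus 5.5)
The statement is Raynaud's theorem, which the paper cites rather than proves; if a proof were required I would follow the o-minimal strategy of Pila--Zannier. It is an induction on \(\dim Z\). It suffices to show that if \(Z\) is irreducible and \(Z\cap A_{\tors}\) is Zariski dense in \(Z\), then \(Z\) is a torsion translate \(\tau+B\). Applying this to each irreducible component of the Zariski closure of \(Z\cap A_{\tors}\) then gives the displayed decomposition, with \(\tau_i+B_i\subset Z\). By replacing \(Z\) by \(Z/\mathrm{Stab}(Z)\) inside \(A/\mathrm{Stab}(Z)\), we may also assume \(Z\) has finite stabilizer. The aim is then to show \(Z\) is a point.

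First I would reduce to the case where \(A\) and \(Z\) are defined over a number field \(K\). This is a standard specialization argument: torsion points are algebraic, and a dense set of them forces \(Z\) to descend, possibly after an isogeny-free spreading-out argument.

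Next, uniformize \(\pi\colon \C^g\to A\) with lattice \(\Lambda\). Using real coordinates with respect to a basis of \(\Lambda\), identify a fundamental domain with \([0,1)^{2g}\). Under this identification, torsion points of order \(n\) correspond exactly to rational points with denominator \(n\). The set \(Y=\pi^{-1}(Z)\cap[0,1]^{2g}\) is definable in \(\mathbb{R}_{\mathrm{an}}\), because \(\pi\) restricted to a compact set is restricted analytic.

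Now compare two counts.
\begin{enumerate}[label=(\roman*)]
\item \emph{Lower bound.} By Masser's bound (or David's), a torsion point of order \(n\) has degree at least \(c\,n^{\delta}\) over \(K\). All its Galois conjugates lie on \(Z\) and also have order \(n\). So a torsion point of order \(n\) on \(Z\) produces at least \(c\,n^{\delta}\) rational points of height at most \(n\) on \(Y\).
\item \emph{Upper bound.} The Pila--Wilkie theorem says the transcendental part \(Y\smallsetminus Y^{\mathrm{alg}}\) contains at most \(C_\epsilon n^{\epsilon}\) rational points of height at most \(n\).
\end{enumerate}
Taking \(\epsilon<\delta\), all but finitely many torsion points of \(Z\) come from points of \(Y^{\mathrm{alg}}\), the union of positive-dimensional connected semialgebraic subsets of \(Y\).

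The main obstacle is to describe \(Y^{\mathrm{alg}}\). This is the Ax--Lindemann step. A maximal real semialgebraic arc in \(\pi^{-1}(Z)\) is contained in a complex algebraic subvariety of \(\C^g\) lying in \(\pi^{-1}(Z)\). Ax's theorem for the abelian exponential then says the Zariski closure of its image is a translate of a positive-dimensional abelian subvariety contained in \(Z\). I would cite Ax rather than reprove it.

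Finally, such translates \(x+B\subset Z\) come in finitely many families, one for each of finitely many \(B\), by a definability and countability argument. Each family is swept out by a lower-dimensional subvariety of \(A/B\). Applying the inductive hypothesis there, together with the finite-stabilizer assumption, shows these families cannot carry a Zariski dense set of torsion points unless \(Z\) itself is a translate. Hence \(Z\) is a point if its stabilizer is finite, and in general \(Z\) is a torsion coset, which completes the induction.
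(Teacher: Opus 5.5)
The paper does not prove this statement. It imports Raynaud's theorem, whose original proof is \(p\)-adic, as a black box and uses it only through Proposition~\ref{prop:consequenceMM}. Your outline is instead the Pila--Zannier proof, and as an outline it is sound. Its ingredients are:
\begin{itemize}
\item Masser's lower bound for Galois orbits of torsion points, played against the Pila--Wilkie bound on the definable set \(Y\);
\item Ax--Lindemann, to show that \(Y^{\mathrm{alg}}\) lies over positive-dimensional cosets contained in \(Z\);
\item finiteness of the relevant abelian subvarieties, via ``definable and countable implies finite'';
\item induction through \(A/B\).
\end{itemize}
Citing Raynaud costs nothing and gives the statement over any algebraically closed field of characteristic \(0\), which is what Section~\ref{sec:MMConsequences} (working over \(\C\)) formally uses. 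Your route gives the method behind most later unlikely-intersection results, so it is the natural toolkit for the paper's Conjecture~\ref{conj:GeneralizedMM}. It does not transfer verbatim there, however: points of \(Z\cap A^r[\be]\) need not be torsion, so they come with no Masser-type lower bound on their Galois orbits.

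Three points deserve tightening.
\begin{itemize}
\item \emph{Descent to a number field.} As written, this is justified only when \(A\) is defined over \(\overline{\Q}\). Then the torsion points are \(\overline{\Q}\)-points, and the Zariski closure of a set of \(\overline{\Q}\)-points is defined over \(\overline{\Q}\). For \(A\) with transcendental moduli you need a genuine specialization argument, which ``isogeny-free spreading out'' does not supply. This is harmless for the paper, since every \(Z\) to which Proposition~\ref{prop:consequenceMM} is applied sits in \(J_C/H\) with \(C\) over a number field.
\item \emph{Order of the counting and Ax steps.} The count only shows that \emph{some} Galois conjugate of each torsion point of large order lifts into \(Y^{\mathrm{alg}}\). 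Do the Ax step first. Then use that the union of positive-dimensional cosets contained in \(Z\) is \(\mathrm{Gal}(\overline{K}/K)\)-stable to conclude that the point itself lies in such a coset.
\item \emph{Finiteness of the \(B\).} The finiteness must be of \emph{maximal} \(B\), since \(Z\) can contain translates of infinitely many abelian subvarieties (a coset of \(E\times E\) contains translates of infinitely many elliptic curves). Run the definability argument on the set of complex subspaces \(L\) that are maximal among those with \(z+L\subset\pi^{-1}(Z)\) for some \(z\).
  \begin{itemize}
  \item The Zariski closure of the subgroup \(\pi(L)\) is an abelian subvariety \(B\) with \(\pi(z)+B\subset Z\), so maximality forces \(L=\mathrm{Lie}(B)\).
  \item This definable set is therefore countable, hence finite.
  \item Then \(V_B=\{x : x+B\subset Z\}\) has image in \(A/B\) of dimension at most \(\dim Z-\dim B<\dim Z\), and your induction goes through.
  \end{itemize}
\end{itemize}
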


\begin{proof}[Proof of Proposition~\ref{prop:consequenceMM}]
    By treating each irreducible component separately (and taking the (finite) union of the corresponding lists of integers) the statement is equivalent to the case when \(Z\) is irreducible.
    We will prove this by induction on the dimension of \(Z\).  
    If \({Z \cap A_{\tors}}\) is not Zariski dense in \(Z\), then, replacing \(Z\) with each component of the Zariski closure of \({Z \cap A_{\tors}}\), we are done by induction.
    If \({Z \cap A_{\tors}}\) is Zariski dense in  \(Z\), then, by the Manin--Mumford conjecture, \(Z\) is a torsion translate \(Z = \tau + B\) of an abelian subvariety \(B < A\).  Since \(0 \notin Z\), we have \(\tau \not\in B\) and so \(\ord_{A/B}(\tau) > 1\). The result therefore follows from Lemma~\ref{lem:TorsionTranslate}, letting \(m_1 = \ord_{A/B}(\tau)\).
\end{proof}

\subsection{The structure of \(\calE_Z\)}\label{sec:EZ}

We begin by refining the definition of the exceptional set \(\calE_Z\).

\begin{defn}\label{defn:EZB}
    Given an abelian subvariety \(B \subset A\), we define
    \[\calE_Z^B \colonequals \{ \be \in \Z^r : [\be]\langle Z \rangle^\circ \subseteq B\}.\]
\end{defn}

It is immediate from \eqref{eq:calEZ_alternate} in Definition~\ref{defn:calEZ} that \(\calE_Z = \bigcup_{B \subset A \atop \dim B \leq \dim Z} \calE_Z^B\).  If \(A\) is simple and \(\dim Z  < \dim A\), then \(\calE_Z = \calE_Z^{\{0\}}\).

Suppose that \(Z \subset A^r_{\tors}+ B^r\) for some abelian subvariety \(B \subset A\).  Then \(\langle Z \rangle^\circ \subset B^r\) and so \([\be]\langle Z \rangle^\circ \subset [\be]B^r \subset B\) for all \(\be \in \Z^r\).  Consequently, in this case, \(\calE_Z^B = \Z^r\).  The following shows that outside of this case, \(\calE_Z^B\) is a subgroup of smaller rank.

\begin{lemma}\label{lem:EZB_subgroup}
    Let \(A\) be an abelian variety, let \(B \subset A\) be an abelian subvariety and let \(Z\subset A^r\) be an integral subvariety.  Then \(\calE_Z^B \subset \Z^r\) is a  subgroup. 
    If \(Z\) is not contained in \(A^r_{\tors} + B^r\), then \(\calE_Z^B\) has rank strictly less than \(r\).
\end{lemma}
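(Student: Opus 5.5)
Write \(H \colonequals \langle Z\rangle^\circ\), an abelian subvariety of \(A^r\). The subgroup statement is immediate from additivity. If \(\be,\be'\in\calE_Z^B\), then for every \(h\in H\) we have \([\be-\be'](h) = [\be](h)-[\be'](h)\in B\), because \(\be\mapsto[\be]\) is additive in \(\be\) and \(B\) is a subgroup. Also \(0\in\calE_Z^B\), so \(\calE_Z^B\) is a subgroup of \(\Z^r\).

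For the rank statement, argue by contrapositive: suppose \(\calE_Z^B\) has rank \(r\). Then it contains \(N\Z^r\) for some positive integer \(N\), so in particular \(N\be_i\in\calE_Z^B\) for each standard basis vector \(\be_i\). This means \(N\pr_i(H)\subseteq B\) for every \(i\). Since \(\pr_i(H)\) is an abelian subvariety and multiplication by \(N\) is surjective on it, we get \(\pr_i(H) = N\pr_i(H)\subseteq B\). Hence \(H\subseteq B^r\).

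It remains to pass from \(H\) to \(Z\). The group \(\langle Z\rangle\) is an algebraic subgroup of \(A^r\), so its identity component \(H\) has finite index in it. Therefore \(\langle Z\rangle\) is a finite union of cosets \(t+H\), and each \(t\) can be taken torsion because \(\langle Z\rangle/H\) is finite. It follows that
\[
Z\subseteq \langle Z\rangle\subseteq A^r_{\tors}+H\subseteq A^r_{\tors}+B^r,
\]
which is the contrapositive of the claim. The only point requiring care is the structure of \(\langle Z\rangle\): as the smallest algebraic subgroup containing \(Z\), it is a closed subgroup scheme of \(A^r\), hence has finitely many components, each a translate of \(H\) by a torsion point. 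This is standard, and I don't expect any genuine obstacle.
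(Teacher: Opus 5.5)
Your proof is correct and has the same structure as the paper's: additivity of \(\be\mapsto[\be]\) gives the subgroup property, a full-rank \(\calE_Z^B\) forces \(\langle Z\rangle^\circ\) into \(B^r\) up to torsion, and the fact that the components of \(\langle Z\rangle\) are torsion translates of \(\langle Z\rangle^\circ\) finishes the argument. The one variation is in the middle step: the paper passes to \((A/B)^r\) and invokes Lemma~\ref{lem:r_lin_independent}, whereas you use \(N\Z^r\subseteq\calE_Z^B\) together with divisibility of the abelian subvarieties \(\pr_i(\langle Z\rangle^\circ)\) to get the slightly sharper conclusion \(\langle Z\rangle^\circ\subseteq B^r\) directly. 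Your choice of torsion coset representatives also implicitly uses divisibility of \(\langle Z\rangle^\circ\), not just finiteness of \(\langle Z\rangle/\langle Z\rangle^\circ\); this is the same standard fact the paper relies on.
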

\begin{proof}
    If \(\be_1, \be_2 \in \calE_Z^B\), then, since the map \([\be_1 + \be_2] \colon A^r \to A\) factors as
    \[A^r \xrightarrow{[\be_1] \times [\be_2]} A \times A \xrightarrow{+} A,\]
    we have
    \[[\be_1 + \be_2] \langle Z \rangle^\circ \subset [\be_1]\langle Z \rangle^\circ + [\be_2]\langle Z \rangle^\circ \subset B + B = B.\]
    Thus \(\be_1 + \be_2 \in \calE_Z^B\).  Similarly, we have \(n \be_1 \in \calE_Z^B\) for any \(n \in \Z\).  Hence \(\calE_Z^B\) is a subgroup of \(\Z^r\).  
    
    If \(\be \in \calE_Z^B\), then \([\be]\langle Z \rangle^\circ \mod{B}\) is zero in \(A/B\).  Hence \(\langle Z \rangle^\circ \mod B^r\) is contained in \((A/B)^r[\be]\).
    If \(\calE_Z^B\) has rank \(r\), then, by Lemma~\ref{lem:r_lin_independent} below, \(\langle Z \rangle^\circ \mod B^r \subset (A/B)^r_{\tors}\), and so \(\langle Z \rangle^\circ  \subset A^r_{\tors} + B^r\).  Since every component of \(\langle Z \rangle\) is a torsion translate of the identity component \(\langle Z \rangle^\circ\), we have \(\langle Z \rangle  \subset A^r_{\tors} + B^r\).  Consequently, \(Z \subset \langle Z \rangle \subset A^r_{\tors} + B^r\).
    \end{proof}

\begin{lemma}\label{lem:r_lin_independent}
    For \(\Q\)-linearly independent \(\be_1, \dots, \be_r \in \Z^r\), the intersection \(A^r[\be_1]\cap  \dots \cap A^r[\be_r]\) is contained in \(A^r_{\tors}\) (and, in particular, is \(0\)-dimensional).
\end{lemma}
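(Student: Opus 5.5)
Form the \(r\times r\) integer matrix \(E\) whose rows are \(\be_1,\dots,\be_r\). Since the \(\be_j\) are \(\Q\)-linearly independent, \(E\) is invertible over \(\Q\), so \(N\colonequals \det E\) is a nonzero integer. The idea is to use the adjugate to turn the system of linear conditions into a bound on the torsion order of each coordinate. Both \(E\) and its adjugate act on \(A^r\) through the usual action of \(\mat_r(\Z)\) on \(A^r\). Here \(A\) is viewed as a \(\Z\)-module, so any integer matrix acts as an endomorphism of \(A^r\).

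First, observe that \(p=(p_1,\dots,p_r)\) lies in \(A^r[\be_1]\cap\dots\cap A^r[\be_r]\) exactly when \(\sum_i (\be_j)_i p_i=0\) for every \(j\). In matrix terms, this says \(E\cdot p = 0\) in \(A^r\). Next, let \(\operatorname{adj}(E)\in\mat_r(\Z)\) be the adjugate, so that \(\operatorname{adj}(E)E = N\cdot I_r\). Applying \(\operatorname{adj}(E)\) gives
\[
0=\operatorname{adj}(E)(E p) = (\operatorname{adj}(E)E)p = Np.
\]
The middle equality holds because the action of \(\mat_r(\Z)\) on \(A^r\) is compatible with matrix multiplication. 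Hence \(Np_i=0\) for every \(i\), so \(p\in A^r[N]\subset A^r_{\tors}\).

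For the parenthetical claim, note that \(A^r[N]\) is finite because multiplication by the nonzero integer \(N\) is an isogeny of \(A^r\). So the intersection is a closed subgroup scheme contained in a finite set, and it is therefore \(0\)-dimensional.

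There is no real obstacle. The only point needing care is that the adjugate identity is applied to the \(\Z\)-module \(A^r\), which is legitimate since it is an identity in \(\mat_r(\Z)\). We also need \(N\neq 0\), and this is exactly what \(\Q\)-linear independence provides.
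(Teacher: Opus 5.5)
Your proposal is correct and is essentially the paper's proof: the paper also forms the matrix \(E\) with rows \(\be_1,\dots,\be_r\), uses \(\det(E)\neq 0\), and multiplies by the adjugate to get \(\det(E)\,z_i=0\) for every coordinate. Your additional remark that \(A^r[N]\) is finite because multiplication by \(N\) is an isogeny spells out the \(0\)-dimensionality that the paper states without comment.
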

\begin{proof}
    Let \(E \in M_r(\Z)\) be the matrix whose rows are \(\be_1, \dots, \be_r\).  Then \((z_1, \dots, z_r) \in A^r[\be_1]\cap A^r[\be_2]\cap \dots \cap A^r[\be_r]\) if and only if \(E (z_1, \dots, z_r)^T = \mathbf{0}.\) By assumption \(E\) is a matrix with full rank, so \(\det(E)\neq 0\). If \(E(z_1, \dots, z_r)^T = \mathbf{0}\), then 
    \[(\textup{Adj}(E)E)(z_1, \dots, z_r)^T =  (\det(E)z_1, \dots, \det(E)z_r)^T = \mathbf{0},\] 
    and so \(A^r[\be_1]\cap A^r[\be_2]\cap \dots \cap A^r[\be_r] \subset A^r_{\tors}\) is \(0\)-dimensional. 
\end{proof}

\begin{cor}\label{cor:EZ_eq_Zr}
    Let \(A\) be an abelian variety with distinct isogeny factors and let \(Z \subset A^r\) be an integral subvariety.  Then \(\calE_Z  = \Z^r\) if and only if there exists an abelian subvariety \(G \subset A\) with \(\dim G \leq \dim Z\) such that \(Z \subset A^r_{\tors} + G^r\).
\end{cor}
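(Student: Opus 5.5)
The plan is to use the decomposition \(\calE_Z = \bigcup_{B \subset A,\ \dim B \leq \dim Z} \calE_Z^B\) noted after Definition~\ref{defn:EZB}, and to show that the hypothesis on \(A\) makes this union finite. Lemma~\ref{lem:EZB_subgroup} then forces one of the pieces to be all of \(\Z^r\) whenever the union is.

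\emph{The easy direction.} Suppose \(Z \subset A^r_{\tors} + G^r\) with \(\dim G \leq \dim Z\). By the observation preceding Lemma~\ref{lem:EZB_subgroup}, we have \(\langle Z\rangle^\circ \subset G^r\). Hence \([\be]\langle Z\rangle^\circ \subset G\) for every \(\be\), so \(\calE_Z^G = \Z^r\). Since \(\dim G \leq \dim Z\), this piece appears in the union, and so \(\calE_Z = \Z^r\). This direction does not use the isogeny hypothesis.

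\emph{Finiteness of abelian subvarieties.} Write \(A \sim A_1 \times \cdots \times A_n\) with the \(A_i\) simple and pairwise non-isogenous. For each \(i\), the \(A_i\)-isotypic part of \(A\) is a simple abelian subvariety \(A_i' \subset A\), because the multiplicity is one. Any nonzero homomorphism from a simple abelian variety isogenous to \(A_i\) into \(A\) has image equal to \(A_i'\), since \(\operatorname{Hom}(A_i, A_j) = 0\) for \(j \neq i\).

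Now let \(B \subset A\) be an abelian subvariety. By Poincaré reducibility, \(B\) is isogenous to a product of some subset of the \(A_i\). It follows that \(B = \sum_{i \in I} A_i'\) for some subset \(I \subseteq \{1,\dots,n\}\). Indeed, the sum is contained in \(B\), and both are abelian subvarieties of the same dimension. Hence \(A\) has only finitely many (at most \(2^n\)) abelian subvarieties, and so \(\calE_Z\) is a finite union of the subgroups \(\calE_Z^B\).

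\emph{The main direction.} Suppose \(\calE_Z = \Z^r\), and suppose for contradiction that \(Z \not\subset A^r_{\tors} + B^r\) for every \(B\) with \(\dim B \leq \dim Z\). By Lemma~\ref{lem:EZB_subgroup}, each \(\calE_Z^B\) is then a subgroup of rank \(< r\). Such a subgroup spans a proper \(\Q\)-subspace of \(\Q^r\), so it lies in a hyperplane \(H_B\).

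This would mean \(\Z^r\) is contained in a finite union of hyperplanes of \(\Q^r\). That is impossible: pick a nonzero linear form \(\ell_B\) vanishing on each \(H_B\). The product \(\prod_B \ell_B\) is a nonzero polynomial, and a nonzero polynomial cannot vanish on all of \(\Z^r\). Therefore some \(B = G\) with \(\dim G \leq \dim Z\) satisfies \(Z \subset A^r_{\tors} + G^r\), as claimed.

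The step I expect to need the most care is the finiteness of the set of abelian subvarieties. This is exactly where the distinct-isogeny-factor hypothesis enters. Without it, there are infinitely many \(B\), and the union of the rank-deficient subgroups \(\calE_Z^B\) could a priori cover \(\Z^r\).
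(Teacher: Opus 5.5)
Your proof is correct and follows the paper's argument. In both, the distinct-isogeny-factor hypothesis gives only finitely many abelian subvarieties, so \(\calE_Z\) is a finite union of the subgroups \(\calE_Z^B\). Lemma~\ref{lem:EZB_subgroup} then makes each piece rank-deficient unless \(Z\subset A^r_{\tors}+B^r\), and a finite union of rank \(<r\) subgroups cannot cover \(\Z^r\). You also supply two justifications the paper takes for granted: Poincaré reducibility for the finiteness of abelian subvarieties, and the product-of-linear-forms argument for the non-covering.
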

\begin{proof}
    Since \(A\) has distinct isogeny factors, it contains only finitely many abelian subvarieties. Let \(G_1, \dots, G_s \subset A\) be the abelian subvarieties with \(\dim G_i \leq \dim Z\). We therefore have \(\calE_Z = \bigcup_{i=1}^s \calE^{G_i}_Z\). By Lemma~\ref{lem:EZB_subgroup},  \(\calE_Z^{G_i}\) is a subgroup of rank at most \(r-1\) unless \(Z \subset A^r_{\tors} + G_i^r\).  Since a finite union of subgroups of rank at most \(r-1\) cannot equal \(\Z^r\), if \(\calE_Z = \Z^r\), then \(Z \subset A^r_{\tors} + G_i^r\) for some \(i\).  Conversely, if \(Z \subset A^r_{\tors} + G_i^r\), then \(\langle Z \rangle^\circ \subset G_i^r\) and so \([\be]\langle Z \rangle^\circ \subset [\be]G_i^r \subset G_i\) for all \(\be \in \Z^r\).  Hence, \(\calE_Z^{G_i} = \Z^r\).
\end{proof}

\begin{remark}
    Corollary~\ref{cor:EZ_eq_Zr} can fail if \(A\) does not have distinct isogeny factors, as the following example shows.
\end{remark}

\begin{example}
    Let \(E\) be an elliptic curve and let \(A = E^3\).  Consider the abelian subvariety \(H \simeq E^3\) of \(A^3\) defined by
    \[H \colonequals \{((p, q, 0), (r, 0, q), (0, -r, p)) : p, q, r \in E\}.\]
    For any \(\be = (e_1, e_2, e_3) \in \Z^3 \smallsetminus \{\mathbf{0}\}\), the image \([\be]H \subset A = E^3\) is contained in the subgroup \(E^3[(e_3, e_2, -e_1)]\).  Consequently, \(\dim [\be]H \leq 2\) for all \(\be \in \Z^3\).

    Let \(Z \subset H\) be a generic integral surface.  Hence \(\langle Z \rangle^\circ = H\) and so we have \(\calE_Z = \{\be \in \Z^3 : \dim [\be]H \leq 2\} = \Z^3\).  We will show that there is no abelian subvariety \(G \subset A\) with \(\dim G \leq 2\) such that \(Z \subset A^3_{\tors} + G^3\).  
    
    For any torsion point \(\tau\in A^r_{\tors}\), the algebraic groups generated by \(Z\) and \(Z - \tau\) have the same identity component. Thus, if \(Z\subset \tau + G^3\), then \(H = \langle Z \rangle^0 = \langle Z - \tau \rangle^0 \subset G^3\), which then implies that \(H_i\colonequals \pi_i(H)\subset G\) for all \(i=1,2,3\).
    Since each \(H_i\) is a distinct abelian surface in \(A=E^3\) (namely, the surface where the \((4-i)\)th coordinate is \(0\)), we have \(\langle H_1, H_2, H_3\rangle = A\).  As such, there is no proper abelian subvariety \(G \subset A\) such that \(H \subset G^3 \subset A^3\).
\end{example}

\begin{prop}\label{prop:Zariski_closed}
    Let \(A\) be an abelian variety and let \(Z \subset A^r\) be an integral subvariety.  If \(\calE_Z \neq \Z^r\), then there exists a proper Zariski closed subvariety \(V \subsetneq \C^r\) such that \(\calE_Z \subset V \cap \Z^r\).
\end{prop}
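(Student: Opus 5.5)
We will realize \(\calE_Z\) as the set of integer points of a determinantal locus. By \eqref{eq:calEZ_alternate}, \(\be\in\calE_Z\) if and only if \(\dim [\be]\langle Z\rangle \le \dim Z\). Since \(\langle Z\rangle\) and \(\langle Z\rangle^\circ\) have images of the same dimension, this is the same as \(\dim [\be]\langle Z\rangle^\circ\le \dim Z\). Set \(H\colonequals\langle Z\rangle^\circ\), an abelian subvariety of \(A^r\). The map \([\be]|_H\colon H\to A\) is a homomorphism of abelian varieties, so the dimension of its image equals the rank of its differential at the origin, \(d[\be]|_H\colon T_0H\to T_0A\).

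The plan is to compute this differential and observe that it depends linearly on \(\be\in\C^r\). Write \(T_0A^r=(T_0A)^{r}\). The differential of \([\be]\colon A^r\to A\) is the linear map
\[
(v_1,\dots,v_r)\mapsto \textstyle\sum_i e_iv_i .
\]
Fix a basis of \(T_0H\subset (T_0A)^r\) and a basis of \(T_0A\). In these bases the differential is represented by a matrix \(N(\be)\) of size \(g\times h\), where \(g=\dim A\) and \(h=\dim H\). Its entries are linear forms in \(e_1,\dots,e_r\), so this formula makes sense for every \(\be\in\C^r\). Thus
\[
\calE_Z=\{\be\in\Z^r:\rank N(\be)\le \dim Z\}.
\]
Let \(V\subset\C^r\) be the common zero locus of all \((\dim Z+1)\)-minors of \(N(\be)\). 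This is a Zariski closed subset, and by construction \(\calE_Z= V\cap\Z^r\), which in particular gives the claimed containment.

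It remains to show that \(V\neq\C^r\). Suppose instead that \(V=\C^r\). Then every minor vanishes identically as a polynomial, so in particular it vanishes at every integer point, giving \(\calE_Z=\Z^r\). This contradicts the hypothesis \(\calE_Z\neq\Z^r\). Hence \(V\) is proper.

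The only point needing care is the identification of \(\dim[\be]H\) with the rank of the differential. This holds because we are in characteristic \(0\): the image of a homomorphism of abelian varieties is an abelian subvariety whose tangent space is the image of the differential. The same argument works with \(\langle Z\rangle\) replaced by its identity component, which is why we pass to \(H\) at the start. I do not expect any step to be a real obstacle; the key observation is simply that linearity in \(\be\) turns the dimension condition into a polynomial one.
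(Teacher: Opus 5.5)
Your proof is correct and is essentially the paper's argument. The paper linearizes via the complex uniformization \(A(\C)\simeq \C^g/\Lambda\): it identifies \(\langle Z\rangle^\circ\) with a subspace \(S\subset(\C^g)^r\) on which \([\be]\) acts by left multiplication by the row vector \(\be\), which is exactly your differential on \(T_0H\). It then takes the \((\dim Z+1)\)-minors of a matrix whose entries are linear in \(\be\), just as you do.
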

\begin{proof}
    We have \(A(\C) \simeq \C^g / \Lambda\), where \(\Lambda \subset \C^g\) is a lattice of rank \(2g\).  The abelian subvariety \(\langle Z \rangle^\circ \subset A^r\) corresponds to a subspace \(S \subset (\C^{g})^r\) such that \(\rk(S \cap \Lambda^r) = 2 \dim S\).  We will represent elements of \((\C^g)^r\) as \(r \times g\) matrices. The map of abelian varieties \([\be] \colon A^r \to A\) corresponds to the linear map \(\mat(r, g) \to \C^g\) given by multiplication by the row matrix \([\be]\) on the left.
    
    The condition that \(\dim [\be]\langle Z \rangle^\circ \leq \dim Z\) is equivalent to the condition that the image subspace \([\be]S\) has dimension at most \(\dim Z\). Let \(s_1, \dots, s_k\) be a basis of \(S\).  Then \(\dim [\be]S \leq \dim Z\) if and only if all of the \((\dim Z+1)\)-minors of the matrix with rows \([\be]s_1, \dots, [\be]s_k\) vanish.  This either holds identically for all \(\be \in \C^r\), or the \(\be\) for which it holds are contained in proper Zariski closed \(V \subset \C^r\).
\end{proof}

\begin{example}
    Let \(B\) be a simple abelian variety and let \(A = B^3\). We will take \(Z\subset A^3\) to be a generic hypersurface in the subgroup \(H\subset A^3\) that is the image of
    \[
        B^3 \stackrel{\psi}{\hookrightarrow} A^3 = B^9, (b_1,b_2,b_3) \mapsto (b_2,b_1 + b_2,b_3)\times (b_1,0, b_3)\times (b_3, b_2, b_1 + b_3).
    \]
    We claim that for such \(Z\) the conclusion of Proposition~\ref{prop:Zariski_closed} is as strong as possible.

    From the definition of \(\calE_Z\) and the fact that every proper algebraic subgroup of \(A\) has dimension at most \(2\dim B\), we have the following sequence of equalities:
    \[
    \calE_Z = \left\{\be : \dim Z = \dim A - 1 {\geq} \dim [\be]\langle Z\rangle\right\} = \left\{\be : \dim [\be]H \leq 2\dim B\right\} = \left\{\be :  [\be]H \subsetneq A\right\}
    \]

    The subgroup \([\be] H\) is the image of the composition \(B^3\xrightarrow{\psi}A^3 \xrightarrow{[\be]} B^3\), which can be represented by the matrix equation
    \[
    \begin{bmatrix} b_1\\b_2\\b_3\end{bmatrix} \mapsto 
    e_1\begin{bmatrix} b_2\\b_1 + b_2\\b_3\end{bmatrix} + 
    e_2\begin{bmatrix} b_1\\0\\b_3\end{bmatrix} + 
    e_3\begin{bmatrix} b_3\\b_2\\b_1 + b_3\end{bmatrix} =
    \begin{bmatrix} 
    e_2 & e_1 & e_3\\
    e_1 & e_1 + e_3 & 0\\
    e_3 & 0 & e_1 +e_2 + e_3
    \end{bmatrix}\begin{bmatrix} b_1\\b_2\\b_3\end{bmatrix}.
    \]
    In particular, \([\be]H \subsetneq A\) if and only if 
    \[
    \det\left(    \begin{bmatrix} 
    e_2 & e_1 & e_3\\
    e_1 & e_1 + e_3 & 0\\
    e_3 & 0 & e_1 +e_2 + e_3
    \end{bmatrix}\right) = (e_2)(e_1 + e_3)(e_1 + e_2 + e_3) - e_1^2(e_1 + e_2 + e_3) - e_3^2(e_1 + e_3) =0,
    \]
    so set of such \(\mathbf{e}\) are the integral points on the cone over a genus \(1\) curve \(C_0\). This curve \(C_0\) also evidently has rational points (set \(e_3 = 0\) and solve) and so is isomorphic to its Jacobian. A \texttt{Magma}~\cite{Magma} computation shows that the Jacobian has short Weierstrass equation \(y^2 = x^3 - 2x\) and has infinitely many rational points~\cite{LMFDB}*{\href{https://www.lmfdb.org/EllipticCurve/Q/256/b/1}{256b1}}. Thus, \(\calE_Z\) cannot be contained in a finite union of proper subgroups of rank at most \(2\), but is equal to the integral points on a Zariski closed subvariety.
\end{example}

\subsection{The general case}\label{sec:ProofOfConsequenceZP}

\begin{thm}\label{thm:ConsequenceZP_general}
    Let \(A\) be an abelian variety and let \(Z\subset A^r\) be an integral subvariety such that \(\dim Z < \dim A\). 
Assume Conjecture~\ref{conj:GeneralizedMM}.  
    \begin{enumerate}
    \item \label{part:ConsZP_general_1} There exists a proper Zariski closed \(V \subset \C^r\) and torsion translates \(\tau_1 + H_1, \dots, \tau_n + H_n\) satisfying 
    \begin{enumerate}
        \item \(\dim Z \cap (\tau_j  + H_j) \geq \dim [\be]H_j\) for all \(\be \in \Z^r\) and all \(1 \leq j \leq n\),
        \item \( Z \cap \bigcup_{\be\in \Z^r \smallsetminus V} A^r[\be] \subset \bigcup_{j=1}^n \tau_j  + H_j\).
    \end{enumerate}
    If, furthermore, \(A\) has distinct isogeny factors, then \(V\) may be chosen so that \(V \cap \Z^r\) is a union of subgroups \(M_1, \dots, M_t\) of rank at most \(r-1\) and \(H_j = B_j^{r}\) for some \(B_j\) an abelian subvariety of \(A\).
    \item\label{part:ConsZP_general_2} Suppose that \(A\) has distinct isogeny factors, which implies that \(H_j = B_j^r\).  If there exists an \(\tilde\be\in \Z^r\smallsetminus\{0\}\) such that \([\tilde{\be}]\tau_j \not\in B_j\) for all \(1 \leq j \leq n\), then there exist rank \(r\) proper subgroups \(M_{t+1}, \dots, M_{t+n}\) with \(\tilde{\be} \not\in \cup_{i=t+1}^{{t+n}} M_i\) (which implies \(\cup_{i=1}^{t+n} M_i\subsetneq \Z^r\)) such that
 \[
     Z \cap \bigcup_{\be\notin \cup_{i=1}^{{t+n}} M_i} A^r[\be] = \emptyset.
 \]
\end{enumerate}
\end{thm}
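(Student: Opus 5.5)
We argue by Noetherian induction on \(Z\), following the proof of Proposition~\ref{prop:consequenceMM}. The statement is proved for all integral subvarieties \(W\) of \(A^r\) with \(\dim W<\dim A\), by induction on \(\dim W\). The output for a reducible set is the union of the outputs for its components: a finite union of proper Zariski closed sets \(V\) and a finite list of torsion translates.

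Fix \(Z\) and let \(\overline{Y}\) be the Zariski closure of \(Y\colonequals Z\cap\bigcup_{\be\notin\calE_Z}A^r[\be]\). By Conjecture~\ref{conj:GeneralizedMM}, \(\overline{Y}\subsetneq Z\), so each component of \(\overline{Y}\) has smaller dimension. We apply induction to the components \(W\) of \(\overline{Y}\). This requires a compatibility check. For \(\be\notin \calE_Z\), every point of \(Z\cap A^r[\be]\) lies in \(Y\subset \overline Y\). Hence such a point lies in \(W\cap A^r[\be]\) for some component \(W\), and induction covers it once \(\be\notin V_W\). We then set \(V=V_Z\cup\bigcup_W V_W\), where \(V_Z\) is a proper Zariski closed set containing \(\calE_Z\).

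Two cases arise.
\begin{enumerate}
\item If \(\calE_Z\neq\Z^r\), Proposition~\ref{prop:Zariski_closed} supplies \(V_Z\). When \(A\) has distinct isogeny factors, \(\calE_Z=\bigcup_{B}\calE_Z^B\) is a finite union of subgroups. These have rank \(<r\) by Lemma~\ref{lem:EZB_subgroup}, since otherwise Corollary~\ref{cor:EZ_eq_Zr} would give \(\calE_Z=\Z^r\).
\item If \(\calE_Z=\Z^r\), then \(\dim[\be]\langle Z\rangle\le\dim Z\) for all \(\be\). We take \(V_Z=\emptyset\) and add the single torsion translate \(\tau+H\colonequals\) a component of \(\langle Z\rangle\) containing \(Z\). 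Condition (a) then holds, because \(\dim Z\cap(\tau+H)=\dim Z\ge\dim[\be]H\), and \(Z\) is covered trivially. With distinct isogeny factors, Corollary~\ref{cor:EZ_eq_Zr} gives \(Z\subset \tau+G^r\) with \(\dim G\le\dim Z\), so we may use \(H=G^r\) instead. Condition (a) persists, since \(\dim[\be]G^r\le\dim G\le\dim Z\).
\end{enumerate}

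The induced translates \(\tau_j+H_j\) from components \(W\) satisfy (a) for \(W\), hence for \(Z\supset W\). The union of the \(V\)'s is again proper Zariski closed. In the isogeny case, their integral points form a finite union of rank \(\le r-1\) subgroups. The hypothesis \(\dim W<\dim A\) is inherited.

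For part~\eqref{part:ConsZP_general_2}, set \(M_{t+j}=M_{\tau_j+B_j^r}\) as in Lemma~\ref{lem:TorsionTranslate}. This is a full rank subgroup, and \(\be\in M_{t+j}\) iff \([\be]\tau_j\in B_j\). Since \([\tilde\be]\tau_j\notin B_j\), we get \(\tilde\be\notin M_{t+j}\), so each \(M_{t+j}\) is proper. For \(\be\) outside \(\bigcup_{i\le t+n}M_i\), part~\eqref{part:ConsZP_general_1} puts any point of \(Z\cap A^r[\be]\) in some \(\tau_j+B_j^r\). That point would then make \(\be\in M_{t+j}\), a contradiction, so the intersection is empty.

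It remains to show the union is proper. The subgroups \(M_1,\dots,M_t\) have rank \(<r\), while the \(M_{t+j}\) have finite index. Let \(N=\bigcap_j M_{t+j}\), a finite index subgroup. The coset \(\tilde\be+N\) avoids every \(M_{t+j}\): if \(\tilde\be+\mathbf n\in M_{t+j}\) with \(\mathbf n\in N\subset M_{t+j}\), then \(\tilde\be\in M_{t+j}\). A full rank coset cannot be covered by finitely many rank \(<r\) subgroups. Hence the union \(\bigcup_{i\le t+n}M_i\) is a proper subset of \(\Z^r\).

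The main obstacle is the bookkeeping in the induction. One must make sure the exceptional sets of the subvarieties \(W\) are handled only through their own \(V_W\), with no circularity, and that condition (a) is genuinely inherited. Both points are settled above: \(W\subset Z\) gives inheritance of (a), and \(\be\notin\calE_Z\) is exactly what places the points of \(Z\cap A^r[\be]\) inside the components \(W\).
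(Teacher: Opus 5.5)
Your proposal is correct and follows the paper's proof essentially step for step. The paper also uses induction on $\dim Z$, split according to whether $\calE_Z=\Z^r$ (via Proposition~\ref{prop:Zariski_closed}, Lemma~\ref{lem:EZB_subgroup} and Corollary~\ref{cor:EZ_eq_Zr}), applies the conjecture to push the remaining intersections into lower-dimensional components, and uses Lemma~\ref{lem:TorsionTranslate} for part (2). The differences are cosmetic: you apply the conjecture over $\Z^r\smallsetminus\calE_Z$ rather than over the complement of a closed set $V_0\supset\calE_Z$, and you prove directly, via the coset $\tilde{\be}+\bigcap_j M_{t+j}$, that $\bigcup_{i\le t+n}M_i$ is proper, where the paper cites an external lemma on finite unions of subgroups.
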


\begin{proof}
    We will prove the first statement by induction on the dimension of \(Z\).  The base case is when \(Z = \emptyset\), in which case the statement holds with \(V = \emptyset\) and \(n=0\). 
     
     Now assume that \(Z\) is nonempty. We first consider the case that \(\calE_Z = \Z^r\). By definition, for all \(\be \in \Z^r\), we have \(\dim Z \geq \dim [\be]\langle Z \rangle^\circ\). Let \(H \colonequals \langle Z \rangle^\circ\). Since \(Z\) is irreducible, it lies in a unique irreducible component of \(\langle Z \rangle\), which is a torsion translate \(\tau + H\).  Hence the conclusion holds with \(V = \emptyset\), \(n= 1\), \(\tau_1 = \tau\), and \(H_1 = H\).  If \(A\) has distinct isogeny factors and \(\dim Z < \dim A\), then
     by Corollary~\ref{cor:EZ_eq_Zr}, there exists a proper subvariety \(G \subset A\) and torsion point \(\tau\in A^r_{\tors}\) such that \(Z \subset \tau + G^r\). 
     For every \(\be \in \Z^r\) we have 
     \[\dim Z \cap (\tau + G^r) = \dim Z \geq \dim G \geq \dim [\be]G^r.\]
     Hence the conclusion holds with \(V = \emptyset\), \(n=1\), \(\tau_1 = \tau\) and \(H_1 = G^r\).

     It remains to consider the case that \(\calE_Z \neq \Z^r\).  By Proposition~\ref{prop:Zariski_closed}, there exists a proper Zariski closed \(V_0 \subset \C^r\) such that \(\calE_Z \subset V_0 \cap \Z^r\).  In the case that \(A\) has distinct isogeny factors, for all abelian subvarieties \(G_1, \dots, G_s \subset A\) with \(\dim G_i \leq \dim Z\), we have \(Z \not\subset A^r_{\tors} + G_i^r\) by Corollary~\ref{cor:EZ_eq_Zr}.  Hence each \(\calE_Z^{G_i}\) is a subgroup of rank less than \(r\) by Lemma~\ref{lem:EZB_subgroup}.  In this case, we may therefore take \(V_0\) to be such a union of smaller-dimensional linear spaces.
     
     By Conjecture~\ref{conj:GeneralizedMM},
     \[
        Z\cap \bigcup_{\be\in \Z^r\smallsetminus V_0}A^r[\be]
     \]
     is not Zariski dense in \(Z\). Thus, there exist \(W_1, \dots, W_m\subset Z\), each integral and of strictly smaller dimension than \(Z\) such that
     \begin{equation}\label{eq:components}
        Z\cap \bigcup_{\be\in \Z^r \smallsetminus V_0}A^r[\be]
        = \bigcup_{\ell=1}^m \left(W_\ell\cap \bigcup_{\be\in \Z^r \smallsetminus V_0}A^r[\be]\right).
          \end{equation}
     For fixed \(1 \leq \ell \leq m\), by induction, there exists a proper Zariski closed subset \(V_{\ell} \subset \C^r\) and torsion translates \(\tau_{\ell, 1} + H_{\ell, 1}, \dots, \tau_{\ell, n_\ell} + H_{\ell, n_\ell}\) satisfying \(\dim W_\ell \cap (\tau_{\ell, j} + H_{\ell, j}) \geq \dim [\be] H_{\ell, j}\) for all \(\be\) and \(1 \leq j \leq n_\ell\) such that
     \begin{equation}\label{eq:W}
     W_\ell\cap \bigcup_{\be\in \Z^r \smallsetminus V_\ell}A^r[\be] \subset \bigcup_{j=1}^{n_\ell} \tau_{\ell, j} + H_{\ell, j}.
     \end{equation}
     Further, when \(A\) has distinct isogeny factors, we may assume that the \(V_\ell\) and \(H_{\ell, j}\) have the claimed special structure.
    Since \(W_\ell \subset Z\), we  have \(\dim Z\cap (\tau_{\ell, j} + H_{\ell, j}) \geq \dim W_\ell\cap (\tau_{\ell, j} + H_{\ell, j}) \geq \dim [\be] H_{\ell, j}\) for all \(j\).  
    Combining \eqref{eq:components} with \eqref{eq:W} for each \(\ell\), the desired conclusion therefore holds with the proper Zariski closed subset \(V \colonequals V_0 \cup \bigcup_{\ell=1}^m V_\ell\), and torsion translates \(\bigcup_{\ell=1}^m \bigcup_{j=1}^{n_\ell} \tau_{\ell, j} + H_{\ell, j}\).

    It remains to prove the second statement. Assume that there exists a nonzero \(\tilde\be\) such that \([\tilde{\be}]\tau_i \not\in B_i\) for all \(1 \leq i \leq n\). Let \(M_{t+i}=M_{\tau_i + B_i^r}\) for \(1 \leq i \leq n\) be the subgroup from Lemma~\ref{lem:TorsionTranslate} with the property that \((\tau_i + B_i^r)\cap A^r[\be] \neq \emptyset\) if and only if \(\be \in M_{\tau_i + B_i^r}\).  By our assumption, \(\tilde{\be} \not\in \bigcup_{i=1}^n M_{t+i}\), and, in particular, each of these \(M_{t+i}\) is a proper rank \(r\) subgroup of \(\Z^r\). 
    A finite union of proper subgroups of \(\Z^r\) that excludes a single element must exclude an entire coset of a finite index subgroup~\cite{VirayVogt}*{Lemma 4.3.8}. Therefore the proper containment \(\bigcup_{i=t+1}^{t+n}M_i\subsetneq \Z^r\) implies that \(\bigcup_{i=1}^{t+n}M_i\) is also properly contained in \(\Z^r\).
    Finally, by part~\eqref{part:ConsZP_general_1}, 
    \[Z\cap \bigcup_{\be\notin \cup_{i=1}^{t+n} M_i} A^r[\be] \subset \left(\bigcup_{i=1}^n \tau_i  + B^r_i\right) \cap \bigcup_{\be\notin \cup_{i=1}^{t+n} M_i} A^r[\be] = \emptyset.\qedhere\]
\end{proof}

\begin{proof}[Proof of Theorem~\ref{thm:ConsequenceZPDistinctIsogeny}]
    Taking \(B = A\) and \(\tau = 0\), we see that \(\dim Z < \dim A\), and therefore we may
    apply Theorem~\ref{thm:ConsequenceZP_general}.  Observe that for any torsion translate \(\tau_j + B_j^r\) appearing in part~\eqref{part:ConsZP_general_1}, we have \(\dim Z \cap (\tau_j + B_j^r) \geq \dim B_j\), since \(\dim[\be]B_j^r = \dim B_j\) for \(\be\neq \mathbf{0}\), and \(\dim Z \cap (\tau_j + B_j^r) \geq \dim[\be]B_j^r\) must hold for all \(\be \in \Z^r\).

    Let \(\tilde{\be}\in \Z^r\) be the given tuple. If \(\tilde{\be} = 0\), then since \([\mathbf{0}]\tau\) is contained in every abelian subvariety \(B \subset A\), we must have that \(\dim Z \cap (\tau + B^r) < \dim B\) for every \(B \subset A\).  In particular, we must have \(n=0\) in Theorem~\ref{thm:ConsequenceZP_general}\eqref{part:ConsZP_general_1} and so the result holds with \(s=t\) and all of the subgroups having rank less than \(r\).  If \(\tilde{\be} \neq 0\), then this \(\tilde{\be}\) satisfies Theorem~\ref{thm:ConsequenceZP_general}\eqref{part:ConsZP_general_2}, since, as we have observed \(\dim Z \cap (\tau_j + B_j^r) \geq \dim B_j\) for all \(1 \leq j \leq n\).  The result now follows.
\end{proof}

%%%%%%%%%%%%%%%%%%%%%%%%%%%%%%%%%%%%%%%%%%%%%%%%%%%%%%%%%%%%%%%
\subsection{Relationship to the Zilber--Pink conjecture}\label{sec:RelationZP}

\begin{conj}[Zilber--Pink]\label{conj:ZilberPink}
    Let \(B\) be an abelian variety.  Let \(V \subset B\) be a subvariety that is not contained in a proper algebraic subgroup of \(B\).  Then the intersection
    \[V \cap \left(\bigcup_{\substack{H \subset B \textup{ alg.~subgroup} \\ \dim H + \dim V < \dim B}} H \right) \]
    is not Zariski dense in \(V\).
\end{conj}

Given an algebraic group \(H\), we write \(H^\circ\) for the connected component of the identity.

\begin{lemma}\label{lem:identity_dim}
    Let \(H_1, H_2 \subset G\) be algebraic subgroups of an algebraic group.  Then we have \(\dim H_1 \cap H_2 = \dim H_1^\circ \cap H_2\).
\end{lemma}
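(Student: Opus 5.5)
Since \(H_1^\circ\) is a subgroup of \(H_1\), we have \(H_1^\circ \cap H_2 \subseteq H_1 \cap H_2\), and so \(\dim H_1^\circ\cap H_2 \leq \dim H_1\cap H_2\). The whole proof is then the reverse inequality, which I will get by covering \(H_1\cap H_2\) with finitely many translates of \(H_1^\circ\cap H_2\).

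The key input is that \(H_1^\circ\) has finite index in \(H_1\). This holds because \(H_1\) is an algebraic group, hence of finite type, hence has finitely many connected components, each a coset of \(H_1^\circ\). Write \(H_1 = \bigcup_{i=1}^n g_i H_1^\circ\) with \(g_i\in H_1\). Then
\[
H_1\cap H_2 = \bigcup_{i=1}^n \left(g_iH_1^\circ \cap H_2\right).
\]
The dimension of a finite union of closed subsets is the maximum of their dimensions. So it suffices to show \(\dim (g_iH_1^\circ\cap H_2) \le \dim H_1^\circ\cap H_2\) for each \(i\) with \(g_iH_1^\circ\cap H_2\neq\emptyset\).

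Fix such an \(i\) and pick \(h\in g_iH_1^\circ\cap H_2\). Since \(h\in g_iH_1^\circ\), we have \(g_iH_1^\circ = hH_1^\circ\). Since \(h\in H_2\), we have \(hH_2 = H_2\). Therefore
\[
g_iH_1^\circ\cap H_2 = hH_1^\circ \cap hH_2 = h\left(H_1^\circ\cap H_2\right).
\]
Left translation by \(h\) is an automorphism of the variety \(G\), so it preserves dimension, and this translate has dimension exactly \(\dim H_1^\circ\cap H_2\). Taking the maximum over \(i\) gives \(\dim H_1\cap H_2 \le \dim H_1^\circ\cap H_2\), which completes the argument.

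Nothing here is a serious obstacle. The only point needing care is using a point \(h\) lying in both the coset and \(H_2\), so that the translate of the intersection is again an intersection of cosets. One should also note that empty cosets contribute nothing to the union.
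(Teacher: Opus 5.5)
Your proof is correct, but it decomposes \(H_1\cap H_2\) differently from the paper.

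\textbf{The paper's route.} The paper uses the fact that \(H_1\cap H_2\) is itself an algebraic group. Its components are therefore translates of \((H_1\cap H_2)^\circ\), which gives \(\dim H_1\cap H_2=\dim (H_1\cap H_2)^\circ\). It then concludes by monotonicity of dimension along the chain
\[
(H_1\cap H_2)^\circ\subset H_1^\circ\cap H_2^\circ\subset H_1^\circ\cap H_2\subset H_1\cap H_2.
\]
The first inclusion holds because a connected subgroup containing the identity lies in the identity component.

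\textbf{Your route.} You never look at \((H_1\cap H_2)^\circ\). Instead you intersect \(H_2\) with the finitely many cosets of \(H_1^\circ\). You then show each nonempty piece is a translate \(h(H_1^\circ\cap H_2)\).

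\textbf{Comparison.} Your argument gives slightly more: \(H_1^\circ\cap H_2\) has finite index in \(H_1\cap H_2\), so \(H_1\cap H_2\) is a finite union of its translates. The paper's argument is shorter. As a free byproduct of the chain, it also gives \(\dim H_1\cap H_2=\dim H_1^\circ\cap H_2^\circ\).

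Both arguments rest on the same two facts: homogeneity under translation, and finiteness of the component group. Both implicitly translate by points of the group, which is harmless because this section works over \(\C\).
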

\begin{proof}
    Any component of the intersection \(H_1 \cap H_2\) is a torsor under the identity component \((H_1 \cap H_2)^\circ\), and so \(\dim H_1 \cap H_2  = \dim (H_1 \cap H_2)^\circ\).  The result follows from the chain of inclusions 
    \[(H_1 \cap H_2)^\circ \subset H_1^\circ \cap H_2^\circ \subset H_1^\circ \cap H_2 \subset H_1 \cap H_2.\qedhere\]
\end{proof}

\begin{prop}
    Conjecture~\ref{conj:ZilberPink} implies Conjecture~\ref{conj:GeneralizedMM}.
\end{prop}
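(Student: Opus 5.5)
We reduce Conjecture~\ref{conj:GeneralizedMM} to Conjecture~\ref{conj:ZilberPink} by applying the latter to \(V = Z\) inside a suitable ambient abelian variety. Let \(Z\subset A^r\) be integral. Since \(Z\) is irreducible, it lies in a unique component \(\tau + H\) of \(\langle Z\rangle\), where \(H = \langle Z\rangle^\circ\) and \(\tau\in A^r_{\tors}\). Translating by \(\tau\) does not change Zariski density, but it moves the subgroups \(A^r[\be]\) to their translates. So instead of translating, we pass to a finite cover. Choose \(N\) with \(N\tau = 0\) and consider \(V \colonequals [N]^{-1}(Z)\cap H'\) for an appropriate component. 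Alternatively, and more cleanly, set \(B \colonequals H\) and \(V \colonequals Z - \tau\subset B\). Then \(V\) is not contained in a proper algebraic subgroup of \(B\), because \(\langle V\rangle^\circ=\langle Z\rangle^\circ = B\) and \(V\) is irreducible and contains \(0\)-translates. Concretely, \(\langle V\rangle\) is an algebraic subgroup of \(B\) of full dimension containing the connected set \(V\ni\) generators, and we argue that it equals \(B\).

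Fix \(\be\notin\calE_Z\). Points of \(Z\cap A^r[\be]\) correspond, after subtracting \(\tau\), to points of \(V\) lying in \((A^r[\be] - \tau)\cap B\). Since \(\tau\) is torsion, \(A^r[\be]-\tau\) is contained in the algebraic subgroup \(G_\be \colonequals A^r[N\be]\) with \(N=\ord(\tau)\), because \([N\be](z-\tau) = N[\be]z - [\be]N\tau = 0\). The key point is that \(A^r[N\be]\) and \(A^r[\be]\) have the same identity component, since \([N\be] = [N]\circ[\be]\) and \([N]\) has finite kernel. Hence, by Lemma~\ref{lem:identity_dim},
\[
\dim (G_\be\cap B) = \dim (A^r[\be]\cap \langle Z\rangle),
\]
and we can use Lemma~\ref{lem:identity_dim} again to pass between \(\langle Z\rangle\) and \(\langle Z\rangle^\circ=B\). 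Because \(\be\notin \calE_Z\), this dimension is strictly less than \(\dim B - \dim Z = \dim B-\dim V\). Thus \(H_\be \colonequals G_\be\cap B\) is an algebraic subgroup of \(B\) with \(\dim H_\be + \dim V<\dim B\).

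It follows that
\[
V\cap \bigcup_{\be\notin \calE_Z}(A^r[\be]-\tau)\subset V\cap\bigcup_{\be\notin\calE_Z} H_\be,
\]
and the right-hand side is not Zariski dense in \(V\) by Conjecture~\ref{conj:ZilberPink}. Translating back by \(\tau\) gives Conjecture~\ref{conj:GeneralizedMM}.

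The main obstacle is the hypothesis of Conjecture~\ref{conj:ZilberPink}: we need \(V=Z-\tau\) not to lie in a proper algebraic subgroup of \(B=\langle Z\rangle^\circ\). The plan is to note that \(\langle V\rangle \subset \langle Z\rangle - \tau\cdot\Z\subset\langle Z\rangle\). Since \(V\) lies in the identity component \(B\), we get \(\langle V\rangle\subset B\). Conversely, \(Z\subset \tau+\langle V\rangle\), so \(\langle Z\rangle\subset \langle \tau\rangle+\langle V\rangle\), which forces \(\langle V\rangle^\circ \supseteq B\). So \(\langle V\rangle\) is an algebraic subgroup of the connected group \(B\) of full dimension, hence equal to \(B\). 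The remaining care is bookkeeping the components, which is exactly what Lemma~\ref{lem:identity_dim} handles.
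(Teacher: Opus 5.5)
Your proposal is correct and follows essentially the paper's argument: translate \(Z\) by the torsion point \(\tau\) into \(B=\langle Z\rangle^\circ\), enclose \(-\tau+A^r[\be]\) in an algebraic subgroup with the same identity component as \(A^r[\be]\), compare dimensions with Lemma~\ref{lem:identity_dim}, and apply Conjecture~\ref{conj:ZilberPink}. The differences are minor. You use the explicit subgroup \(A^r[N\be]\) where the paper uses \(\langle -\tau + A^r[\be]\rangle\). You also explicitly verify that \(V=Z-\tau\) is not contained in a proper algebraic subgroup of \(B\), a hypothesis the paper uses without comment; your closing argument for this is sound, though it replaces the garbled justification in your first paragraph.
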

\begin{proof}
Since \(Z\) is integral, it is contained in one translate \(\tau + \langle Z \rangle^\circ\) of the identity component \(\langle Z \rangle^\circ\) of the group it generates, for some (possibly trivial) torsion point \(\tau\).  Hence the translate \(-\tau + Z\) is contained in \(\langle Z \rangle^\circ\).

We will apply Conjecture~\ref{conj:ZilberPink} to the abelian variety \(\langle Z \rangle^\circ\) and subvariety \(V \colonequals -\tau + Z\).  Let \(H_{\be} \colonequals  \langle Z \rangle^\circ \cap \langle-\tau + A^r[\be]\rangle\).
Applying Lemma~\ref{lem:identity_dim} twice, using the fact that \(\tau\) is a torsion point, we have 
\[
\dim \langle Z \rangle \cap A^r[\be] = \dim \langle Z \rangle^\circ \cap A^r[\be] = \dim \langle Z \rangle^\circ \cap \langle -\tau + A^r[\be]\rangle = \dim H_{\be}.
\]
Since \(\dim \langle Z \rangle^\circ = \dim \langle Z \rangle\) and \(\dim V  = \dim Z\), we have
\[\calE_Z  = \{\be : \dim \langle Z \rangle \cap A^r[\be] \geq \dim \langle Z \rangle - \dim Z\} = \{\be : \dim H_{\be} \geq \dim \langle Z \rangle^\circ - \dim V\}. \]
Thus, if \(\be \in \Z^r \smallsetminus \calE_Z\), then \(\dim H_{\be} + \dim V < \dim \langle Z \rangle^\circ\).  Hence Conjecture~\ref{conj:ZilberPink} implies that
\[V \cap \left( \bigcup_{\be \in \Z^r \smallsetminus \calE_Z} H_{\be} \right) \]
is not Zariski dense in \(V\).

Considering translation, we have
\[-\tau + (Z \cap A^r[\be]) = (-\tau + Z) \cap (-\tau + A^r[\be]) \subset (-\tau + Z) \cap \langle -\tau + A^r[\be] \rangle = V \cap H_{\be},\]
and hence 
\[Z \cap \left( \bigcup_{\be \in \Z^r \smallsetminus \calE_Z} A^r[\be] \right) \subset \tau  + \left(V \cap \left( \bigcup_{\be \in \Z^r \smallsetminus \calE_Z} H_{\be} \right)\right) \]
is not Zariski dense in \(\tau + V  = Z\), as desired.
\end{proof}

%%%%%%%%%%%%%%%%%%%%%%%%%%%%%%%%%%%%%%%%%%%%%%%%%%%%%%%%%%%%%%%%%%%%%%%%%%%%%%%%%%%%%%%%%%%%%%%%%%%%%%%%%%
%%%%%%%%%%%%%%%%%%%%%%%%%%%%%%%%%%%%%%%%%%%%%%%%%%%%%%%%%%%%%%%%%%%%%%%%%%%%%%%%%%%%%%%%%%%%%%%%%%%%%%%%%%
\section{Relating unlikely splitting profiles to unlikely intersections}\label{sec:UnlikelySplittingProfiles}

   The goal of this section is to show how the unlikely intersections results from Section~\ref{sec:MMConsequences} imply the results on unlikely splitting profiles from the introduction. We first give the general framework and then in Section~\ref{sec:ProofsOfMainTheorems}, we explain how to deduce the specific statements in the introduction.

    \subsection{The general framework}\label{sec:overview} The main focus of this paper is determining for which places \(v\) a given splitting profile \(\spp\) is realized by points \(x\in C\) with \([x]\in \langle [D]\rangle+H\), for some \(k\)-rational divisor \(D\) and abelian subvariety \(H\subset J_C\). The applications will all rely on the case when \(H = \{0\}\), but we give the framework for arbitrary \(H\) with a view to future exploration of unlikely splitting profiles in the case when \(J_C\) is positive rank.
    
    For \(v\) a nonarchimedean place of good reduction, we can spread out and make use of reduction modulo \(v\). The first step in our approach is to observe that  existence of \textit{global} points with prescribed ramification at \(v\) implies the existence of \(\F_v\)-points in a certain subvariety of a product of symmetric products.
    \begin{lemma}\label{lem:SplProfileImpliesFvPoints}
        Let \(C\) be a nice curve over a number field \(k\), let \(v\) be a prime of good reduction, let \(D\) be a \(k\)-rational divisor of positive degree on \(C\), let \(x\in C\) be a closed point, and let \(H\subset J_C\) be an abelian subvariety. Write \(\spp:= \spp_v(\kk(x)/k), r := r(\spp), \bff := \bff(\spp), \be := \be(\spp)\). If \([x]\in \langle [D]\rangle + H\), then there exist points \(\overline{x}_1, \dots, \overline{x}_{r}\) on \(C_{\F_v}\) with \(\deg(\overline{x}_i) \mid f_i\) such that
        \[
        \sum_i e_i\cdot\textstyle\frac{f_i}{\deg(\bar{x}_i)}[\overline{x}_i] \in \textstyle{\frac{\deg(x)}{\deg(D)}}\left[D_{\F_v}\right] + H_{\F_v}. 
        \]
        In particular, there is an \(\F_v\)-point of \(\Sym^{\bff}_{C_{\F_v}}\) that maps to \(\frac{\deg(x)}{\deg(D)}\left[D_{\F_v}\right] + H_{\F_v}\) under the composition
        \begin{equation}\label{eq:SymComposition}
                \Sym^{\bff}_C\xrightarrow{\textup{Abel--Jacobi}} W^{\bff}_C \xrightarrow{[\mathbf{e}]} \Pic^{\deg(\spp)}_C.
        \end{equation}

        Further, given an integral model \(\calC \to \Spec \OO_{k}\), if the map \(\Spec \OO_{\kk(x)} \to \calC\), corresponding to the point \(x\), is injective on geometric points over \(v \in \Spec \OO_{k}\) (which occurs for all but finitely many primes), then \(\deg(\bar{x}_i) = f_i\) and \(\bar{x}_i \neq \bar{x}_j\) for \(i \neq j\).
    \end{lemma}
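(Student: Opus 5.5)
The plan is to reduce the closed point \(x\) modulo \(v\) using the smooth proper model \(\calC\to\Spec\OO_{k,S}\) (with \(v\notin S\)), and then to compare divisor classes on the special fiber through the specialization map on \(\Pic\).

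First, factor the ideal \(v\OO_{\kk(x)} = \prod_{i=1}^r w_i^{e_i}\) with \([\F_{w_i}:\F_v]=f_i\). The point \(x\) gives a morphism \(\Spec\OO_{\kk(x),S}\to\calC\), which is finite because \(\calC\) is proper. The scheme-theoretic closure \(\overline{\{x\}}\) is therefore a horizontal prime divisor on \(\calC\) that is flat over \(\OO_{k,S}\). Its restriction to the fiber \(C_{\F_v}\) is the effective Cartier divisor obtained by pulling back along the fiber. For each \(i\), the component of \(\Spec \OO_{\kk(x)}\otimes\F_v\) supported at \(w_i\) is \(\Spec\) of an Artinian ring of length \(e_i\) over \(\F_{w_i}\), and it maps to a closed point \(\bar{x}_i\in C_{\F_v}\). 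Since \(\F_{w_i}\supset \kk(\bar{x}_i)\), we get \(\deg(\bar x_i)\mid f_i\).

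The local contribution of \(w_i\) to \(\overline{\{x\}}|_{C_{\F_v}}\) is \(e_i\frac{f_i}{\deg\bar x_i}\bar x_i\). To see this, take a uniformizer \(t\) of \(C_{\F_v}\) at \(\bar x_i\); the multiplicity is the length of \(\OO_{\kk(x),w_i}/(v)\) as a module over \(\OO_{C_{\F_v},\bar x_i}\). I expect this multiplicity computation to be the main technical point. It amounts to the statement that the length over the residue field of \(\bar x_i\) equals \(e_if_i/\deg\bar x_i\), which follows by counting \(\F_v\)-dimensions: \(\dim_{\F_v}\) of the local piece is \(e_if_i\), and each length unit contributes \(\deg\bar x_i\).

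Hence the specialization of \([x]\) is \(\sum_i e_i\frac{f_i}{\deg\bar x_i}[\bar x_i]\). Next, the specialization map \(\Pic_{C}(k)\to \Pic_{C_{\F_v}}(\F_v)\), given by closure in the smooth proper \(\Pic_{\calC/\OO_{k,S}}\), is a group homomorphism. It agrees with restriction of horizontal divisors, because for a divisor on \(C\) the closure in \(\calC\) gives a line bundle whose restriction represents the limit point. If \([x]=n[D]+h\) with \(h\in H\), then comparing degrees gives \(n=\deg(x)/\deg(D)\). Also \(h\) is a \(k\)-point (being a difference of \(k\)-rational classes) of \(H\), so it specializes into \(H_{\F_v}\), the fiber of the closure of \(H\). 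This proves the displayed relation.

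For the \(\F_v\)-point of \(\Sym^{\bff}\), take the effective \(\F_v\)-divisor \(\frac{f_i}{\deg\bar x_i}\bar x_i\) of degree \(f_i\) in the \(i\)-th factor; its image under \eqref{eq:SymComposition} is the sum above. For the final claim, suppose the map \(\Spec\OO_{\kk(x)}\to\calC\) is injective on geometric points over \(v\). The geometric points over \(w_i\) number \(f_i\) and map onto the \(\deg\bar x_i\) geometric points of \(\bar x_i\), so injectivity forces \(\deg\bar x_i=f_i\). It likewise forces \(\bar x_i\neq\bar x_j\) for \(i\neq j\), since otherwise distinct \(w_i,w_j\) would have geometric points with the same image.

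Finally, this injectivity holds for all but finitely many \(v\). The reason is that \(x\) has \(\deg(x)\) distinct geometric points over \(\bar k\), and their reductions can collide only at the finitely many primes dividing a suitable discriminant. Concretely, one can use a finite affine chart and the discriminant of a coordinate that separates the geometric points.
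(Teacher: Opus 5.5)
Your proposal is correct and follows the same route as the paper's proof. Like the paper, you spread \(x\) out to \(\Spec\OO_{\kk(x),S}\to\calC\), read off its fiber over \(v\) as \(\sum_i e_i\frac{f_i}{\deg\bar x_i}\bar x_i\), and carry \([x]\in\langle[D]\rangle+H\) to the special fiber by specialization in \(\Pic_{\calC/\OO_{k,S}}\); you spell that last step out more carefully than the paper's ``closed condition'' remark. One step is left light in both your argument and the paper's: the closure \(\overline{\{x\}}\) is \(\Spec\) of a possibly non-maximal order \(R\subset\OO_{\kk(x),S}\), so the multiplicity at \(\bar x\) is the length of \((R/\pi R)_{\bar x}\) (with \(\pi\) a uniformizer at \(v\)), and when several \(w_i\) map to the same \(\bar x\) it is not a sum of separately defined ``contributions of \(w_i\)''; the equality with \(\sum_{w_i\mapsto\bar x}e_if_i/\deg\bar x\) holds because, after completing at \(v\), \(R\) and \(\OO_{\kk(x),S}\) become free of the same rank, compatibly with the decomposition over the points \(\bar x\).
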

    \begin{proof}
        Let \(\calC \to \Spec \OO_{k,S}\) be a smooth proper model of \(C/k\).  The closed point \(x\) spreads out to a map \(\Spec \OO_{\kk(x), S'} \to \calC\), where \(S'\) is the set of places of \(\kk(x)\) above \(S\).  By definition of the splitting profile, the fiber of \(\Spec \OO_{\kk(x), S'}\) over \(v \in \Spec \OO_{k,S}\) is \(r\) distinct points of degree \(f_i\) over \(\Spec \F_v\) and multiplicity \(e_i\).  The image of each of these points is a closed point \(\bar{x}_i \in C_{\F_v}\) of degree dividing \(f_i\).  Hence, the fiber of the image of \(\Spec \OO_{\kk(x), S'} \to \calC\) over \(v\) is equal (as a divisor) to \(\sum_i e_i\frac{f_i}{\deg(\bar{x}_i)}\overline{x}_i\).  If the map \(\Spec \OO_{\kk(x), S'} \to \calC\) is injective on geometric points over \(v\), then \(\deg(\bar{x}_i) = f_i\) and \(\bar{x}_i \neq \bar{x}_j\) for \(i \neq j\).
        
        Since \([x]\in \langle [D]\rangle + H\) is a closed condition, it also holds modulo \(v\). In other words, the \(\F_v\)-point \(\left(\frac{f_i}{\deg(x_i)}\overline{x}_{i}\right)_i\in \Sym^{\bff}_{C_{\F_v}}\) maps to \(\frac{\deg(x)}{\deg(D)}\left[D_{\F_v}\right] + H_{\F_v}.\)
    \end{proof}
     Since the morphism~\eqref{eq:SymComposition} and the subvariety \(\frac{\deg(x)}{\deg(D)}[D] + H\) are defined over \(k\), this gives a geometric criterion to determine whether a fixed splitting profile \(\spp\) is likely in \(|\frac{\deg(\spp)}{\deg(D)} D + H|\).

     \begin{lemma}\label{lem:SymIntersectionEmptyImpliesUnlikely}
        Let \(C\) be a nice curve over a number field \(k\), let \(D\) be a \(k\)-rational divisor of positive degree on \(C\), and let \(\spp\) be a splitting profile with \(\deg(D)|\deg(\spp)\). 
        Let \(H \subset J_C\) be an abelian subvariety.
        If the image of the composition
        \[
        \Sym^{\bff}_C\xrightarrow{\textup{Abel--Jacobi}} W^{\bff}_C \xrightarrow{[\mathbf{e}]} \Pic^{\deg(\spp)}_C,
        \]
        does not intersect \([\frac{\deg(\spp)}{\deg(D)}D] + H\), then the splitting profile \(\spp\) is unlikely in \(|\frac{\deg(\spp)}{\deg(D)} D + H|\).   
    \end{lemma}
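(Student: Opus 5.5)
We argue by contraposition, using Lemma~\ref{lem:SplProfileImpliesFvPoints} place by place together with a spreading-out argument. Set \(n \colonequals \deg(\spp)/\deg(D)\), \(\bff \colonequals \bff(\spp)\), \(\be \colonequals \be(\spp)\). Let \(Y\subset \Pic^{\deg(\spp)}_C\) be the image of \(\Sym^{\bff}_C\to W^{\bff}_C\xrightarrow{[\be]}\Pic^{\deg(\spp)}_C\). Since \(\Sym^{\bff}_C\) is proper, \(Y\) is a closed subvariety defined over \(k\), and by hypothesis \(Y\cap (n[D]+H)=\emptyset\) as closed subschemes of \(\Pic^{\deg(\spp)}_C\). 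The goal is to show that for all but finitely many places \(v\), no point \(x\) with \([x]\in n[D]+H\) can have \(\spp_v(\kk(x)/k)=\spp\). Note that any such \(x\) has \(\deg(x)=\deg(\spp)\), so \([x]\in\langle [D]\rangle + H\) with \(\deg(x)/\deg(D)=n\).

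First, spread out. Choose a finite set of places \(S\) with a smooth proper model \(\calC\to\Spec\OO_{k,S}\). Take closures \(\calY\) of \(Y\) and \(\mathcal{T}\) of \(n[D]+H\) in \(\Pic^{\deg(\spp)}_{\calC/\OO_{k,S}}\), which is proper over \(\OO_{k,S}\). I would also arrange that the map \(\Sym^{\bff}_{\calC}\to \Pic^{\deg(\spp)}_{\calC}\) is defined over \(\OO_{k,S}\) and that its fiber image over each \(v\) is contained in \(\calY_{\F_v}\); this holds because the image of a proper scheme lies in the closure of the image of its generic fiber, once \(S\) is enlarged so that \(\Sym^{\bff}_{\calC}\) is flat with irreducible generic fiber. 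The intersection \(\calY\cap\mathcal{T}\) is then a closed subscheme of a scheme proper over \(\OO_{k,S}\) with empty generic fiber. Its image in \(\Spec\OO_{k,S}\) is therefore a closed set not containing the generic point, hence a finite set of places. Enlarge \(S\) by these places. For \(v\notin S\), this gives \(\calY_{\F_v}\cap\mathcal{T}_{\F_v}=\emptyset\).

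Now suppose \(v\notin S\) and that some point \(x\) with \([x]\in n[D]+H\) has \(\spp_v(\kk(x)/k)=\spp\). Lemma~\ref{lem:SplProfileImpliesFvPoints} produces an \(\F_v\)-point of \(\Sym^{\bff}_{C_{\F_v}}\), namely \(\left(\tfrac{f_i}{\deg\bar x_i}\bar x_i\right)_i\). Its image lies in \(n[D_{\F_v}]+H_{\F_v}=\mathcal{T}_{\F_v}\), and also in \(\calY_{\F_v}\) by the compatibility arranged above. This contradicts \(\calY_{\F_v}\cap\mathcal{T}_{\F_v}=\emptyset\). Hence the set of places realizing \(\spp\) is contained in the finite set \(S\), so \(\spp\) is unlikely.

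The main obstacle is the compatibility between the mod-\(v\) reductions. We need the reduction of the image \(Y\) to contain the image of the special fiber \(\Sym^{\bff}_{C_{\F_v}}\). We also need \(\mathcal{T}_{\F_v}\) to coincide with the subvariety \(n[D_{\F_v}]+H_{\F_v}\) used in Lemma~\ref{lem:SplProfileImpliesFvPoints}, which is the notation for the closure's fiber. Both follow from properness and generic flatness after enlarging \(S\) finitely.
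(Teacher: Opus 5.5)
Your argument is correct and is essentially the paper's: spread out over \(\OO_{k,S}\), use that a scheme of finite type over \(\OO_{k,S}\) with empty generic fiber has nonempty fibers over only finitely many places, and feed in the \(\F_v\)-point produced by Lemma~\ref{lem:SplProfileImpliesFvPoints}. The one difference is where the intersection happens. The paper pulls back (the closure of) \(\frac{\deg(\spp)}{\deg(D)}[D]+H\) to \(\Sym^{\bff}_{\calC}\) instead of intersecting it with the closure of the image in \(\Pic\); since the \(\F_v\)-point from Lemma~\ref{lem:SplProfileImpliesFvPoints} lands directly in that preimage, the compatibility step you flagged as the main obstacle is not needed.
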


     \begin{proof}
        For any finite set of places \(S\) and variety \(\calV \to \Spec \OO_{k,S}\), the set of places \(v\notin S\) where \(\calV\) has an \(\F_v\)-point is a subset of the support \(\Supp(\calV)\subset \Spec\OO_{k,S}\). In particular, if \(\calV_k = \emptyset\), then there are finitely many places \(v\) where \(\calV\) has an \(\F_v\)-point. Now we apply this for \(\calV\) an integral model of 
        the complete preimage of \(\frac{\deg(\spp)}{\deg(D)} [D] + H\) under the composition of maps~\eqref{eq:SymComposition}. 
     \end{proof}
    The geometric criterion in Lemma~\ref{lem:SymIntersectionEmptyImpliesUnlikely} is most useful when one is interested in a finite set of possible profiles, each of manageable degree. Indeed, Lemmas~\ref{lem:SplProfileImpliesFvPoints} and~\ref{lem:SymIntersectionEmptyImpliesUnlikely} give us a method for computing the sets of primes realizing a given splitting profile, such as is given in Table~\ref{table:genus2}; see Section~\ref{sec:ProofOfTable} for more details. However, since the intersection depends on all aspects of the splitting profile, it is of limited use when considering an infinite set of profiles as discussed in the main theorems of the paper.

    One of the main insights in the paper is that for the (infinitely many) splitting profiles that share a common \(\bff\) tuple, the intersections under study can be placed in the common framework of the intersection of a fixed subvariety of \((J_C/H)^r\) with a (varying) algebraic subgroup.
    When \(\bff = (f)\) has length \(1\), we intersect with the torsion subgroups of \(J_C/H\).  In general, we intersect with the subgroups \(A^r[\be(\spp)]\) studied in Section~\ref{sec:MMConsequences},
    where \(A = J_C/H\).

    \begin{prop}\label{prop:JacIntersectionEmptyImpliesUnlikely}
        Let \(C\) be a nice curve over a number field \(k\), let \(D\) be a \(k\)-rational divisor of positive degree on \(C\), and let \(H\subset J_C\) be an abelian subvariety. Let \(\spp^0 = \{(e_i^0, f_i)\}\) be a splitting profile with \(\deg(D) \mid e^0_if_i\) for all \(i\) and write \(\be^0 \colonequals \be(\spp^0)\) and \(\bff \colonequals \bff(\spp^0)\). 
        For \(\be = (e_1, \dots, e_r) \in \Diag(\be^0)\Z^r_{>0}\), if the intersection of \(k\)-varieties
            \begin{equation}\label{eq:JacIntersection}
                \left(\pi_H^{r}\circ\iota_{D} \circ [\Diag(\be^0)] \right)\left(W^{\bff}_{C}\right)\cap (J_{C}/H)^r[\be/\be^0]
            \end{equation}
        is empty, then \(\spp \colonequals \{(e_i, f_i)\}\) is unlikely in \(|\frac{\deg(\spp)}{\deg(D)} D + H|\).
    \end{prop}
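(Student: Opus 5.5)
The plan is to reduce to Lemma~\ref{lem:SymIntersectionEmptyImpliesUnlikely} by showing that emptiness of \eqref{eq:JacIntersection} is equivalent to the image of $\Sym^{\bff}_C \to W^{\bff}_C \xrightarrow{[\be]} \Pic^{\deg(\spp)}_C$ missing $[\tfrac{\deg(\spp)}{\deg(D)}D] + H$. Once this equivalence is in hand, Lemma~\ref{lem:SymIntersectionEmptyImpliesUnlikely} gives the conclusion directly. Note that $\deg(D)\mid \deg(\spp) = \sum_i e_if_i$ because $e_i^0 \mid e_i$ and $\deg(D)\mid e_i^0f_i$.

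The key step is a direct computation on geometric points. Let $(z_1,\dots,z_r)\in W^{\bff}_C$. Then $[\Diag(\be^0)]$ sends it to $(e_1^0z_1,\dots,e_r^0z_r)\in \prod_i \Pic^{e_i^0f_i}_C$. Since $\deg(D)\mid e_i^0f_i$, the map $\iota_D$ is defined there and sends this point to $w=(w_i)_i$ with
\[
w_i = e_i^0 z_i - \tfrac{e_i^0f_i}{\deg(D)}D.
\]
Writing $\be/\be^0 = (e_i/e_i^0)_i$, we get
\[
[\be/\be^0]\,w = \sum_i e_i z_i - \sum_i \tfrac{e_if_i}{\deg(D)} D = [\be](z_1,\dots,z_r) - \tfrac{\deg(\spp)}{\deg(D)}D.
\]
Since $\pi_H$ is a homomorphism, $\pi_H^r(w)$ lies in $(J_C/H)^r[\be/\be^0]$ if and only if $[\be/\be^0]w\in H$. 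By the display, this holds if and only if
\[
[\be](z_1,\dots,z_r)\in \tfrac{\deg(\spp)}{\deg(D)}[D]+H.
\]
Hence \eqref{eq:JacIntersection} is empty if and only if no point of $W^{\bff}_C$ maps under $[\be]$ into $\tfrac{\deg(\spp)}{\deg(D)}[D]+H$. This is exactly the hypothesis of Lemma~\ref{lem:SymIntersectionEmptyImpliesUnlikely}, because the Abel--Jacobi map $\Sym^{\bff}_C\to W^{\bff}_C$ is surjective.

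Here $\be$ is literally $\be(\spp)$ and $\bff=\bff(\spp)$, so the lemma applies to $\spp$ itself. Emptiness of a $k$-variety can be checked on $\bar k$-points, so the computation on geometric points suffices. I expect no real obstacle; the only care needed is bookkeeping: checking that $\iota_D$ is defined on $\prod_i\Pic^{e_i^0f_i}_C$ (which uses $\deg(D)\mid e_i^0f_i$), and that the $D$-translates combine to exactly $\tfrac{\deg(\spp)}{\deg(D)}D$.
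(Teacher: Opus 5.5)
Your proposal is correct and is essentially the paper's proof. The paper presents your pointwise computation as a commutative diagram, identifying $[\be/\be^0]\circ\pi_H^r\circ\iota_D\circ[\Diag(\be^0)]$ with $\pi_H\circ\iota_D\circ[\be]$ on $W^{\bff}_C$ and noting that the preimage of $0$ is $[\frac{\deg(\spp)}{\deg(D)}D]+H$; it then applies Lemma~\ref{lem:SymIntersectionEmptyImpliesUnlikely}, exactly as you do.
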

    \begin{remark}
        Any splitting profile \(\spp\) for which \(\deg(D) \mid ef\) for all \((e, f) \in \spp\) can be understood in the context of Proposition~\ref{prop:JacIntersectionEmptyImpliesUnlikely} with 
                 \[\spp^0 \colonequals \left\{\left(\frac{\deg (D)}{\gcd(f, \deg(D))}, f\right) : (e,f)\in \spp\right\}.\]
    \end{remark}

    \begin{proof}
        By definition of the subgroup \((J_{C}/H)^r[\be/\be^0]\), the intersection~\eqref{eq:JacIntersection} is empty if and only if the image of the composition
        \begin{equation}\label{eq:WfComposition}
            W^{\bff}_C\xrightarrow{[\Diag(\be^0)]}\prod_i W^{e_i^0f_i}_C \xrightarrow{\iota_D} J_C^{r} \xrightarrow{\pi_H^r} (J_C/H)^r \xrightarrow{[\be/\be^0]} J_C/H
        \end{equation}
        does not contain \(0\). This composition fits into the following commutative diagram.
        \[
        \begin{tikzcd}
            \Sym^{\bff}_C \arrow[r, two heads] & W^{\bff}_C \arrow[rr, "{[\Diag(\be^0)]}"] \arrow[rrd, "{[\be]}"] &&  \prod_i W^{e_i^0f_i}_C \arrow[r, "{\iota_D}"] \arrow[d, "{[\be/\be^0]}"] &J_C^r \arrow[r, "{\pi_H^r}"] \arrow[d, "{[\be/\be^0]}"] &(J_C/H)^r \ar[d, "{[\be/\be^0]}"]\\
             &  && \Pic^{\deg(\spp)}_C \arrow[r, "{\iota_D}"] & J_C \arrow[r, "{\pi_H}"] & J_C/H 
        \end{tikzcd}
        \]
        Since along the bottom row, the complete preimage of \(0\) is \([\frac{\deg(\spp)}{\deg(D)}D] + H\), the image of the composition~\eqref{eq:WfComposition} does not contain \(0\) if and only if the image of~\eqref{eq:SymComposition} does not meet \([\frac{\deg(\spp)}{\deg(D)}D] + H\). Thus, the assumptions of the proposition allow us to conclude by applying Lemma~\ref{lem:SymIntersectionEmptyImpliesUnlikely}.
    \end{proof}

    While Proposition~\ref{prop:JacIntersectionEmptyImpliesUnlikely}, may not appear to be substantially different than Lemma~\ref{lem:SymIntersectionEmptyImpliesUnlikely}, the fact that one component of the intersection~\eqref{eq:JacIntersection} depends only on \(\spp^0\) 
    and the other component is an algebraic subgroup means that the union over \(\be\in [\Diag(\be^0)]\Z^r\) of intersections~\eqref{eq:JacIntersection} falls under the framework of unlikely intersections (hence our choice of terminology).

    When \(r = 1\), we prove that there are a positive density set of degrees \(d\) where total ramification is unlikely in the linear system \(|dD|\) for \(D\) an ineffective divisor on a curve of genus at least \(2\). More generally, we use the Manin--Mumford conjecture to prove the following.

 \begin{theorem}\label{thm:unlikely_splitting_rank1}
    Let \(C\) be a nice curve over a number field \(k\), let \(D\in \Pic_C(k)\) be a divisor of positive degree, and let \(H\subset J_C\) be an abelian subvariety. Let \(\spp^0 = \{(e^0_i, f_i)\}\) be a splitting profile such that \(\deg(D) \mid e_i^0f_i\) for all \(i\) and write \(\be^0 \colonequals \be(\spp^0)\) and \(\bff \colonequals \bff(\spp^0)\).
    
    If \([\be^0](W^{\bff}_C)\cap \left(\frac{\deg(\spp^0)}{\deg(D)}[D] + H\right) = \emptyset\),
    then there exists \(m=m(C,\spp^0, D, H)\in \Z_{>0}\) such that 
    \[
(\pi_H\circ\iota_{D} \circ [\be^0])\left(W^{\bff}_C\right)\cap J_C/H[d]=\emptyset \quad \textup{for all }d\textup{ with }\gcd(d,m)=1.
\]  
Moreover, if \(m\) has the above property, then for all \(d\) coprime to \(m\), the splitting profile \(\mathsf{P} = \{(de^0_i, f_i)_{i=1}^r\}\) is \emph{unlikely} in \(\left|\frac{\deg(P)}{\deg(D)}[D]+H\right|\). 
\end{theorem}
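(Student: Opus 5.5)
The plan is to apply Proposition~\ref{prop:consequenceMM} to the abelian variety \(A \colonequals J_C/H\) and the closed subvariety
\[
Z \colonequals (\pi_H\circ\iota_D\circ[\be^0])\left(W^{\bff}_C\right)\subset J_C/H,
\]
and then feed the resulting \(m\) into Proposition~\ref{prop:JacIntersectionEmptyImpliesUnlikely}.

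First, \(Z\) is closed, since \(W^{\bff}_C\) is proper and all maps are morphisms. It is also defined over \(k\), since \(D\) is \(k\)-rational; we base change to \(\C\) for the geometric statement. Here \([\be^0]\colon W^{\bff}_C\to \Pic^{\deg(\spp^0)}_C\) is the map \((z_i)\mapsto\sum_i e_i^0z_i\), and \(\iota_D\) on \(\Pic^{\deg(\spp^0)}_C\) subtracts \(\frac{\deg(\spp^0)}{\deg(D)}D\).

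Next we verify that \(0\notin Z\). A point of \(W^{\bff}_C\) maps to \(0\in J_C/H\) exactly when \(\sum_i e_i^0 z_i \in \frac{\deg(\spp^0)}{\deg(D)}[D]+H\). The hypothesis says no point of \([\be^0](W^{\bff}_C)\) lies in this coset, so \(0\notin Z\). This holds over \(\C\) because emptiness of the intersection of \(k\)-varieties is preserved under base change.

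Proposition~\ref{prop:consequenceMM} then gives integers \(m_1,\dots,m_s\geq 2\) such that \(Z\cap A[d]\neq\emptyset\) if and only if \(d\in\bigcup_i m_i\Z\). Set \(m\colonequals\prod_i m_i\), or \(m=1\) if \(s=0\). If \(\gcd(d,m)=1\), then \(d\) is divisible by no \(m_i\geq 2\), since each \(m_i\) shares a prime factor with \(m\). Hence \(Z\cap A[d]=\emptyset\). Emptiness over \(\C\) implies emptiness over \(k\), which proves the first claim.

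For the "moreover" part, fix \(d\) coprime to \(m\) and put \(\be \colonequals d\be^0\in \Diag(\be^0)\Z^r_{>0}\), so that \(\be/\be^0=(d,\dots,d)\). We apply Proposition~\ref{prop:JacIntersectionEmptyImpliesUnlikely} with \(\spp=\{(de_i^0,f_i)\}\). The intersection~\eqref{eq:JacIntersection} is
\[
(\pi_H^r\circ\iota_D\circ[\Diag(\be^0)])(W^{\bff}_C)\cap (J_C/H)^r[(d,\dots,d)].
\]
A point \((y_i)\) of the first set lies in \((J_C/H)^r[(d,\dots,d)]\) if and only if \(d\sum_i y_i=0\). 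Since \(\sum_i \iota_D(e_i^0 z_i)=\iota_D\bigl(\sum_i e_i^0 z_i\bigr)\) (the translation amounts add up because \(\sum_i e_i^0f_i=\deg(\spp^0)\)), this is equivalent to the summed point \(\sum_i y_i\in Z\) lying in \(A[d]\). That is excluded, so the intersection is empty and \(\spp\) is unlikely.

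The only real content beyond bookkeeping is the compatibility \(\sigma\circ\iota_D=\iota_D\circ\sigma\) on the translation terms, together with the observation that the map \([(d,\dots,d)]\) equals \(d\sigma\). I expect the main obstacle to be making sure this compatibility is stated correctly, that is, that \(\iota_D\) on the product and on \(\Pic^{\deg(\spp^0)}_C\) match up under summation.
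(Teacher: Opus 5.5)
Your proposal is correct and follows the paper's proof essentially step for step. You check \(0\notin Z\) and invoke Proposition~\ref{prop:consequenceMM}; the paper takes any \(m\) divisible by \(\lcm(m_1,\dots,m_s)\) where you take the product, which makes no difference. You then use the identity \(\sigma\circ\pi_H^r\circ\iota_D\circ[\Diag(\be^0)]=\pi_H\circ\iota_D\circ[\be^0]\) to push the intersection~\eqref{eq:JacIntersection} into \(Z\cap (J_C/H)[d]\), and conclude with Proposition~\ref{prop:JacIntersectionEmptyImpliesUnlikely}, exactly as the paper does.
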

\begin{remark}
    As we will see in the proof, the first statement about \(m\) is a straightforward consequence of the Manin--Mumford conjecture (Proposition~\ref{prop:consequenceMM}). The novelty is in the last statement.
\end{remark}
\begin{proof}[Proof of Theorem~\ref{thm:unlikely_splitting_rank1}]
    Since we assumed that \([\be^0](W^{\bff}_C)\cap \left(\frac{\deg(\spp^0)}{\deg(D)}[D] + H\right) = \emptyset\), we have
    \[
    0 \not\in Z \colonequals (\pi_H\circ \iota_{D} \circ [\be^0])\left(W^{\bff}_C\right)\subset J_C/H.
    \]  
    Since \(0 \not\in Z\), Proposition~\ref{prop:consequenceMM} implies that there exists a finite list of integers \(m_1, \dots ,m_s\in\Z_{\geq2}\) such that 
        \[
            Z\cap (J_C/H)[d] = \emptyset \quad \Longleftrightarrow \quad d\notin \bigcup_i m_i\Z.
        \]
    In particular, the first statement in Theorem~\ref{thm:unlikely_splitting_rank1} holds for any positive integer \(m\) divisible by \(\lcm(m_1, \cdots, m_s)\).

        Now take such an \(m\), and a positive integer \(d\) coprime to \(m\), which implies that \(Z\cap (J_C/H)[d] = \emptyset\). Let \(\be= \be(\spp) = (de^0_i)_{i=1}^r\). 
          Since \(\sigma\circ \pi_H^r \circ \iota_D\circ [\Diag(\be^0)] = \pi_H \circ \iota_D\circ[\be^0]\), the summation map \(\sigma\) sends \(({\pi_H^r \circ} \iota_{D} \circ [\Diag(\be^0)])(W^{\bff}_C)\cap (J_C/H)^r[\be/\be^0]\) to \(Z\cap (J_C/H)[d]\) and so
        \[
            (\pi^r_H \circ \iota_{D} \circ [\Diag(\be^0)])(W^{\bff}_C)\cap (J_C/H)^r[\be/\be^0] = \emptyset. 
        \]
        Thus, by Proposition~\ref{prop:JacIntersectionEmptyImpliesUnlikely}, the splitting profile \(\spp\) is unlikely in \(|\frac{\deg(\spp)}{\deg D}D + H|\). 
\end{proof}

    Theorem~\ref{thm:unlikely_splitting_rank1} gives a specific criterion, depending on a single splitting profile \(\spp^0\), to determine when unlikely splitting profiles arise when we allow the ramification degrees to vary over the rank \(1\) subgroup of \(\Z^r\) generated by \(\be(\spp^0)\). Restricting to a rank \(1\) subgroup of ramification degrees is a crucial assumption that allows us to leverage the Manin--Mumford conjecture.  The more general problem of studying the variation over high rank subgroups of \(\Z^r\) relates
    to intersections that arise in the (still open) Zilber--Pink conjecture. Conditional on the Zilber--Pink conjecture (or rather a special case given in Conjecture~\ref{conj:GeneralizedMM}), we obtain the following generalization of Theorem~\ref{thm:unlikely_splitting_rank1} to arbitrary splitting profiles.
 \begin{theorem}\label{thm:unlikely_splitting}
    Let \(C\) be a nice curve over a number field \(k\), let \(D\in \Pic_C(k)\) be a divisor of  positive degree and let \(H \subset J_C\) be an abelian subvariety.  Let \(\spp^{{0}} = \{(e_i^0, f_i)\}\) be a splitting profile such that \(\deg(D)|e^0_if_i\) for all \(i\). 
    
    Assume Conjecture~\ref{conj:GeneralizedMM}, that \(J_C\) has distinct isogeny factors, and that there exists an \(\tilde{\be}\in \Z^r\) such that for every abelian subvariety \(B\subset J_C/H\) and every torsion point \(\tau\in (J_C/H)^r\), either 
    \begin{itemize}
        \item \(\dim (\pi_H^r \circ \iota_D \circ [\Diag(\be^0)])(W^{\bff})\cap (\tau + B^r) < \dim B\); or 
        \item \([\tilde{\be}]\tau\notin B\). 
    \end{itemize}

     Then there exists a finite set of subgroups \(M_1, \dots, M_s\subsetneq \Z^r\) with \(\tilde{\be} \not \in M_i\) for every subgroup \(M_i\) of finite index (which implies that \(\bigcup_i M_i \subsetneq \Z^r\)) such that for each \(\be \in \Diag(\be^0)(\Z^r_{{>0}}\smallsetminus \bigcup M_i)\), the splitting profile \(\spp  = \{(e_i, f_i)_{i=1}^r\}\) is \emph{unlikely} in \(\left|\frac{\deg(\spp)}{\deg(D)}D + H\right|\).
\end{theorem}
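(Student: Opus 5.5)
The plan is to reduce Theorem~\ref{thm:unlikely_splitting} to Theorem~\ref{thm:ConsequenceZPDistinctIsogeny}, applied to a suitable subvariety, and then to Proposition~\ref{prop:JacIntersectionEmptyImpliesUnlikely}; this mirrors the proof of Theorem~\ref{thm:unlikely_splitting_rank1}. Set \(A \colonequals J_C/H\) and
\[
Z \colonequals (\pi_H^r\circ\iota_D\circ[\Diag(\be^0)])(W^{\bff}_C)\subset A^r.
\]
Since \(W^{\bff}_C = W^{f_1}_C\times\cdots\times W^{f_r}_C\) is integral and \(Z\) is the image of it under a morphism, \(Z\) is an integral subvariety. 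We base change to \(\C\), which is harmless because emptiness of the \(k\)-varieties in~\eqref{eq:JacIntersection} can be checked geometrically. We also need \(A\) to have distinct isogeny factors. This holds because \(A\) is a quotient of \(J_C\), so it is isogenous to a complementary abelian subvariety of \(J_C\). Its isogeny factors are therefore a sub-multiset of those of \(J_C\), and so they are distinct.

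The hypothesis on \(\tilde{\be}\) in Theorem~\ref{thm:unlikely_splitting} is verbatim the hypothesis of Theorem~\ref{thm:ConsequenceZPDistinctIsogeny} for this \(Z\subset A^r\). Assuming Conjecture~\ref{conj:GeneralizedMM}, that theorem yields proper subgroups \(M_1,\dots,M_s\subsetneq\Z^r\) with three properties. First, \(Z\cap A^r[\bw]=\emptyset\) for every \(\bw\in\Z^r\smallsetminus\bigcup_i M_i\). Second, \(\bigcup_i M_i\subsetneq\Z^r\). Third, \(\tilde{\be}\notin M_i\) whenever \(M_i\) has finite index. These are exactly the subgroups claimed in the statement.

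Now let \(\be\in\Diag(\be^0)(\Z^r_{>0}\smallsetminus\bigcup_i M_i)\) and write \(\bw\colonequals\be/\be^0\). By construction \(\bw\) lies in \(\Z^r_{>0}\) but not in \(\bigcup_i M_i\), so \(Z\cap A^r[\be/\be^0]=\emptyset\). This is precisely the emptiness of~\eqref{eq:JacIntersection}. Proposition~\ref{prop:JacIntersectionEmptyImpliesUnlikely} then says that \(\spp=\{(e_i,f_i)\}\) is unlikely in \(\left|\frac{\deg(\spp)}{\deg(D)}D+H\right|\), which is the conclusion.

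The main point needing care is bookkeeping rather than mathematics. One should confirm that the variable \(\be\) in Theorem~\ref{thm:ConsequenceZPDistinctIsogeny} corresponds to the quotient \(\be/\be^0\) and not to \(\be\) itself. The \(\tilde{\be}\) condition is phrased for this same \(Z\), so no translation of \(\tilde{\be}\) is needed. A second check is that Theorem~\ref{thm:ConsequenceZPDistinctIsogeny} requires nothing beyond distinct isogeny factors and integrality of \(Z\). In particular, the condition \(\dim Z<\dim A\) is not a separate hypothesis: it follows from the \(\tilde{\be}\) hypothesis by taking \(B=A\) and \(\tau=0\), as in that theorem's proof.
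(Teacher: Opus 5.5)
Your proposal is correct and follows the paper's proof: apply Theorem~\ref{thm:ConsequenceZPDistinctIsogeny} to \(Z = (\pi_H^r\circ\iota_D\circ[\Diag(\be^0)])(W^{\bff})\subset (J_C/H)^r\) in the variable \(\be/\be^0\), then conclude with Proposition~\ref{prop:JacIntersectionEmptyImpliesUnlikely}. Your extra checks are also right and fill in details the paper leaves implicit: that \(J_C/H\) inherits distinct isogeny factors via Poincar\'e reducibility, and that \(Z\) is geometrically integral, so the theorem over \(\C\) applies.
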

\begin{proof}
    Applying Theorem~\ref{thm:ConsequenceZPDistinctIsogeny} to 
    \[Z \colonequals (\pi_H^r \circ \iota_D \circ [\Diag(\be^0)])(W^{\bff}) \subset (J_C/H)^r,\]
    there exist finitely many proper subgroups \(M_1, \dots, M_s \subset \Z^r\) with \(\bigcup_{i=1}^s M_i \neq \Z^r\) such that \(Z \cap \bigcup_{\be/\be^0 \not\in \cup M_i} (J_C/H)^r[\be/\be^0] = \emptyset\).  The conclusion follows from Proposition~\ref{prop:JacIntersectionEmptyImpliesUnlikely}.
\end{proof}

\subsection{Proofs of Theorems in the introduction}\label{sec:ProofsOfMainTheorems}
Theorems~\ref{thm:IntroTotallyRamified} and~\ref{thm:IntroRank1Splitting} are obtained as corollaries of Theorem~\ref{thm:unlikely_splitting_rank1} (Section~\ref{sec:CorOfUnlikely_rank1}). Theorem~\ref{thm:IntroUnlikelySplitting} is obtained as a corollary of Theorem~\ref{thm:unlikely_splitting} (Section~\ref{sec:CorOfUnlikelySplitting}). 

\subsubsection{Corollaries of Theorem~\ref{thm:unlikely_splitting_rank1}}\label{sec:CorOfUnlikely_rank1}
\begin{proof}[Proof of Theorem~\ref{thm:IntroTotallyRamified}]
    Since the Jacobian of \(C\) has rank \(0\), \(\Pic^i(C)\) is finite for every \(i \in \Z\). Set \(w \colonequals \textup{index}(C)\) and let \(\{[D_1], \dots, [D_n]\} = \Pic^w(C)\), where \(n = \#\Pic^0(C) = \#\Pic^w(C)\), since \(\Pic^w(C) \neq \emptyset\).

    By assumption, \([w](C) \subset \Pic^w_{C}\) does not contain \([D_j]\) for any \(1 \leq j \leq n\), since \(D_j\) is \(k\)-rational. Thus we may apply Theorem~\ref{thm:unlikely_splitting_rank1} with \(D = D_j, r = 1,\spp^0 = \{(w, 1)\}\) and \(H =0\). Then we obtain a positive integer \(m_j\) such that total ramification is unlikely in \(|dD_j|\) for all \(d\) coprime to \(m_j\).

    Let \(m = \lcm(m_1, \dots, m_n, {n})\). Let \(d\) be coprime to \(m\). Since \(d\) is coprime to \(n = \#\Pic^0(C)\), the multiplication-by-\(d\) map on \(\Pic^0\) is an isomorphism.  Consider a degree \(dw\) point \(x \in C\).  Then \([x] \in \Pic^{dw}(C)\), and so \([x - dD_1] \in \Pic^0(C)\) is of the form \(d[D_0]\) for some \([D_0] \in \Pic^0(C)\).  Thus \([x] = d[D_1 + D_0]\). Since \([D_1 + D_0] \in \Pic^w(C)\), we conclude that the degree \(dw\) points are contained in \(\cup_{j=1}^n |dD_j|\), and in each linear system total ramification is unlikely. Hence total ramification is unlikely for all degree \(dw\) points.    
\end{proof}
\begin{proof}[Proof of Theorem~\ref{thm:IntroRank1Splitting}]
    Set \(w \colonequals \textup{index}(C)\). Since the Jacobian of \(C\) has rank \(0\), \(\Pic^w(C)=\{[D_1], \dots, [D_n]\}\), where \(n = \#\Pic^0(C)\).

    Since \(\indx(C) \mid e^0_if_i\) by assumption, we have, \(\deg(D_j)|e^0_if_i\) for all \(j\) and \(i\). Further, since there are no points of degree at most \(\deg(\spp^0)\), then \([\frac{\deg(\spp^0)}{\deg(D_j)}D_j]\notin W^{\deg(\spp^0)}_C\) (and hence not in \([\be^0]W^{\bff}_C \subset W^{\deg(\spp^0)}_C\)). Thus we may apply Theorem~\ref{thm:unlikely_splitting_rank1} with \(D = D_j, H = 0\) to obtain a positive integer \(m_j\) such that the splitting profile \(\spp=\{(de^0_i, f_i)_{i=1}^r\}\) is unlikely in \(|d\frac{\deg(\spp^0)}{\deg(D_j)}D_j|\) for all \(d\) coprime to \(m_j\). 

    Let \(m \colonequals \lcm\left(m_1, \dots, m_n, {n}\right)\). Then if \(d\) is coprime to \(m\), it is coprime to \(m_j\) for all \(j\) and hence the splitting profile \(\spp=\{(de^0_i, f_i)_{i=1}^r\}\) is unlikely in \(|\frac{\deg(\spp)}{\deg(D_{j})}D_j|\) for all \(j\). Further, since \(d\) and \(\frac{\deg(\spp^0)}{\deg(D_{j})}\) are coprime to \(n\) (using that \(n\) divides \(\#J_C(k)\)), every point of degree \(\deg(\spp)\) is linearly equivalent to \(\frac{d\deg(\spp^0)}{\deg(D_j)}D_j\) for some \(j\). Hence, the splitting profile \(\spp\) is unlikely in \(C\).
\end{proof}

The careful reader will note that the proofs of Theorem~\ref{thm:IntroRank1Splitting} and~\ref{thm:IntroTotallyRamified} are very similar. Indeed, Theorem~\ref{thm:IntroTotallyRamified} could almost be a corollary of Theorem~\ref{thm:IntroRank1Splitting} by taking \(\spp^0= \{(\indx(C), 1)\} \) except that we have weakened the hypothesis that there are no points of degree at most \(\deg(\spp^0)=\indx(C)\) by instead assuming 
exactly what the proof needs: that \([\indx(C)] C \cap \Pic^{\indx(C)}(C) = \emptyset\).  This is the minimal hypothesis necessary for Theorem~\ref{thm:IntroTotallyRamified} (see Corollary~\ref{cor:Thm1.1Sharp}).

\subsubsection{Corollaries of Theorem~\ref{thm:unlikely_splitting}}\label{sec:CorOfUnlikelySplitting}
\begin{proof}[Proof of Theorem~\ref{thm:IntroUnlikelySplitting}]
    Since the Jacobian of \(C\) has rank \(0\), there are finitely many divisor classes of a given positive degree. Set \(w\colonequals \textup{index}(C)\) and let \(D_1, \dots, D_n\) be representatives for the complete list of \(k\)-rational divisor classes of degree \(w\).
     By assumption, \(\deg(D_j)|e^0_if_i\) for all \(j\) and \(i\). 
    Without loss of generality, assume that \(e^0_1f_1 = \min_i(e^0_if_i)\). Since we assumed that \(C\) has no points of degree at most \(\min(e^0_if_i) = e_1^0f_1\), we know that \(\frac{e^0_1f_1}{\deg(D_j)} D_j\) is ineffective for all \(j\).  Hence, for each \(j\), the locus 
    \[ (\iota_{D_j} \circ [\Diag(\be^0)])(W^{\bff}) \subset J_C^r\] 
    does not intersect \(\pr_1^{-1}(0)\), where \(\pr_1 \colon J_C^r \to J_C\) denotes projection onto the first factor. 

    Write \(s_0\) for the number of prime divisors of \(\#J_C(k)\).
    For each such prime divisor \(p_i|\#J_C(k)\), \(1 \leq i \leq s_0\), let \(M^0_i \subset \Z^r\) be the finite index subgroup of \(\Z^r\) consisting of tuples \((x_1, \dots, x_r)\) such that
    \[\sum_{\ell=1}^r \left(\frac{e_{\ell}^0f_{\ell}}{\indx(C)}\right) x_{\ell} \equiv 0 \pmod{p_i}.\]  
    If \(\be/\be^0 \in \Z^r \smallsetminus \cup_iM^0_i\), then \(\deg(\spp)/\indx(C) = \sum_{\ell=1}^r \left(\frac{e_{\ell}f_{\ell}}{\indx(C)}\right)\) is relatively prime to \(\#J_C(k)\) and so every point of degree \(\deg(\spp)\) is linearly equivalent to \(\frac{\deg(\spp)}{\deg(D_j)} D_j\) for some \(j\). 
    By our assumption that \(e_1^0f_1/ \indx(C)\) is relatively prime to \(\#J_C(k)\), we know that the \(M^0_i\) does not contain the first standard basis vector for any \(i\). 

    We claim that we can apply Theorem~\ref{thm:unlikely_splitting} for each \(D = D_j\) with \(H=0\) and \(\tilde{\be} = (1, 0, \dots, 0)\) the first standard basis vector.  Since \(J_C\) is geometrically simple by assumption, the only abelian subvarieties \(B \subset J_C\) that we need to consider are \(B = J_C\) and \(B = 0\).  For \(B = J_C\), we have \(\tau + B^r = B^r\) for every torsion point \(\tau\), and 
    \[
     \dim (\iota_D \circ [\Diag(\be^0)])(W^{\bff}) \cap B^r =\dim (\iota_D \circ [\Diag(\be^0)])(W^{\bff}) = \sum_{i=1}^r f_i < g = \dim B
    \]
    by assumption.  For \(B = 0\) and \(\tau \in J_C^r\) a torsion point, we have \(\dim ((\iota_D \circ [\Diag(\be^0)])(W^{\bff}) \cap (\tau + B^r)) \geq \dim B = 0\) if and only if \(\tau \in {(\iota_D \circ [\Diag(\be^0)])(W^{\bff})}\). Thus, we must show that whenever \(\tau \in (\iota_D \circ [\Diag(\be^0)])(W^{\bff})\), then \([\tilde{\be}]\tau \notin B = 0\). This follows since if \(\tau \in (\iota_D \circ [\Diag(\be^0)])(W^{\bff})\), then \([\tilde{\be}](\tau) = {\pr}_1(\tau)  \in {\pr}_1(\iota_D \circ [\Diag(\be^0)])(W^{\bff})\), and, as explained in the first paragraph, does not contain \(0\) by our assumptions. 
    Hence, for each \(j\), Theorem~\ref{thm:unlikely_splitting} with \(D = D_j\) and \(H = 0\) implies the existence of subgroups \(M^j_1, \dots, M^j_{s_j} \subset \Z^r\) such that \(\tilde{\be} \not\in M^j_i\) for each rank \(r\) subgroup \(M^j_i\) and for all
    \(\be \in \Diag(\be^0) (\Z^r \smallsetminus \bigcup M_i^j)\), the splitting profile \(\spp = \{(e_i, f_i)\}\) is unlikely in \(\left|\frac{\deg{\spp}}{\deg{D_j}} D_j\right|\).

    Consider the following finite unions of subgroups: 
    \[
    M \colonequals  \bigcup_{j={{0}}}^n \bigcup_{i=1}^{s_j} M^j_{i} \quad \textup{and} \quad \tilde{M} \colonequals \bigcup_{j={{0}}}^n \bigcup_{\substack{i=1\\\rank(M_i^j) = r}}^{s_j}M^j_{i}.
    \]
    By construction, \(\tilde{M}\not\ni \tilde{\be}\), and so \(\tilde{M}\) cannot equal all of \(\Z^r\). Since \(\Z^r\smallsetminus \tilde{M}\) must contain a coset of a finite index subgroup~\cite{VirayVogt}*{Lemma 4.3.8} and \(M\) is the union of \(\tilde{M}\) and finitely many subgroups of rank strictly less than \(r\), the finite union \(M\) also cannot equal all of \(\Z^r\) (even though it may contain \(\tilde{\be}\)).
    We claim that the theorem holds with this proper finite union of subgroups \(M\).
    Indeed, if \(\be \in \Diag(\be^0)(\Z^r \smallsetminus M)\), then every point of degree \(\deg(\spp)\) is in \(\left|\frac{\deg{\spp}}{\deg{D_j}} D_j\right|\) for some \(j\), so the result follows by Theorem~\ref{thm:unlikely_splitting}.
\end{proof}

\subsubsection{Proof of Table~\ref{table:genus2}}\label{sec:ProofOfTable}
In this Section we show how Theorem~\ref{thm:unlikely_splitting_rank1} yields the following explicit computational result:
\begin{prop}\label{prop:proof_of_table}
    Let \(C\) be a curve from Table~\ref{table:genus2}. Then for every odd \(d\), the set of places
    \[
\calT_d(C) := \left\{p : \exists x\in C, \; \deg(x) = d, \;\kk(x)/\Q\textup{ totally ramified at }p\right\}
\]
is finite. Moreover, for \(d = 3, 5\), the sets \(\calT_{d}(C)\cap \{p : p\nmid 2\cond(C)\}\) are as listed in column \(2\) and \(3\) of Table~\ref{table:genus2}.
\end{prop}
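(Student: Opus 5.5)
The plan is to deduce finiteness of \(\calT_d(C)\) for odd \(d\) from Theorem~\ref{thm:unlikely_splitting_rank1} with \(m = 2\), following the proof of Theorem~\ref{thm:IntroTotallyRamified}. Then I would compute the finite sets for \(d=3,5\) via Lemmas~\ref{lem:SplProfileImpliesFvPoints} and~\ref{lem:SymIntersectionEmptyImpliesUnlikely}.

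For each curve in Table~\ref{table:genus2}, I first verify with \texttt{Magma}, using the LMFDB data, four facts:
\begin{itemize}
\item[] \(J_C(\Q)\) is finite;
\item[] \(\#J_C(\Q)\) is a power of \(2\) (or at least odd-part trivial);
\item[] \(C(\Q)=\emptyset\), so that, with index \(1\), every class \([D_j]\in\Pic^1(C)\) is ineffective;
\item[] the torsion structure.
\end{itemize}
For each \(D_j\), set \(Z_j \colonequals \iota_{D_j}(C)\subset J_C\), a curve not containing \(0\). The key step is to show that for every odd \(d\), \(Z_j\cap J_C[d]=\emptyset\). In other words, the integers \(m_i\) produced by Proposition~\ref{prop:consequenceMM} can all be taken to be powers of \(2\) (indeed \(m_i=2\)).

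Since \(g=2\), \(Z_j\) is a genus-\(2\) curve embedded in its Jacobian, so it is not a torsion translate. By Raynaud's theorem, \(Z_j\cap (J_C)_{\tors}\) is therefore finite. I need to compute it and check that all its points have even order. For this I would use the known effective algorithms for torsion points on genus \(2\) curves embedded in their Jacobians (e.g.\ via the Coleman/Poonen approach: bounding the orders of torsion points on \(C\) by reducing at two small primes of good reduction and comparing Galois action, or Stoll's algorithm). Concretely, the plan is:
\begin{itemize}
\item[] use Frobenius eigenvalues at a couple of good primes \(p\) to show that torsion on \(Z_j\) of order prime to \(2p\) injects into \(Z_j(\F_{p^n})\) with Frobenius acting as a root of unity;
\item[] this bounds the odd part of orders of torsion points;
\item[] then enumerate the remaining odd-order candidates in finitely many division fields, or mod \(p\), to rule them out.
\end{itemize}
Having shown this, Theorem~\ref{thm:unlikely_splitting_rank1} with \(\spp^0=\{(1,1)\}\), \(H=0\) and \(m=2\) gives unlikeliness in each \(|dD_j|\). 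Since \(d\) is odd and \(\#\Pic^0(C)\) is a power of \(2\), every degree-\(d\) point lies in some \(|dD_j|\), exactly as in the proof of Theorem~\ref{thm:IntroTotallyRamified}. This proves finiteness. I expect this torsion computation to be the main obstacle, since the odd-order exclusion must be certified rigorously for twenty curves.

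For the explicit sets with \(d=3,5\), I use Lemma~\ref{lem:SplProfileImpliesFvPoints}. If \(p\nmid 2\cond(C)\) is totally ramified in \(\kk(x)\) with \(\deg x=d\), then there is \(\bar x\in C(\F_p)\) with \(d[\bar x]=d[D_{j,\F_p}]\) for some \(j\). Equivalently, \(\iota_{D_j}(\bar x)\) is \(d\)-torsion mod \(p\). So \(\calT_d\) away from bad primes is contained in the support of the \(0\)-dimensional scheme \(\iota_{D_j}(C)\cap J_C[d]\) over \(\Z[1/2\cond]\), which is empty generically. This support is computed by writing \(C\) in short Weierstrass form \(y^2=f(x)\) and parametrizing \(\bar x-D_j\) in Mumford coordinates. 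For each \(j\), I impose that \(d(\bar x - D_j)\sim 0\), i.e.\ that \(dx - dD_j\) is principal via Cantor's algorithm. Eliminating variables gives a polynomial system whose Gröbner basis over \(\Z\) contains a nonzero integer \(N_{j,d}\); the primes dividing \(N_{j,d}\) bound the exceptional set. Finally, for each prime dividing some \(N_{j,d}\), I use Theorem~\ref{thm:Reconstruction} (the lifting result) to confirm that it genuinely occurs, constructing a global degree-\(d\) point in \(|dD_j|\) totally ramified at \(p\). Where lifting fails, the prime is excluded by direct local analysis. This yields the listed columns.
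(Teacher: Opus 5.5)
Your finiteness argument is the paper's: Theorem~\ref{thm:unlikely_splitting_rank1} with \(\spp^0=\{(1,1)\}\), \(H=0\), \(m=2\), together with the observation that odd-degree points lie in \(|dD|\). The only difference is how you certify that \(\iota_D(C)\) has no odd-order torsion.

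The paper avoids a fresh torsion computation on \(Z_j\). The Weierstrass points split as two cubic points \(w,w'\), so the unique degree-one class \(D=[w]-K_C\) satisfies \(2D=K_C=2[p]\) for every geometric Weierstrass point \(p\). Hence \(\iota_D(C)\) is the translate of \(C-p\) by the \(2\)-torsion point \([p]-D\). Poonen's computation that the torsion packet of \(p\) consists only of the six Weierstrass points then shows that every torsion point on \(\iota_D(C)\) has order exactly \(2\). Your direct computation would also work, but it is heavier to certify.

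Similarly, your Cantor/Mumford-coordinate elimination computes the same candidate primes as the paper's elimination on explicit linear systems. For \(d=3\) the paper asks for a cubic in the pencil \(|3D|=|w|\) to be a scalar times a cube. For \(d=5\) it asks for a quintic cut out by a \((1,1)\)-curve in \(\PP^1\times\PP^1\) to be a fifth power.

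The genuine gap is in the \(d=3\) part of your confirmation step. You propose to certify each candidate prime with Theorem~\ref{thm:Reconstruction}, but that theorem requires \(|D_{\F_v}|\) to be very ample. A degree-\(3\) class on a genus-\(2\) curve has \(h^0=2\), so \(|3D|\) is a pencil and is never very ample. For \(d=3\) the lifting theorem therefore never applies. Your fallback, ``where lifting fails, the prime is excluded,'' would then discard primes that are genuinely realized, e.g.\ \(2027\) for 9977.a.9977.1, \(35153\) for 19170.c.76680.1, and \(3\) for 21989.a.21989.1 and 45175.a.45175.1.

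What is missing is a direct lift along the pencil. The paper uses the explicit morphism \(z\mapsto[g_0(z):y(z)]\) given by \(|w|\). For each candidate \(p\), it checks that the offending \([t_0:t_1]\) lifts to a \(\Q_p\)-point at which \(t_1^2g_0-t_0^2g_1\) is irreducible and defines a totally ramified cubic extension. It then globalizes via Hilbert irreducibility with weak approximation.

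Alternatively, you could observe that the local argument in Proposition~\ref{prop:LocalReconstruction} still works for a finite morphism \(C\to\PP^1\) when \(r=1\). Only the constant term \(a_0\) is needed, since the bad choices form a single residue class. You would then have to check that the pencil remains basepoint free mod \(p\).

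For \(d=5\), \(|5D|\) is very ample and your plan is fine. You should still check that the candidate point \(\bar x\) is \(\F_p\)-rational, and not merely \(\overline{\F}_p\)-rational, for each prime dividing your \(N_{j,d}\); otherwise that prime is excluded by Lemma~\ref{lem:SplProfileImpliesFvPoints}.
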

\begin{proof}
    From the L-functions and Modular Forms Database (LMFDB) and Poonen's \href{https://math.mit.edu/~poonen/papers/torsion.gp}{code} to compute torsion packets~\cites{LMFDB, Poonen-TorsionPackets, PARI}, each curve listed in Table~\ref{table:genus2} has the following properties (see below for more details):
\begin{enumerate}
    \item the Weierstrass points of \(C\) split into two degree \(3\) points, \(w, w'\) (which necessarily implies \([w] = [w']\) and \(2[w] = 3K_C\)), \label{eq:Weierstrass}
    \item the \(\Q\)-points of the Picard scheme are \(\{ nK_C, nK_C - [w] :n \in \Z\}\),\label{eq:PicardRank}
    \item for any geometric Weierstrass point \(p\in C(\kbar)\), the intersection \((C - p)\cap (J_C)_{\tors} \) consists only of the Weierstrass points.\label{eq:TorsionPacket}
\end{enumerate}
    Indeed, condition~\eqref{eq:Weierstrass} can be verified by taking a simple Weierstrass model \(y^2 = f(x)\) for \(C\) (which is given in the LMFDB) and observing that over \(\mathbb{Q}\), the polynomial \(f(x)\) factors as a product of two irreducible cubics. Then, given~\eqref{eq:Weierstrass}, which in particular implies that \(C\) has index \(1\), Condition~\eqref{eq:PicardRank} is equivalent to the condition that \(J_C(\mathbb{Q}) = \{O\}\), which is computed in the LMFDB. Finally,~\cite{Poonen-TorsionPackets} gives an algorithm to compute \((C - p)\cap (J_C)_{\tors}\), and we use Poonen's PARI/GP implementation to verify that this set of geometric points has order \(6\). Since this set contains every Weierstrass point (the difference of any two Weierstrass points is \(2\)-torsion), this proves that no other points are contained in this intersection.

    Now we will use these properties together with Theorem~\ref{thm:unlikely_splitting_rank1} to prove the proposition.  Condition~\eqref{eq:PicardRank} says that there is a unique divisor class in every degree, and they are all integer multiples of \(2K_C - [w]\). In addition, \(2K_C - [w] = [w] - K_C\) is not effective and so we may apply Theorem~\ref{thm:unlikely_splitting_rank1} with \(D\colonequals 2K_C - [w]\), \(\spp^0\colonequals\{(1,1)\}\), and \(H=0\). We claim that \(\iota_D(C)\cap J_C[d] = \emptyset\) for all odd \(d\), which implies that in Theorem~\ref{thm:unlikely_splitting_rank1} we may take \(m = 2\). Thus we obtain the desired conclusion about finiteness. We will prove this claim and then explain how we compute the sets \(\calT_{d}(C)\cap \{p : p\nmid 2\cond(C)\}\)
    
    Let \(q\in C(\kbar)\) be such that \([q] - [D]\in (J_C)_{{\tors}}\). Then \(2[q]-K_C\) is torsion, so for any geometric Weierstrass point \(p\in C(\kbar)\), the difference \([q]-[p]\) is torsion. By Condition~\eqref{eq:TorsionPacket}, \(q\) must be a geometric Weierstrass point, so \(2([q]-D) = K_C - K_C = 0\). Since \(D\) is ineffective, \([q]\ne D\) and so \(\iota_D(C)\cap J_C[d] = {\emptyset}\) for all odd \(d\).

    Now we explain how to compute the sets  \(\calT_{d}(C)\cap \{p : p\nmid 2\cond(C)\}\). Recall that condition~\eqref{eq:Weierstrass} implies that \(C\) has a model of the form \(y^2 = g_0(x)g_1(x)\) for irreducible cubics \(g_0,g_1\). Moreover, the polynomials \(g_0,g_1\) are separable and relatively prime for every odd prime not dividing the conductor of \(C\). The complete linear system \(|3D| = |w|\) corresponds to the morphism sending a point \(z\in C\) to \([g_0(z) : y(z)] = [y(z):g_1(z)]\). Thus if the fiber above a point \([t_0:t_1]\in \PP^1\) give a point that is totally ramified at \(v\) then the cubic polynomial \(t_1^2g_0 - t_0^2g_1\) is a scalar multiple of a perfect cube modulo \(v\). One can compute the locus of such \(v\) using elimination ideals (see the \texttt{Magma}~\cite{Magma} code in the \href{https://github.com/b-viray/unlikely-ramification/}{Github repository} for this paper). Further, for each of the \(v\) that appear, we verify that this value of \([t_0:t_1]\) can be lifted such that \(t_1^2g_0 - t_0^2g_1\) is irreducible over \(k_v\) and defines a totally ramified field extension.

    For  \(d=5\), we study the linear system \(|5D|\), which is very ample. Since \(h^0(5D) = h^0(2D)h^0(3D)\), the morphism corresponding to \(|5D|\) factors as the composition
    \[
        C\xrightarrow{(\varphi_{|2D|}, \varphi_{|3D|})} \PP^1\times\PP^1 \xrightarrow{(\varphi_{(1,1)})} \PP^3
    \]
    Thus, a hyperplane section corresponds to intersecting \(C\) (viewed as a \((2,3)\) curve) with a \((1,1)\) curve. We wish to apply Theorem~\ref{thm:Reconstruction}, so we must find a hyperplane section modulo \(v\) that meets \(C\) at an \(\F_v\)-point with multiplicity \(5\). Such a hyperplane section necessarily corresponds to an integral \((1,1)\) curve \(Z\simeq \PP^1\). Thus, we may compute the intersection of \(C\) with \(Z\) as the vanishing of a single variable degree \(5\) polynomial on \(Z\). By Lemma~\ref{lem:SplProfileImpliesFvPoints}, a place of good reduction \(v\in \calT_5(C)\) must give rise to a \(Z\) such that the associated degree \(5\) polynomial is a scalar multiple of a fifth power. As above, this can be computed using elimination ideals (see the \texttt{Magma}~\cite{Magma} code in the \href{https://github.com/b-viray/unlikely-ramification/}{Github repository} for this paper). For each of the \(v\) that appear, we could verify that the degree \(5\) polynomial can be lifted to one that is irreducible over \(k_v\) and defines a totally ramified field extension, as we did in the degree \(3\) case. Alternately, since \(|5D|\) is very ample, we can appeal to our lifting result (Theorem~\ref{thm:Reconstruction}) that we prove in the next section.
\end{proof}

Given a genus \(2\) curve \(C\) which has properties~\eqref{eq:Weierstrass} and~\eqref{eq:PicardRank}, the proof of Proposition~\ref{prop:proof_of_table} shows that if \(\iota_D(C)\) contains a (geometric) point of odd order \(d\geq 3\), then the Weierstrass torsion packet contains a (geometric) point of order \(2d\).  

\begin{example}
    Let \(C\) be the genus \(2\) curve with LMFDB label \href{https://www.lmfdb.org/Genus2Curve/Q/52969/b/52969/1}{52969.b.52969.1}. This curve has a geometric point whose difference with a geometric Weierstrass point is \({6}\)-torsion in the Jacobian, and whose difference with \(D\) is \(3\)-torsion. One can compute the set \(\calT_3(C)\) using elimination theory as described above, and in doing so, we determine that the set \(\calT_3(C)\cap \{p\nmid 2\cdot 52969\}\) consists of all primes different from \(2\) and \(23\) such that \(-7\in \F_p^{\times2}\), i.e., all primes different from \(2\) and \(23\) that are congruent to \(1, 2,\) or \(4\) modulo \(7\).
\end{example}

The methods of this paper do not directly shed light on the sets \(\calT_d(C)\) themselves (beyond finiteness). For example, to the best of our knowledge, the following questions remain open.
\begin{enumerate}
    \item Can the sets \(\calT_d(C)\) be arbitrarily large? If so, then for \(d\) in a fixed congruence class modulo \(m(C)\), what are the asymptotics of \(\#\calT_d(C)\)? 
    \item What is the Dirichlet density of \(\cup_{(d,m(C)) = 1} \calT_d(C)\)? We can computationally verify (see code in the \href{https://github.com/b-viray/unlikely-ramification/}{Github repository} for this paper) that for the curve \(C\) with LMFDB label \href{https://www.lmfdb.org/Genus2Curve/Q/7676/a/7676/1}{7676.a.7676.1}, the prime \(31\) is not in \(\calT_d(C)\) for any odd integer \(d\) (meaning there are no points on \(C\) of odd degree where \(31\) is totally ramified in the residue field). In particular, this example shows that \(\cup_{(d,m(C)) = 1} \calT_d(C)\) can be a proper subset of all primes of good reduction. However, beyond this, little is known.
\end{enumerate}

%%%%%%%%%%%%%%%%%%%%%%%%%%%%%%%%%%%%%%%%%%%%%%%%%%%%%%%%%%%%%%%%%%%%%%%%%%%%%%%%%%%%%%%%%%%%%%%%%%%%%%%%%%
\section{Realizing splitting profiles}\label{sec:Reconstruction}
%%%%%%%%%%%%%%%%%%%%%%%%%%%%%%%%%%%%%%%%%%%%%%%%%%%%%%%%%%%%%%%%%%%%%%%%%%%%%%%%%%%%%%%%%%%%%%%%%%%%%%%%%%

In this section, we explore the complementary problem of showing that certain splitting profiles can be realized. One of the main techniques in this section is a lifting result: that given an effective representative \(D\) of a sufficiently positive (globally defined) divisor class over a finite field \(\F\), the representative can be lifted to a point over a global field in the same linear system with local behavior prescribed by the divisor \(D\). Precisely, we prove the following.
\begin{theorem}\label{thm:Reconstruction}
    Let \(C/k\) be a nice curve, let \(v\) be a place of \(k\) such that \(C\) has good reduction at \(v\),  let \(D\) be a divisor on \(C\) of positive degree {\(d\)}, let \(\spp\) be a splitting profile with \(\deg(\spp) = {d}\) and let \(\mathbf{e} = \mathbf{e}(\spp), \mathbf{f}=\mathbf{f}(\spp)\) be the associated tuples.  Assume that \(r(\spp) < \#\F_v\), \(h^0(C, D) = h^0(C_{\F_{v}}, D_{\F_{v}})\) and \(|D_{\F_{v}}|\) is very ample. If there exist distinct points \(\overline{x}_1, \dots \overline{x}_{r(\spp)}\in C_{\F_v}\) with \((\deg(\overline{x}_i))_i = \mathbf{f}\) such that
            \begin{equation}\label{eq:sum_eipi}
                 \sum_i e_{i} \overline{x}_i \sim D_{{\F_v}}.
            \end{equation}
    then there exists a degree \(d\) point \(x\in C\) with splitting profile \(\spp\) at \(v\) and with \(x\sim D\).
\end{theorem}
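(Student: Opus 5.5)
The plan is to work in the very ample embedding and lift the divisor \(\sum_i e_i\overline{x}_i\) to a global hyperplane section whose \(v\)-adic factorization is forced by Hensel-type arguments. Since \(C\) has good reduction at \(v\), fix a smooth proper model \(\calC\) over \(\OO_{k_v}\). Because \(h^0(C,D)=h^0(C_{\F_v},D_{\F_v})\), the sections of \(\calO(D)\) over the model form a lattice \(\Lambda\subset \HH^0(C,D)\otimes k_v\) that reduces surjectively onto \(\HH^0(C_{\F_v},D_{\F_v})\). The hypothesis \eqref{eq:sum_eipi} gives a section \(\bar s\in \HH^0(C_{\F_v},D_{\F_v})\) with divisor exactly \(\sum_i e_i\overline{x}_i\). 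The whole proof consists of choosing a lift \(s\in\Lambda\) of \(\bar s\) carefully, and then approximating by a global section in \(\HH^0(C,D)\) using density of \(k\) in \(k_v\).

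The first step is local. For each \(i\), work in the complete local ring of \(\calC\) at \(\overline{x}_i\). After base change to the unramified extension of \(k_v\) of degree \(f_i\), this ring is \(\OO[[t]]\) with \(t\) a local parameter, and \(\bar s\) corresponds to \(u\,t^{e_i}\) with \(u\) a unit. I would choose the lift so that near \(\overline{x}_i\) it looks like \(t^{e_i}-\pi\) (\(\pi\) a uniformizer), an Eisenstein polynomial in \(t\). Then the part of \(\mathrm{div}(s)\) specializing to \(\overline{x}_i\) is a single point of degree \(e_if_i\) over \(k_v\) with ramification \(e_i\) and residue degree \(f_i\). By Weierstrass preparation, this local behaviour only depends on \(s\) modulo \(\pi^2\) (together with the leading unit). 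So it is an open condition in \(\Lambda\), and it survives under small \(v\)-adic perturbation.

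The second step is to achieve all these local conditions simultaneously with one lift. This is where very ampleness enters. The reduction map \(\Lambda\to \HH^0(C_{\F_v},D_{\F_v})\) is surjective, and very ampleness of \(|D_{\F_v}|\) (with \(h^0\) constant) implies that \(\HH^0(D_{\F_v})\) surjects onto the first-order jets \(\calO/\frakm_{\overline{x}_i}^2\) at each point. Hence corrections \(\pi\cdot\bar g\) to the lift can be chosen to adjust the linear term \(\pi\) in each local expansion independently. The independence across the points follows from the separation of points and tangents that very ampleness provides: jets at distinct points are jointly surjective provided the total length is at most the dimension, and otherwise one uses a general section argument. The condition \(r(\spp)<\#\F_v\) is what I would use to guarantee that one section \(\bar g\in \HH^0(D_{\F_v})\) exists which is simultaneously nonvanishing at all \(\overline{x}_i\). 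A finite-field avoidance count needs the hyperplanes \(\{\bar g(\overline{x}_i)=0\}\), \(r\) in number, not to cover the space, and \(r<q\) guarantees this. Taking \(s=\tilde s+\pi \tilde g\) then gives Eisenstein behaviour at every \(\overline{x}_i\). Since the total degree is \(\sum e_if_i=d=\deg D\), these local points exhaust \(\mathrm{div}(s)\).

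Finally, approximate \(s\) \(v\)-adically by a global \(s_0\in\HH^0(C,D)\). Then \(x:=\mathrm{div}(s_0)\sim D\) is a degree \(d\) divisor over \(k\) whose base change to \(k_v\) has exactly the prescribed components. It must still be a single closed point, i.e., irreducible over \(k\). I would get this either by Hilbert irreducibility on the linear system \(|D|\) (using that a general member of a very ample system is an irreducible point, and that thin sets do not contain \(v\)-adic open sets), or by imposing in addition a second auxiliary place \(w\) where the fiber is inert. Then \(\spp_v(\kk(x)/k)=\spp\). The main obstacle I expect is the simultaneous jet-lifting step: making sure very ampleness over \(\F_v\) really gives independent control of the first-order terms at all \(\overline{x}_i\) at once, where the points have high multiplicity \(e_i\) and need not impose independent conditions. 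I would first try to handle this by requiring only nonvanishing of the correction \(\bar g\) at each \(\overline{x}_i\), which suffices for Eisenstein-ness. This reduces the problem exactly to the avoidance count using \(r(\spp)<\#\F_v\).
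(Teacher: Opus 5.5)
This is correct and is essentially the paper's proof. Your Eisenstein condition at each \(\overline{x}_i\) with \(e_i\ge 2\) is exactly the condition \(\ell\notin\frakm_i^2B_{\frakm_i}\) of Proposition~\ref{prop:LocalReconstruction}, which makes the local ring of the section a DVR with residue field \(\kk(\overline{x}_i)\) in which \(\pi\) has valuation \(e_i\); the \(r<\#\F_v\) count avoiding affine hyperplanes is the same, and so is the globalization by density of \(k\) in \(k_v\) plus Hilbert irreducibility with \(v\)-adic conditions (the paper applies it to a pencil rather than to all of \(|D|\)).

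One correction: the bad set for the correction \(\bar g\) at \(\overline{x}_i\) is \(\{\bar g : \bar g(\overline{x}_i)=-\bar\alpha_i\}\), where \(\pi\alpha_i\) is the constant term of your initial lift \(\tilde s\) in the local expansion, not \(\{\bar g : \bar g(\overline{x}_i)=0\}\). This set is still a coset of the kernel of the nonzero evaluation map \(H^0(C_{\F_v},D_{\F_v})\to\kk(\overline{x}_i)\), so the same count \(r\,q^{n}<q^{n+1}\) goes through. Also, your claim that very ampleness gives joint surjectivity onto first-order jets at several points is false in general, but, as you end up noting, only basepoint-freeness at each single point is needed.
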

This is morally a Hensel's Lemma and Hilbert irreducibility argument, although the proof requires care because we are lifting a (possibly) nonsmooth scheme. In Section~\ref{sec:LocalReconstruction}, we prove the nonarchimedean lifting result and in Section~\ref{sec:ReconstructionThm}, we explain how to deduce Theorem~\ref{thm:Reconstruction} from the local result. This lifting result allows us to give many examples of likely splitting profiles; these examples are collected in Section~\ref{sec:LikelyExamples}, along with a proof of Proposition~\ref{prop:IntroOnEAllProfilesLikely}.

%%%%%%%%%%%%%%%%%%%%%%%%%%%%%%%%%%%%%%%%%%%%%%%%%%%%%%%%%%%%%%%%%%%%%%%%%%%%%%%%%%%%%%%%%%%%%%%%%%%%%%%%%%
\subsection{Lifting effective divisors \(p\)-adically}\label{sec:LocalReconstruction}

Let \(K\) be a \(p\)-adic field with valuation ring \(\OO = \OO_K\) and residue field \(\F= \F_K\). Fix a choice of uniformizer \(\pi \in \OO\). Given a smooth \(\OO\)-scheme \(\calC \hookrightarrow\PP^N_{\calO}\) of relative dimension \(1\), the goal is to lift a linear section \(V(\overline{\ell})\subset \PP^N_{\F}\) that meets \(\calC_{\F}\) possibly with multiplicity, to a linear section over \(\OO\) that meets \(\calC\) with multiplicity \(1\) at every point in its support. Precisely, we prove the following

\begin{prop}\label{prop:LocalReconstruction}
    Let \(\calC \hookrightarrow\PP^N_{\OO}\) be a smooth \(\OO\)-scheme of relative dimension \(1\), let \(\spp\) be a splitting profile and let \(\mathbf{e} = \mathbf{e}(\spp), \mathbf{f}=\mathbf{f}(\spp)\) be the associated tuples.  Assume that \(r(\spp) < \#\F\)  and that there exists a linear form \(\bar{\ell} \in H^0(\PP^N_{\F}, \OO(1))\)  such that 
    \(\calC \cap V(\pi, \bar{\ell}) = e_1 \bar{x}_1 +\cdots+e_r \bar{x}_r\) where \(\bar{x}_1, \dots, \bar{x}_r \in \A^N_{\F} \subset \PP^N_{\F}\) are {distinct} points with \([\kk(\bar{x}_i):\F] = f_i\).
    Then there is an open neighborhood of linear forms \(\ell  \in H^0(\PP^N_{\OO}, \OO_{\PP^N}(1))\) such that \(\ell \equiv \bar{\ell} \pmod{\pi}\) and \(\calC_K \cap V(\ell) = x_1 + \dots + x_r\) where \(x_1, \dots, x_r \in \calC_K\) are distinct points and \(\kk(x_i)/K\) has ramification degree \(e_i\) and residual degree \(f_i\).
\end{prop}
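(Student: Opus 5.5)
We reduce to a statement about polynomials over \(\OO\) by working locally at each \(\bar{x}_i\). Since \(\calC\) is smooth of relative dimension \(1\) over \(\OO\), the complete local ring of \(\calC\) along the closed point \(\bar{x}_i\) is isomorphic to \(\OO_{K_i}[[t_i]]\), where \(K_i\) is the unramified extension of \(K\) of degree \(f_i\) and \(t_i\) is a local coordinate. Expanding a linear form \(\ell\) with \(\ell\equiv\bar\ell \pmod \pi\) in this coordinate gives a power series \(\ell_i(t_i)\in \OO_{K_i}[[t_i]]\). Its reduction is \(\bar\ell_i = u\,t_i^{e_i}\) for a unit \(u\), because \(\calC\cap V(\pi,\bar\ell)\) has multiplicity exactly \(e_i\) at \(\bar x_i\). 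By Weierstrass preparation, \(\ell_i = (\text{unit})\cdot P_i(t_i)\) with \(P_i\) a distinguished polynomial of degree \(e_i\) over \(\OO_{K_i}\). The points of \(\calC_K\cap V(\ell)\) that reduce to \(\bar x_i\) therefore correspond to the roots of \(P_i\), viewed over \(K_i\) and then descended to \(K\).

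The goal is to choose \(\ell\) so that each \(P_i\) is an Eisenstein polynomial over \(K_i\). This requires \(P_i(0)\) to have valuation exactly \(1\), i.e. \(\ell(\tilde x_i)\in \pi\OO_{K_i}\smallsetminus \pi^2\OO_{K_i}\), where \(\tilde x_i\) is a lift of \(\bar x_i\) given by \(t_i=0\). Then \(P_i\) is irreducible, and its root generates a totally ramified extension of \(K_i\) of degree \(e_i\). This field has ramification index \(e_i\) and residual degree \(f_i\) over \(K\). It also gives a single closed point \(x_i\) of \(\calC_K\): a closed point of \(\calC_K\) reducing to \(\bar x_i\) has its residue field containing \(\kk(\bar x_i)\) on the residue level, hence containing \(K_i\) by Hensel's lemma. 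So the orbit of the root over \(K\) is one point of degree \(e_if_i\). Since the \(\bar x_i\) are distinct, the \(x_i\) are distinct. Summing degrees gives \(\sum e_if_i=\deg \calC_K\cap V(\ell)\), so every point of the intersection reduces to some \(\bar x_i\), and we obtain exactly \(x_1+\dots+x_r\), each with multiplicity one.

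It remains to achieve the valuation condition simultaneously for all \(i\) and to see that it is open. Write \(\ell=\tilde\ell+\pi m\), where \(\tilde\ell\) is a fixed lift of \(\bar\ell\) and \(m\) is any linear form over \(\OO\). Then \(\ell(\tilde x_i)/\pi \equiv \tilde\ell(\tilde x_i)/\pi+\bar m(\bar x_i) \pmod \pi\), and we need this to be nonzero in \(\kk(\bar x_i)\) for every \(i\). This is the main obstacle: it requires choosing \(\bar m\) to avoid one bad value at each of the \(r\) points. The points \(\bar x_i\) lie in the affine chart, so the constant linear forms \(\bar m=c\) with \(c\in\F\) are available. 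For each \(i\) at most one \(c\in\F\) is bad, since \(c\mapsto c+a_i\) is injective. Thus at most \(r\) values of \(c\) are excluded, and the hypothesis \(r<\#\F\) guarantees a good \(c\).

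Finally, the condition depends only on \(\ell\) modulo \(\pi^2\), and irreducibility of Eisenstein polynomials is stable under such perturbations. Hence every \(\ell\) congruent to the chosen one modulo \(\pi^2\) works, and these form the open neighborhood claimed in Proposition~\ref{prop:LocalReconstruction}.
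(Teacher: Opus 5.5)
Your proof is correct. Its skeleton matches the paper's: perturb a lift of \(\bar\ell\) by \(\pi\) times a linear form, chosen modulo \(\pi\) to avoid at most one bad value per \(\bar x_i\), which is where \(r<\#\F\) enters. The key local lemma, however, is different.

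The paper never completes. In the local ring \(B_{\frakm_i}\) of the affine model it writes \(\frakm_iB_{\frakm_i}=(\pi,g_i)\) and \(z_N=\alpha_i\pi+\beta_ig_i\), and it requires \(\ell\notin\frakm_i^2B_{\frakm_i}\). Then \(B_{\frakm_i}/(\ell)\) is a discrete valuation ring by Lemma~\ref{lem:RLR_condition}. Its residue field is \(\kk(\bar x_i)\), and \(\pi\) has valuation \(e_i\) because \(B_{\frakm_i}/(\pi,z_N)\) has length \(e_i\).

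You instead pass to \(\widehat{\OO}_{\calC,\bar x_i}\cong\OO_{K_i}[[t_i]]\), apply Weierstrass preparation, and ask for an Eisenstein polynomial. Write \(\ell_i=\sum_n c_nt_i^n\). When \(e_i\ge 2\) the two conditions coincide: then \(c_1\in\pi\OO_{K_i}\), so \(\ell_i\notin(\pi,t_i)^2\) if and only if \(v(c_0)=v(\ell(\tilde x_i))=1\). When \(e_i=1\), the paper's condition holds automatically (this is the case \(\beta_i\notin\frakm_i\)), while yours imposes an unneeded but harmless constraint.

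What each route buys:
\begin{itemize}
\item Your version gives an explicit description \(\kk(x_i)=K_i(\alpha_i)\), with \(\alpha_i\) a root of an Eisenstein polynomial over the unramified extension \(K_i\). It also makes the dependence only on \(\ell\bmod\pi^2\), hence openness, immediate.
\item Your choice of a constant \(c\in\F\) is simpler than the paper's count of affine hyperplanes in \(\F^N\).
\item The paper's version avoids Cohen structure theory and Weierstrass preparation. Its condition is exactly regularity of \(\calC\cap V(\ell)\) at \(\bar x_i\).
\end{itemize}

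One step deserves an explicit sentence in your write-up. The scheme \(\calC\cap V(\ell)\) is proper over \(\OO\) with finite special fiber, hence finite over the complete ring \(\OO\). So its local ring at \(\bar x_i\) is already complete and equals \(\OO_{K_i}[t_i]/(P_i)\). This justifies reading off the points of \(\calC_K\cap V(\ell)\) from the roots of \(P_i\). It also shows directly that every such point specializes to some \(\bar x_i\), without needing your degree count.
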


The following criteria for when a quotient of a regular ring is also regular will be helpful in the proof.

\begin{lemma}\label{lem:RLR_condition}
    Let \((R,\mm)\) be a \(2\)-dimensional regular local ring. For any \(\ell\in \mm\smallsetminus \mm^2 \), the quotient \(R/\langle \ell\rangle\) is a \(1\)-dimensional regular local ring.
\end{lemma}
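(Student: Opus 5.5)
Since \(R/\langle \ell\rangle\) is local with maximal ideal \(\mm/\langle \ell\rangle\), the plan is to compute its Krull dimension and the dimension of its cotangent space and check that they agree.

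\textbf{Cotangent space.} Let \(\kappa = R/\mm\). The cotangent space of the quotient is
\[
\frac{\mm/\langle\ell\rangle}{(\mm/\langle \ell\rangle)^2} \;\cong\; \mm/(\mm^2 + \langle \ell\rangle).
\]
Because \(R\) is regular of dimension \(2\), the space \(\mm/\mm^2\) is \(2\)-dimensional over \(\kappa\). The hypothesis \(\ell\notin \mm^2\) says that the image of \(\ell\) in \(\mm/\mm^2\) is nonzero. Hence \(\mm/(\mm^2+\langle\ell\rangle)\) is the quotient of a \(2\)-dimensional space by a line, so it has \(\kappa\)-dimension exactly \(1\).

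\textbf{Krull dimension.} A regular local ring is a domain, so \(\ell\neq 0\) is a nonzerodivisor. It is also a nonunit because \(\ell\in\mm\). By Krull's principal ideal theorem, quotienting a Noetherian local ring by a nonzerodivisor in the maximal ideal drops the dimension by exactly one, so
\[
\dim R/\langle \ell\rangle = \dim R - 1 = 1.
\]

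\textbf{Conclusion.} The quotient \(R/\langle \ell\rangle\) is a Noetherian local ring of dimension \(1\) whose maximal ideal is generated by one element. This is the definition of a regular local ring of dimension \(1\), i.e.\ a DVR.

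The only step needing care is the dimension count, specifically that it drops by exactly one rather than at least one. This is exactly what the principal ideal theorem, applied to a nonzerodivisor, provides. Alternatively, since the embedding dimension \(1\) is at least the Krull dimension, it suffices to show \(\dim R/\langle\ell\rangle \geq 1\), which again follows from the principal ideal theorem.
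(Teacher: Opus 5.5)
Your argument is correct and is essentially the paper's proof. The paper uses \(\ell\notin\mm^2\) to write \(\mm=\langle \ell, z\rangle\), which is the content of your cotangent-space count, and then cites Stacks Lemma 10.106.3 on quotients by part of a regular system of parameters; that lemma rests on the same dimension count you give, so you have in effect written out the cited result as a self-contained proof.
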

\begin{proof}
    Since \(\ell\in \mm\smallsetminus \mm^2\), there is some element \(z\in \mm\) such that \(\mm = \langle \ell, z\rangle\). Thus, the lemma follows from~\cite{Stacks}*{Lemma \href{https://stacks.math.columbia.edu/tag/00NQ}{10.106.3}}.
\end{proof}

\begin{proof}[Proof of Proposition~\ref{prop:LocalReconstruction}]
    Let \(B\) be the \(\OO\)-algebra such that \(\calC \cap \A^N_{\OO} = \Spec B\).  We may choose homogeneous coordinates \(z_0, \dots, z_N\) on \(\PP^N_{\OO}\) so that \(\bar{x}_1, \dots, \bar{x}_r \in D(z_0) \subset \PP^N_{\F}\) and \(\bar{\ell} = z_N\). This in particular allows us to treat \(z_1, \dots z_N\) as well-defined elements of \(B\).
    
    Let \(\frakm_1, \dots, \frakm_r \subset B\) be the maximal ideals corresponding to \(\bar{x}_1, \dots, \bar{x}_r\).  We have \((\pi, z_N) \subset \frakm_i\) for all \(i\).
    Since \(\calC \to \Spec \OO\) is smooth and \(\Spec \OO\) is regular, \(B_{\frakm_i}\) is a regular local ring of dimension \(2\)~\cite{Stacks}*{Lemma \href{https://stacks.math.columbia.edu/tag/07NF}{10.163.10}}.  
    In addition, since \(\calC_\F\) is a smooth \(\F\)-scheme, we have that \((B/(\pi))_{\bar{m}_i} \simeq B_{\frakm_i}/\pi B_{\frakm_i}\) is a regular local ring of dimension \(1\).  Hence, by \cite{Stacks}*{Lemma \href{https://stacks.math.columbia.edu/tag/00NR}{10.106.4}}, there exists an element \(g_i \in B_{\frakm_i}\) such that \((\pi, g_i)B_{\frakm_i} = \frakm_iB_{\frakm_i}\).  Since \(z_N \in \frakm_i\), we can write \(z_N = \alpha_i \pi + \beta_i g_i\) for some \(\alpha_i, \beta_i \in B_{\frakm_i}\).

    We will show that there exists a linear form \(\ell'=a_0 + a_1 z_1 + \cdots + a_{N-1} z_{N-1}\) (equivalently a choice of \(a_0, \dots, a_{N-1}\in \OO\)) such that for \(\ell = z_N + \pi \ell'\), we have that \((B/\ell)_{\bar{m}_i}\) is a regular local ring of dimension \(1\) (i.e., a discrete valuation ring~\cite{Stacks}*{Lemma \href{https://stacks.math.columbia.edu/tag/00PD}{10.119.7}}) for all \(1 \leq i \leq r\).  By Lemma~\ref{lem:RLR_condition}, this is equivalent to 
    \begin{equation}\label{eq:lin_form}
        \ell = z_N  + \pi(a_0 + a_1 z_1 + \cdots + a_{N-1} z_{N-1}) = \beta_i g_i + \pi(\alpha_i +a_0 + a_1 z_1 + \cdots + a_{N-1} z_{N-1})  \notin \frakm_i^2 B_{\frakm_i},
    \end{equation}
    for all \(i\). (Note that since \(\ell\) is a combination of \(z_N\) and \(\pi\), it is already in \(\frakm_iB_{\frakm_i}\) for each \(i\).)
    Since \(\pi, g_i\) form a basis for \((\frakm_iB_{\frakm_i}) / (\frakm_i^2B_{\frakm_i})\) as an \({\kk(\bar{x}_i)}\)-vector space, \(\ell\in \frakm_i^2B_{\frakm_i}\) if and only if \(\beta_i \in \frakm_iB_{\frakm_i}\) and 
        \begin{equation}\label{eq:hyperplane}
        (\alpha_i\bmod \frakm_i) +a_0 + a_1 (z_1\bmod \frakm_i) + \cdots + a_{N-1} (z_{N-1}\bmod \frakm_i) \in \frakm_i B_{\frakm_i}.
        \end{equation}
    Equation~\eqref{eq:hyperplane} only depends on the reduction of \(a_0, \dots, a_{N-1}\) in \(\OO/(\pi) = \F\), so the images of solutions \(a_0, \dots, a_{N-1}\) are contained in a translate of a hyperplane in \(\F^N\).
    Since \(\#\{i \leq r : \beta_i \in \frakm_i B_{\frakm_i}\} \leq r < \#\F\), the union of \(r\) translates of hyperplanes cannot cover \(\F^N\). Hence, there exists a choice of \(a_0, \dots, a_{N-1} \in \OO\) whose reduction modulo \(\pi\) does not satisfy \eqref{eq:hyperplane} for all \(i\) for which \(\beta_i \in \frakm_i B_{\frakm_i}\). Indeed, since the condition is on the reduction modulo \(\pi\), there is an open neighborhood \(U\subset \OO^N\) consisting of such \(a_0, \dots, a_{N-1}\). Setting 
    \(\ell  = \ell_{\mathbf{a}} = z_N + \pi(a_0 + a_1 z_1 + \cdots + a_{N-1}z_{N-1})\), we have shown that \(B_{\frakm_i}/(\ell B_{\frakm_i})\) is a discrete valuation ring for all \(\mathbf{a}\in U\).

    To simplify notation, let \(A \colonequals B/(\ell)\).
    Write \(\bar{\frakm}_i\) for the image of \(\frakm_i\) in \(A\).
    By \cite{Stacks}*{Proposition \href{https://stacks.math.columbia.edu/tag/00CT}{10.9.14}}, \(A_{\bar{\frakm}_i} \simeq B_{\frakm_i}/(\ell B_{\frakm_i})\), and so \(A_{\bar{\frakm}_i}\) is a discrete valuation ring with maximal ideal \(\bar{\frakm}_i\). In particular, \(A_{\bar{\frakm}_i}\) has a unique nonzero prime ideal (i.e., \(\bar{\frakm}_i\)), so we can refer to \emph{the} residue field of \(A_{\bar{\frakm}_i}\), and every nonzero ideal is a power of \(\bar{\frakm}_i\).
    
Since \(\ell \in \frakm_i\) we have  \(\kk(\bar{{x}}_i)=B/\frakm_i \simeq A/\bar{\frakm}_i\simeq A_{\bar{\frakm}_i}/\bar{\frakm}_iA_{\bar{\frakm}_i}\).  Thus the residue field of \(A_{\bar{\frakm}_i}\) is \(\kk(\bar{{x}}_i)\).  To prove that \(\pi A_{\bar{\frakm}_i} = \bar{\frakm}_i^{e_i} A_{\bar{\frakm}_i}\), it suffices to show that \(e_i\) is the smallest power of the image of \(\bar{\frakm}_i\) in \(A_{\bar{\frakm}_i}/(\pi A_{\bar{\frakm}_i})\) that is \(0\).
Repeatedly using the fact that localization respects quotients \cite{Stacks}*{Proposition \href{https://stacks.math.columbia.edu/tag/00CT}{10.9.14}} and also that \(\ell \equiv z_N \pmod{\pi}\), we have
\[A_{\bar{\frakm}_i}/(\pi A_{\bar{\frakm}_i}) \simeq (A/\pi A)_{\bar{\frakm}_i} \simeq B_{\frakm_i}/(\pi, \ell)B_{\frakm_i} = B_{\frakm_i}/(\pi, z_N)B_{\frakm_i}.\]
By assumption, \(\Spec(B_{\frakm_i}/(\pi, z_N)B_{\frakm_i})\) is nonreduced of order \(e_i\), and so \(\bar{\frakm}_i^{e_i} =0\) but \(\bar{\frakm}_i^{e_i - 1} \neq 0\).  All together, we have shown that the \(\OO\)-algebra \(A_{\bar{\frakm}_i}\) is a discrete valuation ring with residue field \(\kk(\bar{x}_i)\) and for which the valuation of \(\pi\) is exactly \(e_i\).

Finally, let \(\frakp_i \subset A\) be the (unique) height \(0\) prime ideal corresponding to \((0) \subset A_{\bar{\frakm}_i}\).  Then, since \(\frakp_i A_{\frakp_i} = 0\), 
\(A_{\frakp_i}\) is the fraction field of \(A_{\bar{\frakm}_i}\).
The generic fiber of the subscheme of \(\calC \cap V(\ell)\) corresponding to the prime \(\frakp_i\) is therefore a point \(x_i \in C_K \cap V(\ell)\) with 
\(\kk(x_i) =   A_{\frakp_i} = \Frac(A_{\bar{\frakm}_i})\).  We conclude that \(\kk(x_i)/K\) has ramification degree \(e_i\) and residual degree \(f_i\).
\end{proof}

%%%%%%%%%%%%%%%%%%%%%%%%%%%%%%%%%%%%%%%%%%%%%%%%%%%%%%%%%%%%%%%%%%%%%%%%%%%%%%%%%%%%%%%%%%%%%%%%%%%%%%%%%%
\subsection{Proof of Theorem~\ref{thm:Reconstruction}}\label{sec:ReconstructionThm}
%%%%%%%%%%%%%%%%%%%%%%%%%%%%%%%%%%%%%%%%%%%%%%%%%%%%%%%%%%%%%%%%%%%%%%%%%%%%%%%%%%%%%%%%%%%%%%%%%%%%%%%%%%

Under the assumptions, we have an embedding \(\calC \hookrightarrow \PP^{n}_{\OO}\) for \(n = h^0(C, D)-1\), and on the special fiber, the linear equivalence in~\eqref{eq:sum_eipi} guarantees that there exists a linear form in \(H^0(\PP^n_{\F}, \OO_{\PP^n_{\F}}(1))\) vanishing exactly on the divisor \(e_1 \overline{x}_1 + \cdots + e_r\overline{x}_r\). Since \(r < \#\F_v\), the points \(\overline{x}_1, \dots, \overline{x}_r\) are contained in some affine \(n\)-space within \(\PP^n\).
Thus, Proposition~\ref{prop:LocalReconstruction} yields an open set \(U\subset \HH^0\left(\PP^N_{\OO}, \OO_{\PP^N}(1)\right)\) of linear forms \(\ell\) such that \(V(\ell)\cap C\subset \PP^N_{k_v}\) consists of \(r\) closed points whose residue fields have ramification degree \(e_i\) and residual degree \(f_i\). Since \(k\) is dense in \( k_v\), there exists a \(k\)-rational linear form \(\ell\in U\). 
By projecting away from a suitable linear subspace of \(V(\ell)\), we obtain a \(k\)-rational map \(\pi\colon C \to \PP^1\) with \(C\cap V(\ell) = \pi^{-1}(\tilde{t})\) for some \(\tilde{t}\in \PP^1(k)\)~\cite{VirayVogt}*{Lemma 3.1.2}. 
Since \(\ell\in U\), \(\kk(\pi^{-1}{\tilde{t}})/k\) has splitting profile \(\spp\) at \(v\). Then by an extension of Hilbert irreducibility (see, e.g.,~\cite{BCLV}*{Lemma 5.2}), there exists a point \(t\in \PP^1(k)\) such that \(x:= \pi^{-1}(t)\) is integral (i.e., a point) and has the desired \(v\)-adic splitting behavior.\qed

\subsection{Examples of likely splitting profiles}\label{sec:LikelyExamples}

In this section, we show that likely splitting profiles in complete linear systems tend to propagate under taking multiples and repetitions.\footnote{If \(\spp^0\) is a splitting profile, then its multiple (by \(d\)), \(d\spp^0\) is \(\{(de,f) : (e,f)\in \spp^0\}\) and its (\(d\)-fold) repetition is \(\cup_{i=1}^d \spp^0\).}  For example, if an unramified splitting profile \(\spp^0\) is likely in a linear system \(|D|\), then sufficiently large multiples \(d\spp^0 = \{{(d,f) : f\in \bff(\spp^0)}\}\) are likely in \(|dD|\) (Corollary~\ref{cor:MultipleOfRealizedUnramIsRealized}). This propagation behavior allows us to show that the hypotheses of Theorem~\ref{thm:IntroTotallyRamified} are sharp in a strong sense and are key to the proof of Proposition~\ref{prop:IntroOnEAllProfilesLikely} (Section~\ref{sec:ProofOfIntroOnE}).

\subsubsection{Multiples of realized splitting profiles}\label{sec:MultiplesOfLikelyAreLikely}

\begin{lemma}\label{lem:ScalarMultOfRealizedIsRealized}
    Let \(C\) be a nice curve over a number field \(k\), let \(x\in C\), and let \(d\) be a positive integer.  Let \(v\) be a nonarchimedean place of \(k\) where \(C\) has good reduction  such that \(r(\spp_v(\kk(x)/k)) < \#\F_v\) and \(\Spec \OO_{\kk(x)}\to \calC\) is injective on geometric points over \(v\). If \(h^0(C, dx) = h^0(C_{\F_v}, (dx)_{\F_v})\) and \(|(dx)_{\F_v}|\) is very ample, then there exists a closed point \(y\in C\), linearly equivalent to \(dx\) such that
    \[
    \spp_v(\kk(y)/k) = d\spp_v(\kk(x)/k).
    \] 
\end{lemma}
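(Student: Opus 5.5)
The plan is to apply Theorem~\ref{thm:Reconstruction} to the divisor \(dx\) and the splitting profile \(d\spp_v(\kk(x)/k)\). Write \(\spp^0 \colonequals \spp_v(\kk(x)/k) = \{(e_i,f_i)\}_{i=1}^r\), so that the target profile is \(\spp \colonequals d\spp^0 = \{(de_i, f_i)\}_{i=1}^r\). Then \(r(\spp) = r(\spp^0) < \#\F_v\) and \(\deg(\spp) = d\deg(\spp^0) = d\deg(x) = \deg(dx)\). The hypotheses \(h^0(C, dx) = h^0(C_{\F_v}, (dx)_{\F_v})\) and very ampleness of \(|(dx)_{\F_v}|\) are exactly the remaining cohomological hypotheses of Theorem~\ref{thm:Reconstruction} with \(D = dx\). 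So the only thing left to produce is a collection of distinct points \(\bar{x}_1,\dots,\bar{x}_r\in C_{\F_v}\) with \(\deg(\bar{x}_i) = f_i\) and \(\sum_i de_i\bar{x}_i \sim (dx)_{\F_v}\).

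These points come from Lemma~\ref{lem:SplProfileImpliesFvPoints}, or more precisely from its proof, applied to the point \(x\) itself. Since \(\Spec\OO_{\kk(x)}\to\calC\) is injective on geometric points over \(v\), the last part of that lemma gives distinct closed points \(\bar{x}_i\) with \(\deg(\bar{x}_i) = f_i\). The proof shows more than the linear equivalence statement: the reduction \(x_{\F_v}\) of the divisor \(x\) (the fiber over \(v\) of the closure of \(x\) in \(\calC\)) equals \(\sum_i e_i\bar{x}_i\) as divisors. Multiplying by \(d\) gives \((dx)_{\F_v} = \sum_i de_i\bar{x}_i\), an equality of divisors and hence in particular a linear equivalence. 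Here one uses that reduction of divisors (closure and restriction to the special fiber) is additive, so that \((dx)_{\F_v} = d\,x_{\F_v}\). This is compatible with the paper's notation \(D_{\F_v}\) for the limit of the class, since specialization \(\Pic_C(k)\to\Pic_{C_{\F_v}}(\F_v)\) is a homomorphism.

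Theorem~\ref{thm:Reconstruction} then produces a degree \(d\deg(x)\) point \(y\in C\) with \(y\sim dx\) and \(\spp_v(\kk(y)/k) = \spp = d\spp^0\), as desired. I expect no real obstacle; the one point needing care is checking that the identification \(x_{\F_v} = \sum e_i\bar{x}_i\) holds on the nose. This is where the injectivity hypothesis is used: it ensures each geometric point over \(v\) of \(\Spec\OO_{\kk(x)}\) maps to a distinct point, so there are no coefficients \(f_i/\deg(\bar{x}_i)\) and no collisions, and the points \(\bar{x}_i\) are distinct of the correct degrees as Theorem~\ref{thm:Reconstruction} requires.
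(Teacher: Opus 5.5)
Your proposal is correct and matches the paper's proof. Both arguments use Lemma~\ref{lem:SplProfileImpliesFvPoints}, with the injectivity hypothesis, to obtain distinct points \(\bar{x}_i\) of degree \(f_i\) satisfying \(\sum_i e_i\bar{x}_i\sim x_{\F_v}\), multiply by \(d\), and apply Theorem~\ref{thm:Reconstruction} to \(dx\) and the profile \(d\spp_v(\kk(x)/k)\); your extra remarks on equality of divisors and additivity of specialization are a harmless refinement of the same step.
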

\begin{proof}
    By Lemma~\ref{lem:SplProfileImpliesFvPoints}, there are distinct points \(\overline{x}_1, \dots, \overline{x}_{r}\in C_{\F_v}\) such that
    \[
        \deg(\overline{x}_i) = f_i \;\forall i, \quad\textup{and}\quad\sum_i e_i \overline{x}_i\sim x_{\F_v}.
    \]
    After multiplying both sides by \(d\), we can apply Theorem~\ref{thm:Reconstruction} to obtain a closed point \(y\in C\) such that \(\kk(y)\) has splitting profile \(d\spp_v(\kk(x)/k)\) at \(v\).    
\end{proof}

For unramified splitting profiles and totally ramified splitting profiles, we are often able to verify the hypotheses of Lemma~\ref{lem:ScalarMultOfRealizedIsRealized} for an infinite set of places (Corollaries~\ref{cor:MultipleOfRealizedUnramIsRealized} and~\ref{cor:MultipleOfRealizedTotalRamIsRealized}, respectively). From this lemma, we are also able to conclude that the hypotheses of Theorem~\ref{thm:IntroTotallyRamified} are sharp (Corollary~\ref{cor:Thm1.1Sharp}).

\begin{cor}\label{cor:MultipleOfRealizedUnramIsRealized}
    Let \(C\) be a nice curve over a number field \(k\), let \(x \in C\) be a closed point and let \(w \in \Omega_k\) be a place unramified in \(\kk(x)\).
    % If \(w \in \Omega_k\) is unramified in \(\kk(x)\), 
    Then, for all \(d \geq (2g(C) +1)/\deg(x)\), there exists a finite set of places \(S^0_d \subset \Omega_k\) such that
    \[
    \{v\notin S^0_d : \spp_v(\kk(x)/k) = \spp_w(\kk(x)/k)\} \subset \scrS_d \colonequals \{v : \exists y \in |dx| \textup{ integral}, \; \spp_v(\kk(y)/k) = d \spp_w(\kk(x)/k) \}.
    \]
    In particular, \(\scrS_d\) has positive density and the splitting profile \(d\spp_w(\kk(x)/k)\) is likely in \(|dx|\).
\end{cor}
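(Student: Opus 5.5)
The plan is to apply Lemma~\ref{lem:ScalarMultOfRealizedIsRealized} to the point \(x\) at every place \(v\) outside a finite exceptional set \(S^0_d\). We then use the Chebotarev density theorem to see that the places with \(\spp_v(\kk(x)/k) = \spp_w(\kk(x)/k)\) have positive density.

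First I fix a smooth proper model \(\calC \to \Spec \OO_{k,S}\) and take \(x\) to be the closed point spreading out to \(\Spec \OO_{\kk(x),S'} \to \calC\). I will take \(S^0_d\) to be the union of the following finite sets:
\begin{itemize}
\item the places in \(S\), i.e.\ the places of bad reduction;
\item the finitely many places where \(\Spec \OO_{\kk(x)} \to \calC\) fails to be injective on geometric points over \(v\), as in Lemma~\ref{lem:SplProfileImpliesFvPoints};
\item the finitely many places with \(\#\F_v \leq \deg(x)\), which forces \(r(\spp_v(\kk(x)/k)) \leq \deg(x) < \#\F_v\);
\item the finitely many places where the cohomology or very ampleness conditions fail for \(dx\).
\end{itemize}

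The main step is checking the hypotheses \(h^0(C,dx) = h^0(C_{\F_v},(dx)_{\F_v})\) and very ampleness of \(|(dx)_{\F_v}|\) for all but finitely many \(v\). This is where \(d\deg(x) \geq 2g(C)+1\) enters. The divisor \((dx)_{\F_v}\) on the smooth genus \(g\) curve \(C_{\F_v}\) has degree \(d\deg(x) \geq 2g+1\), so it is very ample. Riemann--Roch gives \(h^0 = d\deg(x) + 1 - g\) on both fibers, since \(\deg > 2g-2\) kills \(h^1\).

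Good reduction alone, outside \(S\), suffices here; no further places need to be discarded. One should still check that the limit divisor \((dx)_{\F_v}\) has the same degree, which holds because the closure of \(x\) in \(\calC\) is finite flat of degree \(\deg(x)\) over \(\OO_{k,S}\).

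With these hypotheses in place, Lemma~\ref{lem:ScalarMultOfRealizedIsRealized} gives, for every \(v \notin S^0_d\), a closed point \(y \sim dx\) with \(\spp_v(\kk(y)/k) = d\spp_v(\kk(x)/k)\). When \(\spp_v(\kk(x)/k) = \spp_w(\kk(x)/k)\), this places \(v\) in \(\scrS_d\), proving the containment.

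For the final assertion, I use that \(w\) is unramified in \(\kk(x)\). The splitting profile at places unramified in \(\kk(x)\) is determined by the Frobenius conjugacy class in the Galois closure acting on the cosets. By the Chebotarev density theorem, the set of \(v\) with \(\spp_v = \spp_w\) has positive density, since it contains the places whose Frobenius is conjugate to that of \(w\). Removing the finite set \(S^0_d\) keeps the density positive, so \(\scrS_d\) is infinite and \(d\spp_w(\kk(x)/k)\) is likely in \(|dx|\).

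The only genuinely delicate point is the uniform applicability of Lemma~\ref{lem:ScalarMultOfRealizedIsRealized}, namely the bound \(r < \#\F_v\) and the injectivity condition. Both exclude only finitely many places, so I expect no real obstacle.
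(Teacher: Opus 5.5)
Your proposal is correct and follows essentially the same route as the paper. Like the paper, you discard the places of bad reduction, the finitely many places where \(\Spec\OO_{\kk(x)}\to\calC\) fails to be injective on geometric points, and the places with small residue field; you then apply Lemma~\ref{lem:ScalarMultOfRealizedIsRealized}, using Riemann--Roch for the \(h^0\) and very-ampleness hypotheses and Chebotarev for positive density. The only cosmetic difference is that you exclude places with \(\#\F_v\le\deg(x)\) rather than \(\#\F_v\le r(\spp_w(\kk(x)/k))\), which is equally valid since \(r(\spp_v(\kk(x)/k))\le\deg(x)\).
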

\begin{proof}
    The last statement follows from the set containment since the Chebotarev density theorem says that the set \(\scrS \colonequals \{v : \spp_v(\kk(x)/k) = \spp_w(\kk(x)/k)\}\) has positive density (and, in particular, is infinite).  
    
    Let \(S\) be the (finite) set of places of bad reduction of \(C\), so that there is a smooth proper model \(\calC \to \Spec \OO_{k,S}\).
    Since \(\Spec \OO_{\kk(x), S}\to \calC\) is generically injective on geometric points, it is injective on geometric points above \(v\) for all places \(v\) away from a finite set \(S'\).  Let \(S''\) be the finite set of places \(v\) with \(\#\F_v \leq r(\spp_w(\kk(x)/k))\).  Thus, the hypotheses of  Lemma~\ref{lem:ScalarMultOfRealizedIsRealized} are satisfied for all \(v \in \scrS \smallsetminus (S \cup S' \cup S'')\) and \(d \geq (2g(C) +1)/\deg(x)\) by the Riemann--Roch theorem, and so Lemma~\ref{lem:ScalarMultOfRealizedIsRealized} guarantees the existence of a point \(y \in |dx|\) such that \(\spp_v(\kk(y)/k) = d\spp_v(\kk(x)/k) = d \spp_w(\kk(x)/k)\).  Hence, we have the desired containment with \(S^0_d := S\cup S'\cup S''\).
\end{proof}

\begin{cor}\label{cor:MultipleOfRealizedTotalRamIsRealized}
    Let \(C\) be a nice curve over a number field \(k\), let \(S\) be the finite set of bad primes of \(C\), let \(D\) be a divisor of positive degree, and let \(d \geq (2g(C) + 1)/\deg(D)\). Then
    \begin{equation}\label{eq:inclusion}
        \left\{v \not\in S : {\exists x \in |D| \textup{ integral}, \atop \spp_v(\kk(x)/k) = \{(\deg(D), 1)\}}\right\} \subset \left\{v : {\exists y \in |dD| \textup{ integral}, \atop \spp_v(\kk(y)/k) = \{(d\deg(D), 1)\}}\right\}
    \end{equation}
    In particular, if total ramification is likely in \(|D|\), then total ramification is likely in \(|dD|\).
\end{cor}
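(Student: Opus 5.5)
The plan is to deduce this from Lemma~\ref{lem:ScalarMultOfRealizedIsRealized}, or more directly from Theorem~\ref{thm:Reconstruction}. The key simplification is that total ramification has \(r(\spp) = 1\). The hypothesis \(r(\spp) < \#\F_v\) is therefore automatic, and the injectivity-on-geometric-points condition is nearly automatic.

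Fix \(v \notin S\) in the left-hand side, so that there is an integral \(x \in |D|\) with \(\spp_v(\kk(x)/k) = \{(\deg(D), 1)\}\). Apply Lemma~\ref{lem:SplProfileImpliesFvPoints} with \(H = 0\), using the divisor \(D\) itself; note that \([x] = [D]\) lies in \(\langle [D]\rangle\). Since \(r = 1\) and \(f_1 = 1\), we get a single point \(\bar{x}_1 \in C_{\F_v}\) of degree dividing \(1\), hence an \(\F_v\)-point, with \(\deg(D)\,\bar{x}_1 \sim D_{\F_v}\). No injectivity hypothesis is needed here: the only worry in the lemma was that \(\deg(\bar{x}_i)\) might be a proper divisor of \(f_i\), or that distinct \(\bar{x}_i\) might collide, and neither can occur when \(r = f_1 = 1\). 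This is the one point where the argument improves on Lemma~\ref{lem:ScalarMultOfRealizedIsRealized}, and it is why no finite exceptional set beyond \(S\) appears.

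Multiply the relation by \(d\) to obtain \(d\deg(D)\,\bar{x}_1 \sim (dD)_{\F_v}\). Now check the hypotheses of Theorem~\ref{thm:Reconstruction} for the divisor \(dD\) and the profile \(\{(d\deg(D),1)\}\).
\begin{itemize}
\item We have \(\deg(dD) = d\deg(D) \geq 2g(C)+1\), and the genus is preserved under good reduction. So Riemann--Roch gives \(h^0(C, dD) = d\deg(D) + 1 - g = h^0(C_{\F_v}, (dD)_{\F_v})\).
\item The linear system \(|(dD)_{\F_v}|\) is very ample, since its degree is at least \(2g+1\).
\item The condition \(r(\spp) = 1 < \#\F_v\) holds trivially.
\end{itemize}
Theorem~\ref{thm:Reconstruction} then produces a closed point \(y \sim dD\) of degree \(d\deg(D)\) that is totally ramified at \(v\). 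This proves the inclusion~\eqref{eq:inclusion}.

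For the final claim, suppose total ramification is likely in \(|D|\). Then the set of \(v\) realizing it is infinite, so it remains infinite after removing the finite set \(S\). By~\eqref{eq:inclusion}, infinitely many places realize total ramification in \(|dD|\), so it is likely there. Note that this implies the multiplicity \(d\) is used only through the degree bound \(d\deg(D)\geq 2g+1\).

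The main obstacle I anticipate is one technical detail inside the reduction step: the appeal to Lemma~\ref{lem:SplProfileImpliesFvPoints} and Theorem~\ref{thm:Reconstruction} requires \(D_{\F_v}\) to be the specialization of the class of \(D\). This holds by the conventions in the Notation subsection, so I expect it to be only a bookkeeping check rather than a genuine difficulty.
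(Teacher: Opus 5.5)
Your argument is correct and is essentially the paper's proof. The paper simply invokes Lemma~\ref{lem:ScalarMultOfRealizedIsRealized}, whose proof consists of exactly the two steps you wrote out: Lemma~\ref{lem:SplProfileImpliesFvPoints} followed by Theorem~\ref{thm:Reconstruction}. The paper also makes your observation that injectivity on geometric points is automatic when \(r = f_1 = 1\) (it notes that \(\Spec \OO_{\kk(x)}\) has only one geometric point over \(v\)), so this is not an improvement over the paper's route; it is the same reason no exceptional set beyond \(S\) appears there.
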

\begin{proof}
      Let \(v\) be a place in the left hand side set of \eqref{eq:inclusion}.  Then there exists a point \(x \in C\) such that \(\spp_v(\kk(x)/k) = \{(\deg(D), 1)\}\).  Since \(r(\{(\deg(D), 1)\}) = 1\) and \(\Spec \OO_{\kk(x)}\) only has one geometric point over \(v\), the hypotheses of Lemma~\ref{lem:ScalarMultOfRealizedIsRealized} are satisfied for all \(d \geq (2g(C) + 1)/\deg(D)\) by the Riemann--Roch theorem.  Hence \(v\) is in the right hand side set of \eqref{eq:inclusion}.
\end{proof}

\begin{cor}\label{cor:Thm1.1Sharp}
    Let \(C\) be a nice curve defined over a number field \(k\).
    Suppose that there is a geometric point \(x \in C(\bar{k})\) such that \(\indx(C)x\) is linearly equivalent to a \(k\)-rational divisor \(D\).  Then for all \(d \geq (2g(C) +1)/\indx(C)\), the set
    \[\scrS_d \colonequals \{v : \exists y \in |dD| \textup{ integral}, \; \spp_v(\kk(y)/k) = \{(d \indx(C), 1)\}\}\]
    has positive density.
    In particular, total ramification is likely in \(|dD|\).
\end{cor}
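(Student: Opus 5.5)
The plan is to realize total ramification directly at a positive density set of places and then apply Theorem~\ref{thm:Reconstruction} to the linear system \(|dD|\). Write \(w \colonequals \indx(C)\). The geometric point \(x\) is defined over some finite Galois extension \(L/k\). By the Chebotarev density theorem, the set of places \(v\) of \(k\) that split completely in \(L\) has positive density. For such \(v\) (away from a finite set), the reduction \(\bar{x}\) of \(x\) modulo a place of \(L\) above \(v\) is an \(\F_v\)-rational point of \(C_{\F_v}\). The linear equivalence \(wx \sim D\) holds over \(L\), and it is preserved under reduction, because reduction \(\Pic_C(L)\to \Pic_{C}(\F_v)\) is a homomorphism compatible with the limits \(D_{\F_v}\). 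Hence \(w\bar{x} \sim D_{\F_v}\), and therefore \(dw\,\bar{x} \sim (dD)_{\F_v}\).

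Next, I verify the hypotheses of Theorem~\ref{thm:Reconstruction} for the divisor \(dD\), which has degree \(dw \geq 2g(C)+1\), and the profile \(\spp = \{(dw,1)\}\). Here \(r(\spp) = 1 < \#\F_v\) holds trivially. Since \(dw \geq 2g+1\), Riemann--Roch on both \(C\) and \(C_{\F_v}\) gives \(h^0(C, dD) = dw - g + 1 = h^0(C_{\F_v}, (dD)_{\F_v})\), and it also shows that \(|(dD)_{\F_v}|\) is very ample. Taking the single point \(\bar{x}_1 = \bar{x}\) of degree \(1\), which satisfies \(dw\,\bar{x}_1 \sim (dD)_{\F_v}\), Theorem~\ref{thm:Reconstruction} produces a closed point \(y \in C\) of degree \(dw\) with \(y \sim dD\) and \(\spp_v(\kk(y)/k) = \{(dw,1)\}\). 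So \(v \in \scrS_d\).

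It follows that \(\scrS_d\) contains all places that split completely in \(L\), outside a finite set (the places of bad reduction together with the finitely many where the reduction argument fails). This set has positive density, so \(\scrS_d\) does as well, and total ramification is likely in \(|dD|\).

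The only delicate point is the compatibility of the linear equivalence with reduction. I handle it by viewing \([x]\) as an \(L\)-point of \(\Pic^1_{\calC}\) over \(\OO_{L,S}\), which exists by properness. Then \(w[x] - [D]\) is a section of \(\Pic^0\) that vanishes generically, hence vanishes identically by separatedness. For completely split \(v\), this yields an \(\F_v\)-point \(\bar{x}\) with \(w[\bar{x}] = [D_{\F_v}]\).
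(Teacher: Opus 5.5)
Your proposal is correct and follows essentially the same route as the paper. You reduce \(x\) at places where it becomes \(\F_v\)-rational, carry \(\indx(C)x\sim D\) down to the special fiber, check the Riemann--Roch hypotheses, and apply Theorem~\ref{thm:Reconstruction} with \(r=1\). The only differences are minor: you use places split completely in the Galois closure rather than the paper's places having a degree-one place above them in \(\kk(x)\) (both have positive density by Chebotarev), and your separatedness argument makes explicit the compatibility of linear equivalence with reduction, which the paper leaves implicit.
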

\begin{proof}
    Let \(L\) denote the field of definition of the geometric point \(x\) (i.e., the residue field of the corresponding closed point).
    Let \(v\) be a place of \(k\) that has a degree \(1\) place above it in \(L\) and at which \(C\) has good reduction.  Then there is a point \(\bar{x} \in C(\F_v)\) such that \(\indx(C) \bar{x} \in |D_{\F_v}|\).  Hence \(d \indx(C) \bar{x} \sim d D_{\F_v}\).  Further, \(h^0(C, dD) = h^0(C_{\F_v}, dD_{\F_v})\) and \(|{d}D_{\F_v}|\) is very ample by the Riemann--Roch theorem.  Therefore \(v \in \scrS_d\) by Theorem~\ref{thm:Reconstruction}.  Since the set of degree \(1\) places in \(L\) has positive density by the Chebotarev density theorem, we conclude that \(\scrS_d\) has positive density.
\end{proof}

\subsubsection{Unions of realized splitting profiles}

\begin{lemma}\label{lem:UnionOfRealizedIsRealized}
    Let \(D^{(1)}, D^{(2)}\) be divisors on \(C\) such that their sum is basepoint free and let \(\spp^{(1)}, \spp^{(2)}\) be splitting profiles with \(\deg(D^{(i)}) = \deg(\spp^{(i)})\). Fix a nonarchimedean place \(v\). If there exist points \(x^{(i)}\in |D^{(i)}| \subset C\) with \(\spp_v(\kk(x^{(i)})/k) = \spp^{(i)}\) and \(x^{(1)}\neq x^{(2)}\), then there exists a point \(y\in |D^{(1)}+ D^{(2)}|\) with splitting profile \(\spp^{(1)}\cup \spp^{(2)}\) at \(v\).
\end{lemma}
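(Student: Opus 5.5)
The idea is to realize the sum divisor \(x^{(1)}+x^{(2)}\in|D^{(1)}+D^{(2)}|\) as a fiber of a morphism \(C\to\PP^1\) over \(k\). We then move to a nearby \(k\)-rational fiber that is integral and has the same \(v\)-adic splitting behavior, using continuity of roots together with the Hilbert irreducibility extension~\cite{BCLV}*{Lemma 5.2} already used in the proof of Theorem~\ref{thm:Reconstruction}.

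First, set \(E\colonequals x^{(1)}+x^{(2)}\), an effective \(k\)-rational divisor of degree \(\deg(D^{(1)})+\deg(D^{(2)})\) in \(|D^{(1)}+D^{(2)}|\). Because \(x^{(1)}\neq x^{(2)}\) are distinct closed points, \(E\) is reduced. As a \(k\)-scheme, \(E\simeq\Spec(\kk(x^{(1)})\times\kk(x^{(2)}))\), so \(E\otimes_k k_v\) has splitting profile \(\spp^{(1)}\cup\spp^{(2)}\). Since \(|D^{(1)}+D^{(2)}|\) is basepoint free, I would choose a second \(k\)-rational member \(E'\) of the linear system with \(\Supp E'\cap\Supp E=\emptyset\). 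Such an \(E'\) exists because the members meeting a given point form a hyperplane, and \(k\) is infinite. The pencil spanned by \(E\) and \(E'\) then gives a morphism \(\varphi\colon C\to\PP^1\) over \(k\) with \(\varphi^*(0)=E\) and \(\varphi^*(\infty)=E'\). Thus \(E\) is the fiber over \(t=0\), and this fiber is étale over \(k\) because \(E\) is reduced.

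Next, since \(\varphi^{-1}(0)\) is étale, \(\varphi\) is finite étale over a neighborhood of \(0\) in \(\PP^1\). By Krasner's lemma, or continuity of roots of the separable polynomial cutting out the fiber, there is a \(v\)-adic open neighborhood \(U\ni0\) in \(\PP^1(k_v)\) such that for every \(t\in U\) the étale algebra \(\varphi^{-1}(t)\otimes k_v\) is isomorphic to \(E\otimes_k k_v\). In particular, its splitting profile at \(v\) is \(\spp^{(1)}\cup\spp^{(2)}\).

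Finally, I would apply the extension of Hilbert irreducibility~\cite{BCLV}*{Lemma 5.2}: there exists \(t\in\PP^1(k)\cap U\) whose fiber \(y\colonequals\varphi^{-1}(t)\) is integral. This \(y\) is a closed point, linearly equivalent to \(E\), hence lies in \(|D^{(1)}+D^{(2)}|\), and by the previous step it has splitting profile \(\spp^{(1)}\cup\spp^{(2)}\) at \(v\).

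The main obstacle I expect is the compatibility between Hilbert irreducibility and the local condition at \(v\), that is, that the Hilbert set meets the \(v\)-adic open set \(U\). This is exactly what~\cite{BCLV}*{Lemma 5.2} supplies. The one input it requires is that \(\varphi\) is geometrically integral in the right sense, i.e.\ that \(C\) is geometrically integral, which holds because \(C\) is nice. The reducedness of \(E\), which comes from \(x^{(1)}\neq x^{(2)}\), is what makes the local constancy in the second step valid.
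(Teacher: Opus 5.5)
Your proposal is correct and follows essentially the same route as the paper. You realize \(x^{(1)}+x^{(2)}\) as the reduced (hence \'etale) fiber over \(0\) of a morphism \(C\to\PP^1\) coming from the basepoint-free system, then use weak approximation combined with Hilbert irreducibility (\cite{BCLV}*{Lemma 5.2}) to find an integral \(k\)-rational fiber with the same \(v\)-adic \'etale algebra. Your explicit choice of a disjoint second member \(E'\) and the Krasner local-constancy step simply fill in details that the paper leaves implicit.
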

\begin{proof}
    Since \(x^{(1)}\in|D^{(1)}|\) and \(x^{(2)}\in |D^{(2)}|\), the sum \(x^{(1)}+x^{(2)}\) is an effective divisor in \(|D^{(1)} + D^{(2)}|\). Further, since \(|D^{(1)} + D^{(2)}|\) is basepoint free, there exists a morphism \(f\colon C \to \PP^1\) with \(\deg(f) = \deg(D^{(1)}) + \deg(D^{(2)})\) such that \(f^*0 = x^{(1)} + x^{(2)}\).  Since \(x^{(1)}\neq x^{(2)}\), \(f\) is unramified at \(0\) so in particular, the fiber is an \'etale algebra. By weak approximation combined with Hilbert irreducibility (see, e.g.,~\cite{BCLV}*{Lemma 5.2}), we may find a \(t\in \PP^1(k)\) such that \(y:=f^*t\) is integral and has \(\kk(y)\otimes k_v \simeq \kk(x^{(1)})\otimes k_v\times \kk(x^{(2)})\otimes k_v\). In particular, \(y\) has splitting profile \(\spp^{(1)} \cup \spp^{(2)}\) at \(v\).
\end{proof}

Using this, we will be able to show that sufficiently many repetitions of a likely splitting profile is likely (Proposition~\ref{prop:DuplicationsofLikelyAreLikely}).  We begin with the following lemmas.

\begin{lemma}\label{lem:fibersvary}
    Let \(C\) be a nice curve over a number field \(k\) and let \(C \to \PP^1\) be a degree \(d\) morphism.  For any finite collection of degree \(d\) extensions \(k_1, \dots, k_n\) over \(k\), there exist infinitely many places \(v \in \Omega_k\) that split completely in \(k_i\) for all \(1 \leq i \leq n\) and points \(t = t(v) \in \PP^1(k)\) such that \(C_t\) is integral and \(\kk(C_t)_v\) is nonsplit. In particular, \(\kk(C_t) \not\simeq k_i\) for any \(1\leq i \leq n\).
\end{lemma}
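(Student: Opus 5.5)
We need infinitely many places $v$ that split completely in every $k_i$, and for each such $v$ a point $t\in\PP^1(k)$ with integral fiber whose local algebra $\kk(C_t)\otimes_k k_v$ is not split. The plan is to combine the Chebotarev density theorem with weak approximation and Hilbert irreducibility, in the form of~\cite{BCLV}*{Lemma 5.2} as used in the proof of Lemma~\ref{lem:UnionOfRealizedIsRealized}.

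First, let $M$ be the Galois closure over $k$ of the compositum $k_1\cdots k_n$. By Chebotarev, the set of places $v$ splitting completely in $M$ has positive density, hence is infinite. Each such $v$ splits completely in every $k_i$. Discard the finitely many places where the morphism $\phi\colon C\to\PP^1$ (or a chosen model of it) has bad reduction, and those with $\#\F_v$ small.

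Next, fix such a $v$. We look for a $k_v$-point $t_0\in\PP^1(k_v)$ whose fiber $\phi^{-1}(t_0)$ is étale but not a product of $d$ copies of $k_v$. Since $d\ge2$ (if $d=1$ then $C\simeq\PP^1$ and every fiber is $k_v$; I would treat this degenerate case separately or note that it is presumably excluded), there are two ways to do this.
\begin{enumerate}[label=(\roman*)]
\item Work on the reduction $\bar\phi\colon C_{\F_v}\to\PP^1_{\F_v}$. By Lang--Weil/Chebotarev for function fields, for $\#\F_v$ large there is an unramified $\F_v$-point $\bar t$ whose fiber contains a closed point of degree $>1$. One can obtain this from the existence of an unramified fiber with nontrivial Frobenius in the geometric monodromy group, which is a transitive group of degree $d\ge2$ and so contains a derangement by Jordan's lemma. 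Hensel lifting $\bar t$ to $t_0\in\PP^1(\mathcal O_v)$ then gives a fiber containing an unramified extension of degree $>1$.
\item Alternatively, take $\bar t$ a branch point, so the fiber is ramified.
\end{enumerate}
Option (i) is cleaner. In either case, the set of $t\in\PP^1(k_v)$ near $t_0$ with $\kk(\phi^{-1}t)\otimes k_v\simeq \kk(\phi^{-1}t_0)$ is $v$-adically open, by Krasner's lemma.

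Finally, apply Hilbert irreducibility with weak approximation~\cite{BCLV}*{Lemma 5.2}: there is $t\in\PP^1(k)$ in this open set with $C_t$ integral, and then $\kk(C_t)_v$ is nonsplit. Because $v$ splits completely in each $k_i$, we have $k_i\otimes k_v\simeq k_v^d$, so $\kk(C_t)\not\simeq k_i$. Running this over the infinitely many $v$ proves the claim.

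The main obstacle is the middle step, namely guaranteeing a nonsplit unramified fiber over $\F_v$ for all but finitely many of the chosen $v$. I would use the Chebotarev density theorem for the cover $C_{\F_v}\to\PP^1_{\F_v}$ with effective error (Weil bounds), together with the existence of a derangement. One must handle a possible constant field extension in the Galois closure: any such extension is split at $v$ after further restricting $v$ to split in the constant field, and the geometric monodromy group is still transitive, so it still contains a derangement.
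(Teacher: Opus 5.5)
Your proof is correct and follows essentially the same route as the paper. The paper's ``$k_v$-point on a nontrivial \'etale twist of the Galois closure'' is the same object as your Hensel-lifted unramified $\F_v$-fiber with nontrivial Frobenius; the paper then restricts to $v$ split completely in the $k_i$ and passes to a $k$-point with integral fiber via weak approximation and Hilbert irreducibility, a step it leaves implicit and you make explicit. (Jordan's lemma is more than you need: the monodromy action on the $d$ sheets is faithful, so any nontrivial Frobenius already gives a nonsplit fiber. The $d\geq 2$ caveat you flag is implicit in the paper too.)
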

\begin{proof}
    For all sufficiently large \(v\), there exists a \(k_v\) point on a nontrivial  \'etale twist of the Galois closure \(C\to \PP^1\). In other words, for all sufficiently large \(v\), there exists a point \(t = t(v) \in \PP^1(k)\) such that \(\kk(C_t)_v\) is a nontrivial \'etale \(k_v\)-algebra. If we additionally constrain \(v\) so that it splits completely in \(k_i\) for all \(i\), then \(\kk(C_t)\not\simeq k_i\) as desired.
\end{proof}

\begin{lemma}\label{lem:SplittingProfileRealizedByManyPoints}
    Let \(C\) be a nice curve over a number field \(k\), let \(x\in C\) be a closed point, and let \(v\) be a nonarchimedean place of \(k\). If \(|x|\) is basepointfree, then there exist infinitely many closed points \(y\in C\) such that \(y\sim x\) and \(\spp_v(\kk(x)/k) = \spp_v(\kk(y)/k)\).
\end{lemma}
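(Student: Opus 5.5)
Since \(|x|\) is basepoint free, it defines a morphism \(f\colon C\to \PP^N\) over \(k\). Composing with a general linear projection gives a morphism \(f\colon C\to\PP^1\) of degree \(\deg(x)\) with \(f^*0 = x\); for instance, take the pencil spanned by \(x\) and a general divisor of \(|x|\) not containing \(x\) in its support. Every fiber \(f^*t\) for \(t\in\PP^1(k)\) is then a divisor linearly equivalent to \(x\), so it suffices to produce infinitely many \(t\in \PP^1(k)\) with \(f^*t\) integral and \(\kk(f^*t)\otimes_k k_v\simeq \kk(x)\otimes_k k_v\). The latter condition forces \(\spp_v(\kk(f^*t)/k)=\spp_v(\kk(x)/k)\).

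The fiber over \(0\) is the reduced point \(x\), so \(f\) is étale over \(0\). By Krasner's lemma, or continuity of roots via the implicit function theorem over \(k_v\), there is a \(v\)-adic open neighborhood \(U\ni 0\) in \(\PP^1(k_v)\) such that for \(t\in U\), the fiber \(f^{-1}(t)\) is étale and \(\kk(f^{-1}(t))\otimes k_v\simeq \kk(x)\otimes k_v\) as \(k_v\)-algebras. Concretely, the defining polynomial of the fiber varies continuously in \(t\) and has separable specialization at \(t=0\).

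Next I would invoke the weak-approximation version of Hilbert irreducibility already used in Lemma~\ref{lem:UnionOfRealizedIsRealized} (\cite{BCLV}*{Lemma 5.2}). Since \(C\) is geometrically integral, \(f\) is a geometrically irreducible cover, and the Hilbert set of \(t\in\PP^1(k)\) with \(f^*t\) integral meets \(U\cap\PP^1(k)\) in an infinite set. Distinct values of \(t\) give distinct, disjoint fibers, hence infinitely many distinct closed points \(y\).

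The main obstacle is ensuring infinitely many such \(t\), not just one. If the cited lemma only gives one \(t\), I would handle this by the standard fact that Hilbert sets intersected with \(v\)-adically open sets are infinite (indeed dense in \(U\)): alternatively, given finitely many such \(t_1,\dots,t_n\), apply the lemma again to the smaller open set \(U\smallsetminus\{t_1,\dots,t_n\}\), which is still open and nonempty.
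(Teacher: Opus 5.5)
Your proposal is correct and follows essentially the same route as the paper. Both arguments use the pencil \(f\colon C\to\PP^1\) spanned by \(x\) and a divisor of \(|x|\) disjoint from \(x\), so that \(f^*0=x\), and then apply Hilbert irreducibility with weak approximation at \(v\) (\cite{BCLV}*{Lemma 5.2}). The only difference is how you secure infinitely many distinct points: you use that distinct \(t\) give disjoint fibers together with the density of Hilbert sets in \(v\)-adic open sets, whereas the paper forces \(y\neq x\) by imposing a non-isomorphic local algebra at an auxiliary place \(w\) (Lemma~\ref{lem:fibersvary}) and iterates over more places, so your version is slightly more direct.
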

\begin{proof}
    If \(|x|\) is basepointfree then there exists an effective divisor \(D\in |x|\) whose support is disjoint from \(x\). Thus there is a function \(f\colon C \to \PP^1\) of degree \(d\) such that \(f^*0 = x\) and \(f^*\infty = D\). Now choose a place \(w{\neq v} \in \Omega_k\) and point \(t_0\in \PP^1(k)\) such that \(t_0\) is not a branch point of \(f\) and \(\kk(f^*t_0)\otimes k_w\not\simeq \kk(x)\otimes k_w\), which exist by Lemma~\ref{lem:fibersvary}. Then by an extension of Hilbert irreducibility~\cite{BCLV}*{Lemma 5.2} there exists a closed point \(t\in \PP^1(k)\) such that the fiber \(y:= f^*t\) is integral and 
    \[
    \kk(y)\otimes k_v\simeq \kk(x)\otimes k_v \quad \kk(y)\otimes k_{{w}}\not\simeq \kk(x)\otimes k_w.
    \]
    In particular, \(y\sim x\), \(y\neq x\), and \(\spp_v(\kk(x)/k) = \spp_v(\kk(y)/k)\).

    To obtain infinitely many such points, we iteratively add more conditions at more places to ensure that each new point \(y_i\) is different than those previously obtained.
\end{proof}

\begin{prop}\label{prop:DuplicationsofLikelyAreLikely}
    Let \(C\) be a nice curve over a number field \(k\) and let \(D\) be a divisor on \(C\).
    Suppose that \(\spp^{0}\) is likely in \(|D|\) and that \(|D|\) is basepoint free.  Then for all \(d \geq 1\), \(\bigcup_{i=1}^d\spp^{0}\) is likely in \(|dD|\).
\end{prop}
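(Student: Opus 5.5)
The plan is to induct on \(d\), using Lemma~\ref{lem:UnionOfRealizedIsRealized} to build each repetition and Lemma~\ref{lem:SplittingProfileRealizedByManyPoints} to ensure the two points being combined are distinct. The case \(d=1\) is the hypothesis. Since \(\spp^0\) is likely in \(|D|\), there is an infinite set \(\calT\) of places \(v\) such that some integral \(x_v\in|D|\) has \(\spp_v(\kk(x_v)/k)=\spp^0\). I will show that for every \(v\in\calT\) and every \(d\ge1\) there is an integral \(y\in|dD|\) with \(\spp_v(\kk(y)/k)=\bigcup_{i=1}^d\spp^0\). This gives infinitely many places, so \(\bigcup_{i=1}^d\spp^0\) is likely in \(|dD|\).

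Fix \(v\in\calT\) and suppose \(y_{d-1}\in|(d-1)D|\) is integral with splitting profile \(\bigcup_{i=1}^{d-1}\spp^0\) at \(v\). Apply Lemma~\ref{lem:UnionOfRealizedIsRealized} with \(D^{(1)}=(d-1)D\), \(D^{(2)}=D\), \(x^{(1)}=y_{d-1}\) and \(x^{(2)}\) a point of \(|D|\) realizing \(\spp^0\) at \(v\). The sum \(dD\) is basepoint free, since \(|D|\) is basepoint free and so is every positive multiple of it. The output \(y_d\in|dD|\) then has splitting profile \(\bigcup_{i=1}^{d}\spp^0\) at \(v\), which completes the induction.

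The one remaining hypothesis of Lemma~\ref{lem:UnionOfRealizedIsRealized} is \(x^{(1)}\ne x^{(2)}\), and this is the main point to check. For \(d\ge3\) it is automatic because the degrees differ: \(\deg y_{d-1}=(d-1)\deg D\ne\deg D\). For \(d=2\), I would use Lemma~\ref{lem:SplittingProfileRealizedByManyPoints}. Since \(|x_v|=|D|\) is basepoint free, it gives infinitely many closed points \(y\sim x_v\) with the same splitting profile at \(v\). Choosing \(x^{(2)}\) among these with \(x^{(2)}\ne x^{(1)}=x_v\) settles this case.

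One caveat concerns the proof of Lemma~\ref{lem:UnionOfRealizedIsRealized} itself. It uses that \(f^*0=x^{(1)}+x^{(2)}\) is étale, which needs the two divisors to have disjoint support as well as to be distinct. For distinct closed points this holds, because the supports of distinct closed points are disjoint.
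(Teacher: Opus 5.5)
Your proof is correct and follows the same route as the paper: fix one of the infinitely many places \(v\), use Lemma~\ref{lem:SplittingProfileRealizedByManyPoints} to get distinct points of \(|D|\) realizing \(\spp^0\) at \(v\), and iterate Lemma~\ref{lem:UnionOfRealizedIsRealized}. The one difference is minor: the paper picks \(d\) pairwise distinct points up front, whereas you note that distinctness is only needed at the first step, since the degrees differ afterwards, and you explicitly check that \(|dD|\) is basepoint free.
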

\begin{proof}
    Let \(v\) be one of the (infinitely many) places for which there exists a point \(x\in |D|\) with \(\spp_v(\kk(x)/k) = \spp^0\).  By assumption \(|x|\) is basepoint free.  Hence, by Lemma~\ref{lem:SplittingProfileRealizedByManyPoints}, there exist distinct points \(y_1 \neq \cdots \neq y_d\) such that \(y_i \in |D|\) and \(\spp_v(\kk(y_i)/k) = \spp_v(\kk(x)/k) = \spp^0\).  We now iteratively apply Lemma~\ref{lem:UnionOfRealizedIsRealized}.
\end{proof}

\begin{remark}
    The hypothesis that \(|D|\) is basepoint free in Proposition~\ref{prop:DuplicationsofLikelyAreLikely} is not very restrictive: If \(\spp^0\) is likely in \(|D|\) but \(|D|\) is not basepoint free, then, since \(|D|\) necessarily contains a closed point \(x\), we must have \(|D| = \{x\}\).  Hence \(\spp^0\) is unramified, and so is \(\bigcup_{i=1}^d \spp^0\) for all \(d \geq 1\).  Once \(d \geq (2 g(C) + 1)/\deg(D)\), the fact that \(\bigcup_{i=1}^d \spp^0\) is likely follows from Proposition~\ref{prop:UnramifiedOfLargeDegreeLikely} below. 
\end{remark}

\subsubsection{Sufficiently large unramified splitting profiles are likely}\label{sec:UnramifiedOfLargeDegreeLikely}

\begin{lemma}\label{lem:Sd_unram}
    Let \(C\) be a nice curve over a number field \(k\) and let \(\psi \colon C \to \PP^1\) be a degree \(d\) cover with (geometric) Galois group \(S_d\).  Let \(\spp\) be an unramified splitting profile.  For all sufficiently large \(w \in \Omega_k\), there exists a point \(t \in \PP^1(k)\) such that the fiber \(y \colonequals C_{t}\) is integral, \(\spp_w(\kk(y)/k) = \spp\), and the map \(\Spec \OO_{\kk(y)} \to \calC\) is injective on geometric points over \(w\).
\end{lemma}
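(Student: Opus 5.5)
The plan is to combine the Chebotarev density theorem, applied to the Galois closure of \(\psi\), with the spreading-out argument behind Lemma~\ref{lem:SplProfileImpliesFvPoints}. Let \(\tilde{C}\to\PP^1\) be the Galois closure of \(\psi\), a \(G\)-cover with geometric monodromy group \(S_d\).

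Write \(\spp=\{(1,f_1),\dots,(1,f_r)\}\) with \(\sum f_i = d\), and let \(\sigma\in S_d\) be a permutation of cycle type \((f_1,\dots,f_r)\). Fix a finite set \(S\) and an integral model of \(\psi\colon\calC\to\PP^1_{\OO_{k,S}}\), together with the Galois closure and branch divisor, such that:
\begin{itemize}
\item the branch divisor is étale over \(\OO_{k,S}\);
\item the reduced cover \(\psi_{\F_w}\) remains a degree \(d\) cover whose geometric monodromy group is still \(S_d\).
\end{itemize}
Such an \(S\) exists because tame ramification and the monodromy are preserved under good reduction for \(w\) outside a finite set.

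Now fix \(w\notin S\). By the Chebotarev density theorem for function fields over finite fields, or equivalently by Lang--Weil applied to a twist of \(\tilde{C}_{\F_w}\) by \(\sigma\), the number of \(\bar t\in\PP^1(\F_w)\) outside the branch locus whose Frobenius conjugacy class has cycle type \((f_i)\) is
\[
\frac{\#\mathrm{cl}(\sigma)}{d!}\,\#\F_w + O\bigl(\sqrt{\#\F_w}\bigr).
\]
The implied constant depends only on the genus of \(\tilde C\) and on \(d\). This is the main obstacle: we need uniformity in \(w\). The constant is indeed uniform, since it is bounded in terms of the genus of the good-reduction fiber, which equals the genus of the generic fiber. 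Hence for \(\#\F_w\) large there is such a \(\bar t\). A subtlety is that the arithmetic monodromy over \(\F_w\) could be smaller than \(S_d\) only through a constant-field extension. This cannot happen here, because the geometric group is already all of \(S_d\), so the arithmetic group equals the geometric group.

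Over such an unramified \(\bar t\), the fiber \(\psi_{\F_w}^{-1}(\bar t)\) consists of distinct points of degrees \(f_1,\dots,f_r\), and it is étale. Now choose any \(t_0\in\PP^1(k)\) reducing to \(\bar t\). Hensel's lemma then shows that \(\kk(C_{t_0})\otimes k_w\) is a product of unramified extensions of degrees \(f_i\). The same holds for every \(t\) in the \(w\)-adic open neighbourhood \(\{t: t\equiv \bar t \bmod w\}\), because the integral fiber is finite étale over this residue disc.

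Apply the extension of Hilbert irreducibility (\cite{BCLV}*{Lemma 5.2}) to find \(t\) in this neighbourhood with \(C_t\) integral. Then \(y = C_t\) satisfies \(\spp_w(\kk(y)/k)=\spp\). Finally, the map \(\Spec\OO_{\kk(y)}\to\calC\) over \(w\) factors through the reduced fiber \(\psi_{\F_w}^{-1}(\bar t)\). That fiber is étale with exactly \(r\) points of degrees \(f_i\), and these match the primes of \(\kk(y)\) above \(w\) with the same residue degrees. Hence the map is injective on geometric points over \(w\).
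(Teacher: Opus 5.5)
Your proposal is correct and is essentially the paper's proof: good reduction of \(\psi\) with geometric monodromy still \(S_d\) for large \(w\), then Chebotarev over \(\F_w\) to find an unbranched \(\bar t\in\PP^1(\F_w)\) whose fiber has residue degrees \(\bff(\spp)\), then lifting to the \(w\)-adic residue disc of \(\bar t\), then Hilbert irreducibility with weak approximation to make the fiber integral; injectivity on geometric points follows because the fiber over \(\bar t\) is reduced. Where the paper simply cites Lemma 5.1 and Theorem 1.1 of BCLV for the good-reduction and local-splitting steps, you supply the uniform Lang--Weil estimate and the Hensel/\'etaleness argument directly, which is exactly what those results give in this unramified setting (one small slip: a constant-field extension would make the arithmetic monodromy larger than the geometric one, not smaller, though your conclusion that both equal \(S_d\) is right).
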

\begin{proof}
    Write \(\bff(\spp) = (f_1, \dots, f_r)\).
    For \(w\) sufficiently large, the map \(\psi\) has good reduction at \(w\) and the special fiber \(\psi_{\F_w} \colon C_{\F_w} \to \PP^1_{\F_w}\) is an \(S_d\)-cover \cite{BCLV}*{Lemma 5.1}.  By the Chebotarev density theorem, there exists a point \(\bar{t}_w \in \PP^1(\F_w)\) such that the fiber \(C_{\bar{t}_w}\) consists of points \(\bar{x}_1, \dots, \bar{x}_{r}\) with \(\deg(\bar{x}_i) = f_i\).  In particular, \(e(\bar{x}_i/\bar{t}_w) = 1\).  Let \(t_w \in \PP^1(k_w)\) specialize to \(\bar{t}_w\). By \cite{BCLV}*{Theorem 1.1}, the field \(\kk((C_{k_w})_{t_w})\) is a product of \(r\) unramified extensions of \(k_w\) of degrees \(f_1, \dots, f_r\).  By Hilbert's irreducibility theorem combined with weak approximation (see \cite{BCLV}*{Lemma 5.2}), there exists a point \(t \in \PP^1(k)\) reducing to \(\bar{t}_w\) such that \(y \colonequals C_t\) is integral and \(\kk(y)\otimes_k k_w \simeq \kk((C_{k_w})_{t_w})\).  In particular, \(\spp_w(\kk(y)/k) = \spp\), and since the fiber \(C_{\bar{t}_w}\) is reduced, the map \(\Spec \OO_{\kk(y)} \to \calC\) is injective on geometric points over \(w\).
\end{proof}

\begin{prop}\label{prop:UnramifiedOfLargeDegreeLikely}
    Let \(C\) be a nice curve over a number field \(k\), let \(D\) be a very ample divisor. Then for every  unramified splitting profile \(\spp\) with \(\deg(\spp) = \deg(D)\) the set
    \[
    \{v\in \Omega_k :  \exists y\in {C}, y\sim D, \Spec\OO_{\kk(y)} \to \calC \text{ inj.~on geom.~pts at \(v\), } \spp_v(\kk(y)/k) = \spp\}
    \]
    contains all sufficiently large primes.
\end{prop}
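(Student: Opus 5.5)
The plan is to build a generic projection \(\psi\colon C\to\PP^1\) of degree \(\deg(D)\) from the very ample linear system \(|D|\), so that its fibers lie in \(|D|\). Then Lemma~\ref{lem:Sd_unram} applies directly. First, take \(C\hookrightarrow \PP^n_k\) via \(|D|\) with \(n = h^0(D)-1\); if \(n=1\) then \(C\simeq\PP^1\) and the statement is immediate from the \(\PP^1\) discussion (Remark~\ref{rmks:Likely}\ref{part:likelyP1}, together with Lemma~\ref{lem:Sd_unram} for \(\psi=\mathrm{id}\) in degree \(1\)). Otherwise, project from a general \(k\)-rational codimension-\(2\) linear subspace \(\Lambda\subset\PP^n\) disjoint from \(C\). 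The resulting morphism \(\psi_\Lambda\colon C\to\PP^1\) has fibers equal to the hyperplane sections of \(C\) through \(\Lambda\). In particular every fiber is an effective divisor linearly equivalent to \(D\), and \(\deg\psi_\Lambda=\deg D\).

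The key step, and the main obstacle, is to ensure that \(\psi_\Lambda\) has geometric monodromy group \(S_d\) with \(d=\deg D\). Here I would invoke the classical uniform position / monodromy theorem (Harris): for an irreducible nondegenerate curve in characteristic \(0\), the monodromy of the general hyperplane section is the full symmetric group. Concretely, the incidence correspondence \(\{(H,p): p\in C\cap H\}\) over the dual projective space has monodromy \(S_d\). Restricting to a general pencil of hyperplanes, i.e.\ a general \(\Lambda\), preserves this monodromy by a Bertini/Lefschetz-type argument. The locus of \(\Lambda\) giving full monodromy contains a dense Zariski open subset of the Grassmannian, so it contains a \(k\)-rational point, and we may fix such a \(\Lambda\) defined over \(k\). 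Since the statement concerns geometric monodromy, it suffices to know this over \(\bar k\). If one prefers to avoid citing Harris, an alternative is to show that the monodromy is doubly transitive (from irreducibility of \(C\times_{\PP^1}C\) minus the diagonal) and contains a transposition (from a simple tangent hyperplane, which exists in characteristic \(0\) for a general pencil), which forces \(S_d\).

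With \(\psi=\psi_\Lambda\) in hand, Lemma~\ref{lem:Sd_unram} gives, for every unramified \(\spp\) of degree \(d\) and all sufficiently large \(w\), a point \(t\in\PP^1(k)\) whose fiber \(y=C_t\) is integral with \(\spp_w(\kk(y)/k)=\spp\) and \(\Spec\OO_{\kk(y)}\to\calC\) injective on geometric points over \(w\). Since \(y\) is a fiber of \(\psi\), we have \(y\sim D\). This shows that the displayed set contains all sufficiently large primes. The threshold depends on \(\psi\), but \(\psi\) is fixed independently of \(\spp\), and there are only finitely many unramified \(\spp\) of degree \(d\), so one may even take the threshold uniform in \(\spp\).
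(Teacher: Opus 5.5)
Your proposal is correct and follows essentially the paper's route: full $S_d$ monodromy of the incidence correspondence over $|D|$ (the ACGH/Harris lemma), restriction to a general $k$-rational pencil that preserves it (the paper makes your ``Bertini/Lefschetz-type'' step precise by applying Bertini irreducibility to the $d$-fold fiber product of $I_U$ minus the big diagonal), and then Lemma~\ref{lem:Sd_unram}. Your projection $C\to\PP^1$ from a codimension-$2$ base locus $\Lambda$ disjoint from $C$ is the same object as the paper's $I_L\to L$, since the projection $I_L\to C$ is then an isomorphism; working with $C\to\PP^1$ directly slightly streamlines the last step, because the injectivity conclusion of Lemma~\ref{lem:Sd_unram} is then already a statement about $\calC$.
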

\begin{proof}
    Since \(|D|\) is assumed to be very ample, the incidence correspondence \(I\subset C\times |D|\) is a {geometric} \(S_d\)-cover of \(|D|\)~\cite{ACGH}*{Lemma, Page 111}. Let \(U \subset |D|\) be such that \(I_U \to U\) is \'etale.  Let \(\tilde{I}_U\) denote the \(d\)-fold fiber product of \(I_U\) with itself over \(U\), minus the big diagonal.  Since \(I \to |D|\) is a geometric \(S_d\)-cover, \(\tilde{I}_U\) is geometrically irreducible.  By Bertini's irreducibility theorem applied to \(\tilde{I}_U \to U \hookrightarrow |D|\), there is a dense open set of lines \(L \subset |D|\) such that \(\tilde{I}_U|_{U \cap L} \to U \cap L\) remains irreducible.  Hence, we can find a \(k\)-rational line in \(L\subset |D|\) such that \(I_L\to L\) remains an \(S_d\)-cover. 
    Away from a finite set \(S\) of places, there is an integral inclusion \(\calI_{L} \hookrightarrow \calC \times \PP^1_{\OO_{k,S}}\). 
    Applying Lemma~\ref{lem:Sd_unram} to the map \(I_L \to L\simeq \PP^1\) we conclude that, for every sufficiently large prime \(w\),
    there exists a \(t\in L(k)\), such that \(I_t\) is integral and has splitting profile \(\spp\) at \(w\) and \(\Spec \OO_{\kk(I_t), S} \to \calI_L\) is injective on geometric points over \(w\).
    Then the projection of \(I_t\) onto \(C\) is a point \(y\in C\) with \(\deg(y) = \deg(I_t)\), hence \(y\simeq I_t\) as \(k\)-schemes. In particular, \(y\) also has splitting profile \(\spp\) at \(w\) and the composition \(\Spec \OO_{\kk(y)} \to \calI_t \hookrightarrow \calC \times t\) is injective on geometric points over \(w\).
\end{proof}

\subsubsection{Proof of Proposition~\ref{prop:IntroOnEAllProfilesLikely}}\label{sec:ProofOfIntroOnE}
The statement is trivial when \(\deg(\spp) = 1\). If \(\deg(\spp) = 2\), then \(\phi_{|2x|}\colon E \to \PP^1\) is a double cover ramified at \(\infty\). Thus, for all but finitely many places \(v\), there exists \(t\in \PP^{1}(k)\), \(v\)-adically close to \(\infty\) such that \(E_t\) is integral and \(\kk(E_t)\) is (totally) ramified at \(v\). 
By Lemma~\ref{lem:Sd_unram}, for all but finitely many places \(v\), there exists \(t_1, t_2\in \PP^{1}(k)\) such that \(E_{t_i}\) is integral, \(\kk(E_{t_1})\) is split at \(v\), and \(\kk(E_{t_2})\) is inert at \(v\) and the corresponding maps \(\Spec \OO_{\kk(E_{t_i})} \to \calE\) are injective on geometric points over \(v\). 
Thus Proposition~\ref{prop:IntroOnEAllProfilesLikely} holds for all degree \(2\) splitting profiles, and, Lemma~\ref{lem:ScalarMultOfRealizedIsRealized} extends this to all multiples of degree \(2\) splitting profiles.

Assume that all ramification degrees in \(\spp\) are equal, i.e., that \(\be = (d, d, \dots, d)\) for some fixed positive integer. Thus, we may think of \(\spp\) as a multiple of an unramified profile \(\spp'\), i.e., 
\[
    \bff(\spp) = \bff(\spp'), \quad, \be(\spp') = (1,\dots, 1), \quad \be(\spp) = d\be(\spp')
\]
If \(\deg(\spp')\geq 3\), then \(\deg(\spp')x\) is very ample and so Proposition~\ref{prop:IntroOnEAllProfilesLikely} follows from Proposition~\ref{prop:UnramifiedOfLargeDegreeLikely} and Lemma~\ref{lem:ScalarMultOfRealizedIsRealized}. If \(\deg(\spp') = 2\), then the argument in the previous paragraph applies. If \(\deg(\spp') = 1\) (i.e., \(\spp' = \{(1,1)\}\)), then we apply Corollary~\ref{cor:MultipleOfRealizedTotalRamIsRealized} if \(d\geq 3\) and use the argument in the first paragraph if \(d\leq {2}\).

If there are at least two distinct ramification degrees in \(\spp\), then we may write the profile as a disjoint union of profiles \(\spp^{(1)}, \dots, \spp^{(n)}\) each with a fixed ramification degrees. By the argument in the previous paragraph, for every sufficiently large prime \(v\), we have a point \(y_i\in E\) such that \(\kk(y_i)/k\) has splitting profile \(\spp^{(i)}\) at \(v\). Since \(\spp^{(i)}\neq\spp^{(j)}\) for \(i\neq j\), \(y_i\neq y_j\). Thus we may apply Lemma~\ref{lem:UnionOfRealizedIsRealized} iteratively to obtain a point \(y\in E\) with splitting profile \(\spp = \spp^{(1)}\cup \dots \cup \spp^{(n)}\) at \(v\).  \qed

%%%%%%%%%%%%%%%%%%%%%%%%%%%%%%%%%%%%%%%%%%%%%%%%%%%%%%%%%%%%%%%%%%%%%%%%%%%%%%%%%%%%%%%%%%%%%%%%%%%%%%%%%%
\section{Sets of places realizing a given splitting profile}\label{sec:LikelyPositiveDensity} 
The goal of this section is to prove Corollary~\ref{cor:MaximalLikelyIsPositiveDensity} (which implies Theorem~\ref{thm:IntroLikelyFrobenian}), i.e., to understand the density of the set of places realizing a maximal likely splitting profile. To do so, we need to understand how the converse of Lemma~\ref{lem:SymIntersectionEmptyImpliesUnlikely} can fail.

    \begin{defn}
        Let \(H\subset \Pic^0_C\) be an abelian subvariety and \(D\) a divisor of positive degree. Fix a splitting profile \(\spp\) with \(\deg(D)=\deg(\spp)\).
        Define \(Y_{\spp, D, H} \subset \Sym^{\bff(\spp)}_C\) to be the preimage of \(H + [D]\) under the composition 
        \[
        \Sym^{\bff(\spp)}_C\xrightarrow{\textup{Abel--Jacobi}}\Pic^{\bff(\spp)}\xrightarrow{[\be(\spp)]}\Pic^{\deg(\spp)}_C
        \]
    \end{defn}    
    Lemma~\ref{lem:SymIntersectionEmptyImpliesUnlikely} states that if a splitting profile \(\spp\) is likely in \(|D + H|\), then \(Y_{\spp, D, H}\neq \emptyset\). However, the converse does fail. For example, if \(x,y\in C(k)\) are such that \(|x+y| = \{x+y\}\) and \(\spp = \{(1,2)\}\), then \(Y_{\spp, [x] + [y], 0}\neq\emptyset\), but since \(|x + y|\) consists of the single reducible divisor \(x + y\), the splitting profile \(\spp\) is not realized by a single place. The issue is that the point of \(Y_{\spp, [x] + [y], 0}\) really arises from the profile \(\spp' := \{(1,1),(1,1)\}\). To explore this further, we fix some notation.

\begin{remark}
    The proof of Theorem~\ref{thm:IntroLikelyFrobenian} will be in the case when $H=0$.
\end{remark}

\begin{definition}\label{defn:Covering}
    The splitting profile \(\spp'\) \defi{covers} the splitting profile \(\spp\), denoted \(\spp' \gtrdot \spp\), if for some \(e, e', f, f' \in \Z_{>0}\) we have one of the following conditions
    \begin{enumerate}
    \item \label{it:Coveringfs}
    \(\spp' \smallsetminus (\spp\cap \spp') = \{(e, f), (e, f')\}, \quad \text{and} \qquad \spp \smallsetminus (\spp\cap \spp') = \{(e, f+f')\},\) or
    \item \(\spp' \smallsetminus (\spp\cap \spp') = \{(e + e', f)\},\phantom{i} \qquad \text{and} \qquad \spp \smallsetminus (\spp\cap \spp') = \{(e, f), (e', f)\}.\) \label{it:Coveringes}
    \end{enumerate}
    The covering relation \(\gtrdot\) generates a partial order \(\geq\) on the set of splitting profiles: we say that \(\spp'> \spp\) if there is a sequence of covering relations, \(\spp':=\spp^{(n)}\gtrdot \cdots \gtrdot \spp^{(1)}=\spp\). If \(\spp' \geq \spp\), then \(\deg(\spp') = \deg(\spp)\), \(\sum_{e_i\in \mathbf{e}(\spp')} e_i \geq \sum_{e_i\in \mathbf{e}(\spp)}e_i\) and \(\sum_{f_i\in \mathbf{f}(\spp')} f_i \leq \sum_{f_i\in \mathbf{f}(\spp)}f_i\). Additionally, any splitting profile \(\spp\) sits between a totally ramified splitting profile and a profile with maximum residual degree:
    \[
    \{(\deg(\spp),1)\} \geq \spp \geq \{(1, \deg(\spp))\}.
    \]
\end{definition}

\begin{lemma}\label{lem:MapOfYPD}
    Let \(\spp'\gtrdot\spp\) be splitting profiles. Then there is a commutative diagram
    \[
    \begin{tikzcd}
    \Sym^{\bff(\spp')}_C\arrow[rr, "\textup{Abel--Jacobi}"] \arrow[d]& & \Pic_C^{\bff(\spp')}\arrow[rd, "{[\be(\spp')]}"] \arrow[d]\\
    \Sym^{\bff(\spp)}_C\arrow[rr, "\textup{Abel--Jacobi}"] & & \Pic_C^{\bff(\spp)}\arrow[r, "{[\be(\spp)]}"] & \Pic^{\deg(\spp)}_C
    \end{tikzcd}
    \]
    If~\eqref{it:Coveringfs} holds, then the vertical maps are given by partial summation and the leftmost map is finite surjective. If~\eqref{it:Coveringes} holds, then the vertical maps are given by a partial diagonal and are closed immersions. In particular, for every abelian subvariety \(H\subset \Pic^0_C\) and every divisor \(D\) of positive degree with \(\deg(D)= \deg(\spp)\), we have a  finite morphism
    \[
        Y_{\spp', D, H} \to Y_{\spp, D, H}.
    \]
\end{lemma}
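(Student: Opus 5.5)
We construct the vertical maps explicitly in each case and check commutativity directly. Throughout, order the tuples so that the common part \(\spp\cap\spp'\) occupies the same coordinates in \(\bff(\spp),\be(\spp)\) and \(\bff(\spp'),\be(\spp')\). On those coordinates every vertical map is the identity. We then only need to handle the one or two remaining coordinates.

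\emph{Case~\eqref{it:Coveringfs}.} Here \(\spp'\) carries the coordinates \((e,f),(e,f')\) and \(\spp\) carries the single coordinate \((e,f+f')\). Define the left vertical map by the addition morphism \(\Sym^f_C\times\Sym^{f'}_C\to\Sym^{f+f'}_C\), \((D_1,D_2)\mapsto D_1+D_2\). Define the middle vertical map by the group-law addition \(\Pic^f_C\times\Pic^{f'}_C\to\Pic^{f+f'}_C\). The square commutes because Abel--Jacobi is additive. The triangle commutes because \(e[D_1]+e[D_2]=e[D_1+D_2]\), so \([\be(\spp')]\) factors through \([\be(\spp)]\).

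For the finiteness and surjectivity claim, we identify \(\Sym^f_C\times\Sym^{f'}_C\to\Sym^{f+f'}_C\) with the quotient map \(C^{f+f'}/(S_f\times S_{f'})\to C^{f+f'}/S_{f+f'}\). This is the quotient by a finite group followed by a further finite quotient, so it is finite and surjective. Taking the product with identities on the other factors preserves these properties.

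\emph{Case~\eqref{it:Coveringes}.} Here \(\spp'\) carries \((e+e',f)\) and \(\spp\) carries \((e,f),(e',f)\). Define the left vertical map by the diagonal \(\Sym^f_C\to\Sym^f_C\times\Sym^f_C\), \(D\mapsto(D,D)\). Define the middle vertical map by the diagonal on \(\Pic^f_C\). Both are closed immersions, since the schemes involved are separated. The square commutes trivially. The triangle commutes because \(e[D]+e'[D]=(e+e')[D]\).

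Finally, the induced map \(Y_{\spp',D,H}\to Y_{\spp,D,H}\) exists because both \(Y\)'s are preimages of \(H+[D]\) under the two composites to \(\Pic^{\deg(\spp)}_C\). The outer diagram commutes, so the left vertical map sends \(Y_{\spp',D,H}\) into \(Y_{\spp,D,H}\). Moreover, \(Y_{\spp',D,H}\) is exactly the preimage of \(Y_{\spp,D,H}\) under this map. It therefore arises as a base change of a finite morphism, and both finite surjective maps and closed immersions are finite.

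The only point needing any care is the finiteness of the addition map on symmetric products, and the quotient description above handles it. Everything else is formal.
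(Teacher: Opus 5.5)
Your proof is correct and is essentially the paper's argument: you reduce to the case \(\spp\cap\spp'=\emptyset\) and use the addition and diagonal diagrams, and you deduce finiteness of \(Y_{\spp',D,H}\to Y_{\spp,D,H}\) from the fact that both are complete preimages of \([D]+H\), so that map is a base change of the left vertical map. One wording fix: \(S_f\times S_{f'}\) is generally not normal in \(S_{f+f'}\), so \(\Sym^f_C\times\Sym^{f'}_C\to\Sym^{f+f'}_C\) is not literally a further group quotient; it is still finite and surjective, since it is proper and quasi-finite (equivalently, the finite surjection \(C^{f+f'}\to\Sym^{f+f'}_C\) factors through it).
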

\begin{proof}
    It suffices to prove this in the case that $\spp\cap \spp' = \emptyset$. In that case, the first three statements of the lemma follow from the two  diagrams (\eqref{it:Coveringfs} on the left and~\eqref{it:Coveringes} on the right):
        \[
    \begin{tikzcd}
    \Sym^{(f,f')}_C\arrow[r] \arrow[d, "+"]&  \Pic_C^{(f,f')}\arrow[rd, "{[(e,e)]}"] \arrow[d, "+"]\\
    \Sym^{f + f'}_C\arrow[r] & \Pic^{f+f'}_C\arrow[r, "{[e]}"] & \Pic_C^{\deg(\spp)}
    \end{tikzcd}
    \quad
        \begin{tikzcd}
    \Sym^{f}_C\arrow[r] \arrow[d, "\Delta"]&  \Pic^{f}_C\arrow[rd, "{[e+e']}"] \arrow[d, "\Delta"]\\
    \Sym^{(f,f)}_C\arrow[r] & \Pic^{(f,f)}_C\arrow[r, "{[(e,{e'})]}"] & \Pic^{\deg(\spp)}_C
    \end{tikzcd}
    \]
    which evidently have the desired properties. Since \(Y_{\spp', D,H}\) and \(Y_{\spp, D, H}\) are both complete preimages of \([D]+H\), the last claim follows from the previous ones.
\end{proof}

With this framework, we can determine the extent to which a converse of Lemma~\ref{lem:SymIntersectionEmptyImpliesUnlikely} holds.
\begin{prop}\label{prop:SymIntersectionNonemptyImpliesARefinementIsLikely}
    Let \(D\) be a very ample divisor and let \(\spp\) be a splitting profile with \(\deg(D) = \deg(\spp)\). If \(Y_{\spp, D, \{0\}}\neq\emptyset\) then there exists a splitting profile  \(\spp'\geq \spp\) such that \(\spp'\) is likely in \(|D|\).
\end{prop}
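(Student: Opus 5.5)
Given a point of \(Y_{\spp, D, \{0\}}\), I will read off a geometric effective divisor in \(|D|\) and show that the profile it "really" carries is some \(\spp'\geq\spp\). Then I will realize \(\spp'\) at infinitely many places using Theorem~\ref{thm:Reconstruction}.

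First, pick a closed point of \(Y_{\spp,D,\{0\}}\) and let \(L\) be its residue field, a number field. Concretely, it is a tuple \((E_1,\dots,E_r)\) of effective divisors defined over \(L\), with \(\deg E_i = f_i\) and \(\sum_i e_i E_i \sim D\). Over \(L\), decompose each \(E_i\) into closed points and collect multiplicities. If some \(E_i\) contains a point with multiplicity \(>1\), or two \(E_i, E_j\) share a point, we merge ramification degrees; this is the covering relation~\eqref{it:Coveringes}. If \(E_i\) is a sum of several distinct points, we split residual degrees; this is~\eqref{it:Coveringfs}. The result is a divisor \(\sum_j e'_j \bar y_j\sim D\) with the \(\bar y_j\) distinct closed points of \(C_L\) of degrees \(f'_j\), defining a profile \(\spp'\). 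Using Lemma~\ref{lem:MapOfYPD}, one checks that each merge or split step moves up in the partial order, so \(\spp'\geq\spp\). The bookkeeping needs care, since the point degrees should be measured over the residue field at \(v\) rather than over \(L\). I would therefore define \(\spp'\) only after choosing \(v\), as follows.

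Next, choose infinitely many places. By Chebotarev, there are infinitely many places \(v\) of \(k\) with the following properties: \(v\) has a degree-one place \(w\) of \(L\) above it; \(C\) has good reduction at \(v\); and the reductions of the distinct geometric points in the supports of the \(E_i\) stay distinct mod \(w\), which excludes finitely many places. For such \(v\), reduction gives \(\F_v\)-rational effective divisors \(\bar E_i\) with \(\sum e_i\bar E_i\sim D_{\F_v}\). Decomposing into closed points of \(C_{\F_v}\) gives distinct points \(\bar x_j\) with multiplicities \(e'_j\) and degrees \(f'_j\), and hence a profile \(\spp'_v\geq\spp\) by the refinement above.

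Then apply Theorem~\ref{thm:Reconstruction}. Since \(D\) is very ample, for all but finitely many \(v\) we have that \(D_{\F_v}\) is very ample and \(h^0(C,D)=h^0(C_{\F_v},D_{\F_v})\), by semicontinuity and the spreading out of the embedding. Also \(r(\spp'_v)\leq\deg D<\#\F_v\) for large \(v\). So Theorem~\ref{thm:Reconstruction} produces \(x\sim D\) with \(\spp_v(\kk(x)/k)=\spp'_v\). There are only finitely many profiles of degree \(\deg D\), so by pigeonhole a single \(\spp'\geq\spp\) occurs for infinitely many \(v\). That \(\spp'\) is therefore likely in \(|D|\).

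The main obstacle is the step that turns reduction mod \(v\) into a profile comparable to \(\spp\). It requires verifying that decomposing each \(\bar E_i\) over \(\F_v\) (splitting residual degrees) and identifying coincident points (adding ramification degrees) is always a chain of coverings in the direction \(\spp'\geq\spp\). In particular, the merge step must combine points of equal degree \(f\), as~\eqref{it:Coveringes} requires. I would handle this by first splitting every \(E_i\) completely into distinct closed points, which uses only~\eqref{it:Coveringfs} and yields points of possibly varying degrees. Only then would I merge coincident points, which necessarily have the same degree, so~\eqref{it:Coveringes} applies.
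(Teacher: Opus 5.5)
Your proof is correct. It differs from the paper's mainly in where the refinement of $\spp$ happens.

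The paper refines over $L$ first. It replaces $\spp$ by a larger profile so that the divisors $Z_i$ are integral and pairwise distinct over $L$. It then restricts to places $v$ split completely in $L$ and in every $\kk(Z_i)/L$, discarding the finitely many $v$ where some $Z_i \bmod v$ is nonreduced or two reductions meet. This yields the single explicit profile $\spp'=\bigcup_{(e,f)\in\spp}\{(e,1)\}^{f}$, with all residual degrees $1$, realized at a positive-density set of places and with no pigeonhole needed.

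You reduce first and refine over $\F_v$, absorbing all collisions and multiplicities into a chain of coverings from Definition~\ref{defn:Covering}. As a result $\spp'_v$ depends on $v$, which is why you need the pigeonhole over the finitely many profiles of degree $\deg D$.

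What each route buys:
\begin{itemize}
\item The paper's route gives an explicit $\spp'$ and a positive-density set of places.
\item Your route gives a non-explicit $\spp'$ but is more robust. Your condition that the geometric points of the $E_i$ stay distinct mod $w$ is not actually used, since a collision just produces another profile $\geq\spp$.
\item Your reasoning also gives a pointwise statement: outside a finite set of places, every $\F_v$-point of $Y_{\spp,D,\{0\}}$ lifts to a point of $|D|$ realizing some $\spp'\geq\spp$ at $v$. This is close to the argument the paper later uses in the proof of Corollary~\ref{cor:MaximalLikelyIsPositiveDensity}.
\end{itemize}

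One wording fix: if some $\bar E_i$ contains a point $\bar x$ with multiplicity $n>1$, the step using only relation~\eqref{it:Coveringfs} produces $n$ copies of $(e_i,\deg\bar x)$, not ``distinct'' points. Those copies are then merged by relation~\eqref{it:Coveringes}, together with the coincidences across different $\bar E_i$.
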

The converse of this proposition holds by Lemmas~\ref{lem:SymIntersectionEmptyImpliesUnlikely} and~\ref{lem:MapOfYPD}. Before proving the proposition, we first record the following corollary, which easily implies Theorem~\ref{thm:IntroLikelyFrobenian}.
\begin{cor}\label{cor:MaximalLikelyIsPositiveDensity}
    Let \(D\) be a very ample divisor on a nice curve \(C/k\) and let \(\spp\) be a splitting profile with \(\deg(D)=\deg(\spp)\). Assume that \(\spp\) is maximal among splitting profiles that are likely in \(|D|\), i.e., assume that \(\spp\) is likely in \(|D|\), and that for all \(\spp'>\spp\), the profile \(\spp'\) is unlikely in \(|D|\). Then the set
    \begin{equation}
    \calT_{\spp, D}:= \left\{v \in \Omega_k  :  \exists x\in C, \; x\sim D
            \textup{ with }\spp_v(\kk(x)/k) = \spp\right\}
    \end{equation}
    is infinite and almost equal to the set \(\{v: Y_{\spp, D, \{0\}}(\F_v)\neq\emptyset\}\). Furthermore, \(\calT_{\spp, D}\) is Frobenian with positive density; it has density \(1\) if \(Y_{\spp, D, \{0\}}\) is \defi{pseudo-split}.
\end{cor}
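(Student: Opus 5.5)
The plan is to compare \(\calT_{\spp,D}\) with the set \(\calY \colonequals \{v : Y_{\spp,D,\{0\}}(\F_v)\neq\emptyset\}\) in both directions, each up to a finite set of places, and then read off the density statements from the Chebotarev density theorem applied to the \(k\)-variety \(Y \colonequals Y_{\spp,D,\{0\}}\).

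\emph{The inclusion \(\calT_{\spp,D}\subset\calY\) up to finitely many places.} This follows from Lemma~\ref{lem:SplProfileImpliesFvPoints} with \(H=0\): for a place of good reduction \(v\) and a point \(x\sim D\) with \(\spp_v(\kk(x)/k)=\spp\), the lemma produces an \(\F_v\)-point of \(\Sym^{\bff}_{C_{\F_v}}\) mapping to \([D_{\F_v}]\) under \([\be]\circ\)Abel--Jacobi. That is an \(\F_v\)-point of \(Y\), after spreading \(Y\) out over \(\OO_{k,S}\).

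\emph{The reverse inclusion, which I expect to be the main obstacle.} Given \(v\) with a point \(\bar y\in Y(\F_v)\), write \(\bar y=(\bar D_1,\dots,\bar D_r)\) with \(\bar D_i\) effective of degree \(f_i\). If every \(\bar D_i\) is a single closed point \(\bar x_i\) of degree exactly \(f_i\) and the \(\bar x_i\) are distinct, then Theorem~\ref{thm:Reconstruction} gives \(v\in\calT_{\spp,D}\). Here \(h^0(C,D)=h^0(C_{\F_v},D_{\F_v})\) and very ampleness hold for all but finitely many \(v\) by semicontinuity and spreading out, and \(r<\#\F_v\) for large \(v\). Otherwise \(\bar y\) is degenerate: either some \(\bar D_i\) is not a single point of degree \(f_i\), or two of the supports coincide. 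In that case, regrouping the divisor \(\sum e_i\bar D_i\) gives distinct closed points \(\bar z_j\) with multiplicities \(e'_j\) and degrees \(f'_j\). I claim the profile \(\spp'=\{(e'_j,f'_j)\}\) satisfies \(\spp'>\spp\): splitting \(\bar D_i\) into its points is the reverse of an operation of type~\eqref{it:Coveringfs}, and merging coincident points is an operation of type~\eqref{it:Coveringes}. Since \(\sum_j e'_j\bar z_j\sim D_{\F_v}\) with the \(\bar z_j\) genuine distinct points, Theorem~\ref{thm:Reconstruction} shows \(\spp'\) is realized at \(v\). The degenerate \(\F_v\)-points therefore lie in the images of the finitely many \(Y_{\spp',D,0}\to Y\) with \(\spp'>\spp\) (Lemma~\ref{lem:MapOfYPD}), and each such \(\spp'\) is realized at every \(v\) where that image has an \(\F_v\)-point. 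By maximality each \(\spp'\) is unlikely, so \(Y_{\spp'}\) has \(\F_v\)-points for only finitely many \(v\). Hence for all but finitely many \(v\in\calY\) every \(\F_v\)-point of \(Y\) is nondegenerate, and such \(v\) lie in \(\calT_{\spp,D}\). The two sets are thus almost equal.

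\emph{The density statements.} Since \(\spp\) is likely, \(\calT_{\spp,D}\) is infinite by definition. For the Frobenian property, I would use the standard fact that for a variety \(Y/k\), the set of \(v\) with \(Y(\F_v)\neq\emptyset\) is Frobenian up to a finite set. This comes from Lang--Weil applied to geometrically irreducible components: for large \(v\), \(Y(\F_v)\neq\emptyset\) if and only if some irreducible component of \(Y_{\F_v}\) defined over \(\F_v\) is geometrically irreducible, possibly after stratifying by dimension and induction. That condition is governed by Frobenius acting on the finite set of geometric components, which is a Chebotarev condition in a finite Galois extension. This set is infinite, so it has positive density. If \(Y\) is pseudo-split, every Frobenius fixes some geometrically irreducible component of multiplicity one, so the density is \(1\).
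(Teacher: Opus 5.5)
Your proof is correct, but you handle the degenerate \(\F_v\)-points differently from the paper.

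The paper first invokes Proposition~\ref{prop:SymIntersectionNonemptyImpliesARefinementIsLikely}, which is itself proved via Theorem~\ref{thm:Reconstruction} plus Chebotarev over the residue field of a closed point of \(Y_{\spp',D,\{0\}}\). From maximality it deduces that \(Y_{\spp',D,\{0\}}=\emptyset\) over \(k\) for every \(\spp'>\spp\). Spreading out, only finitely many \(v\) see an \(\F_v\)-point of any such \(Y_{\spp',D,\{0\}}\). At all other places every \(\F_v\)-point of \(Y_{\spp,D,\{0\}}\) consists of distinct integral \(z_i\), and Theorem~\ref{thm:Reconstruction} applies.

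You instead apply Theorem~\ref{thm:Reconstruction} directly at \(v\) to the fully regrouped divisor \(\sum_j e'_j\bar z_j\). This puts every \(v\) carrying a degenerate \(\F_v\)-point, outside the finite set where the theorem's hypotheses fail, into \(\bigcup_{\spp'>\spp}\calT_{\spp',D}\). That is a finite union of sets, each finite by maximality. Your route bypasses the proposition entirely; the paper's route buys the extra geometric fact that the schemes \(Y_{\spp',D,\{0\}}\) with \(\spp'>\spp\) are empty.

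Two small repairs are needed.

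\begin{enumerate}
\item The sentences ``each such \(\spp'\) is realized at every \(v\) where that image has an \(\F_v\)-point'' and ``so \(Y_{\spp'}\) has \(\F_v\)-points for only finitely many \(v\)'' do not follow as written. An \(\F_v\)-point of \(Y_{\spp',D,\{0\}}\) can itself be degenerate. The second statement is true, but it is exactly what the paper gets from Proposition~\ref{prop:SymIntersectionNonemptyImpliesARefinementIsLikely}, or what a downward induction on \(\geq\) would give. Simply delete both sentences: your preceding sentence already finishes the argument.
\item For the density statements, your Lang--Weil/Chebotarev sketch re-derives what the paper cites from Loughran--Smeets and Loughran--Skorobogatov--Smeets. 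The stratification there is not optional, because conjugate geometric components can meet in an \(\F_v\)-point. The criterion must be applied to strata whose geometric components are pairwise disjoint (e.g.\ smooth strata, as in the paper's footnote).
\end{enumerate}
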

\begin{remark}
    In order to extend Corollary~\ref{cor:MaximalLikelyIsPositiveDensity} to other splitting profiles, one needs to have a deep understanding of when the existence of an \(\F_v\) point on \(Y_{\spp, D, \{0\}}\) is the limit of a closed point \(x\in C\) with \(\spp > \spp_v(\kk(x)/k)\). For example, let \(C\subset \PP^2\) be the smooth plane quartic defined by \((z_0^2 + z_1^2)^2 + z_0z_2(z_1^2 + z_2^2) = 0\). The line \(z_0=0\) meets \(C\) at the single point \([0:0:1]\) with multiplicity \(4\), which shows that \(Y_{\{(4,1)\}, K_C, \{0\}}(\F_v)\neq\emptyset\) for all \(v\). Additionally, the line \(z_2=0\) meets \(C\) at the degree \(2\) point \(V(z_0^2 + z_1^2, z_2)\) with multiplicity 2. Thus, for any \(v\) that splits in \(\Q(i)\), we have \(Y_{\{(2,1),(2,1)\}, K_C, \{0\}}(\F_v)\neq\emptyset\). Lemma~\ref{lem:SplProfileImpliesFvPoints} shows that these \(\F_v\)-points of \(Y_{\{(2,1),(2,1)\}, K_C, \{0\}}\) (the ones corresponding to the bitangent \(z_2=0\)) will never arise as the limit of a closed point \(x\in |K_C|\) with splitting profile \(\{(2,2)\}\). However, the \(\F_v\)-points of \(Y_{\{(4,1)\}, K_C, \{0\}}\) \emph{can} arise as such a limit. Indeed, that happens for \(v = 7\) and \(x = C\cap V(z_0 - 49z_2)\). 
\end{remark}
\begin{proof}[Proof of Theorem~\ref{thm:IntroLikelyFrobenian}]
    Since \(J_C(k)\) has rank \(0\), there are finitely many \(k\)-divisor classes \([D_1], \dots, [D_n]\) of degree \(d\), which are guaranteed to be very ample by the Riemann--Roch theorem.
    If \(\spp \colonequals \{(d,1)\}\) is likely in \(|D_i|\),
    we apply Corollary~\ref{cor:MaximalLikelyIsPositiveDensity} 
    with \(D= D_i\), 
    which is valid since there are no splitting profiles \(\spp' > \spp\). 
    If \(\spp\) is unlikely in \(|D_i|\), then \(\calT_{\spp, D_i}\) is finite, hence Frobenian.  All together,
    this implies the result since the finite union of Frobenian sets is Frobenian.
\end{proof}
\begin{proof}[Proof of Corollary~\ref{cor:MaximalLikelyIsPositiveDensity}]
The second statement follows from the first by work of Loughran--Smeets and Loughran--Skorobogatov--Smeets, specifically~\cite{LS-Fibrations}*{Lemma 3.9} and \cite{LSS-ArithmeticSurjectivity}*{Paragraph before Proposition 2.11}.\footnote{If \(Y_{P,D,\{0\}}\) is not smooth, we may write it as a finite union of smooth varieties, apply the results from~\cites{LS-Fibrations, LSS-ArithmeticSurjectivity} to each, and use that a finite union of Frobenian sets is Frobenian.} It remains to prove the first statement. Since we work with a fixed divisor \(D\) and \(H = \{0\}\), for any splitting profile \(\spp'\) with \(\deg(\spp') = \deg(D)\), we set \(Y_{\spp'} := Y_{\spp', D, \{0\}}\). 

Fix \(v\in \calT_{\spp, D}\). With only finitely many exceptions, we may apply Lemma~\ref{lem:SplProfileImpliesFvPoints} and conclude that \(Y_{\spp}(\F_v)\neq\emptyset\). 
Now let \(v\) be a nonarchimedean place such that \(Y_{\spp}(\F_v)\neq\emptyset\), in other words we have effective divisors \(z_1, \dots, z_r\) (where \(r\colonequals r(\spp)\)) such that \(\deg(z_i) = f_i\) and \(\sum_i e_i [z_i] = [D_{\F_v}]\). Let \(\spp'\) be a splitting profile with \(\spp'>\spp\). Then, by assumption,  \(\spp''\) is unlikely for all \(\spp''\geq \spp'\). Thus, by Proposition~\ref{prop:SymIntersectionNonemptyImpliesARefinementIsLikely}, \(Y_{\spp'}=\emptyset\). In particular,
\[
\bigcup_{\spp'>\spp}\left\{v\in \Omega_k : Y_{\spp'}(\F_v)\neq\emptyset\right\}
\]
is finite. Hence, away from finitely many places, the collection \((z_i)\in Y_{\spp}\) is not in the image of an \(\F_v\)-point on \(Y_{\spp'}\) for any \(\spp'>\spp\). In other words, each \(z_i\) is integral and (after possibly excluding finitely many more places) we may apply Theorem~\ref{thm:Reconstruction} to conclude that \(v\in \calT_{\spp,D}\).
\end{proof}

\begin{proof}[Proof of Proposition~\ref{prop:SymIntersectionNonemptyImpliesARefinementIsLikely}]
    If \(Y_{\spp, D, \{0\}}\neq \emptyset\), then \(Y_{\spp, D, \{0\}}\) has closed points. Fix some closed point \(y\) and set \(L\colonequals \kk(y)\), which is a finite extension of \({k}\). Thus, over \(L\), \(Y_{\spp, D, \{0\}}\) has a rational point which, by definition, corresponds to effective divisors \(Z_1, \dots, Z_{r(\spp)}\) (defined over \(L\)) where \(\deg(Z_i) = f_i\) and \(\sum_i e_iZ_i\simeq D\). We claim that after possibly replacing \(\spp\) with a splitting profile greater than it, we may assume that each \(Z_i\) is integral and \(Z_i\neq Z_j\) for all \(i\neq j\). Indeed, if \(Z_i = \sum_j n_{i,j}Z'_{i,j}\), then the effective divisors \((Z'_{i,j})_{i,j}\) give an \(L\)-point on \(Y_{\spp', D, \{0\}}\) where
    \[
    \spp' := \bigcup_{i = 1}^{r(\spp)} \bigcup_{j} \left\{(e_in_{i,j}, \deg(Z'_{i,j}))\right\}.
    \]
    By definition of the ordering, \(\spp'\geq \spp\). Further, if some \(Z'_{i,j}=Z'_{i',j'}\) for \((i,j)\neq (i',j')\), then we may replace \(\spp'\) with one that covers it using the relation in Definition~\ref{defn:Covering}\eqref{it:Coveringes}. 

    Henceforth, we assume that each \(Z_i\) is integral and set \(\spp'\colonequals \cup_{(e,f)\in \spp} \left\{(e,1)\right\}_{j=1}^f\). Now let \(v\) be any place of \(k\) that is split completely in \(L\) and split completely in \(\kk(Z_i)/L\) for all \(i\); by the Chebotarev Density Theorem there are a positive density set of such places \(v\). Further exclude the finitely many places where either \(C\) has bad reduction, where \(Z_i \bmod v\) is nonreduced for some \(i\), or where \(Z_i\bmod v, Z_j\bmod v\) have overlapping support for some \(i\neq j\). Then for any \(v\) in this positive density of places, the divisors \(Z_i\) give a \(k_v\)-point on \(Y_{\spp', D, \{0\}}\) that is not in the image of \(Y_{\spp'', D, \{0\}}\) for any \(\spp''> {\spp'}\). Furthermore, the reduction of these divisors \(Z_i\) give \(\F_v\)-points satisfying the equation~\eqref{eq:sum_eipi} from the hypothesis of Theorem~\ref{thm:Reconstruction}. Since \(D\) is very ample, after possibly further excluding another finite set of places, all of the hypotheses of Theorem~\ref{thm:Reconstruction} will be satisfied and we deduce the existence of a point \(x\in |D|\) such that \(\spp_v(\kk(x)/k) = \spp'\), as desired.
\end{proof}

%%%%%%%%%%%%%%%%%%%%%%%%%%%%%%%%%%%%%%%%%%%%%%%%%%%%%%%%%%%%%%%%%%%%%%%%%%%%%%%%
%%%%%%%%%%%%%%%%%%				Bibliography			%%%%%%%%%%%%%%%%%%%%%%%%
%%%%%%%%%%%%%%%%%%%%%%%%%%%%%%%%%%%%%%%%%%%%%%%%%%%%%%%%%%%%%%%%%%%%%%%%%%%%%%%%

\end{document}